\documentclass[12pt]{amsart}
\usepackage{amsmath,amssymb,amsthm,mathtools}
\usepackage{color}
\usepackage[OT2,T1]{fontenc}
\DeclareSymbolFont{cyrletters}{OT2}{wncyr}{m}{n}
\DeclareMathSymbol{\Sha}{\mathalpha}{cyrletters}{"58}
\usepackage{hyperref}
\hypersetup{hidelinks}

\newtheorem{theorem}{Theorem}[section]
\newtheorem{corollary}[theorem]{Corollary}
\newtheorem{lemma}[theorem]{Lemma}
\newtheorem{proposition}[theorem]{Proposition}
\newtheorem{question}{Question}
\theoremstyle{remark}
\newtheorem{remark}[theorem]{Remark}

\newcommand{\Q}{\mathbb{Q}}
\newcommand{\Z}{\mathbb{Z}}
\renewcommand{\C}{\mathbb{C}}
\newcommand{\Tam}{\mathrm{Tam}}
\newcommand{\Ht}{\mathrm{ht}}
\newcommand{\Prob}{\mathbf P}
\newcommand{\E}{\mathbf E}
\newcommand{\Var}{\mathbf{Var}}
\newcommand{\Cov}{\mathbf{Cov}}
\newcommand{\Corr}{\mathbf{Corr}}
\newcommand{\1}{\mathbf 1}
\newcommand{\NN}{\mathcal N}
\newcommand{\KK}{\mathcal K}
\newcommand{\Reals}{\operatorname{Re}}
\title[Local Factors in the BSD Conjecture: A Unified Statistical View]{Local Factors in the BSD Conjecture: A Unified Statistical View}
\author[Angdinata, Lau, Ono, Swaminathan, and Vemulapalli]{David Kurniadi Angdinata, Kenny Lau, Ken Ono, Ashvin Swaminathan, and Sameera Vemulapalli}
\email{davidang@axiommath.ai, kenny@axiommath.ai, ken@axiommath.ai, ashvin@axiommath.ai, vemulapalli@math.harvard.edu}
\dedicatory{In memory of Takashi Ono}
\subjclass[2020]{Primary 11G05, 11G07; Secondary 11N45, 11K65, 11G40}
\keywords{elliptic curves, local reduction data, Kodaira symbols, Tamagawa numbers, arithmetic statistics, Birch--Swinnerton-Dyer Conjecture}

\begin{document}
\begin{abstract}
For elliptic curves $E/\Q$ in short Weierstrass form
\[
E=E(a_4,a_6): y^2=x^3+a_4x+a_6,
\]
we derive a multivariable Euler product generating function which encodes the Tamagawa product $\Tam(E)=\prod_p c_p(E)$. Using this generating function, we compute limiting distributions, exact covariances, and moment and tail bounds for four important statistics on the Tamagawa number; for example we show that more than half of all curves in this height ordering have trivial Tamagawa product, about $42.2\%$ have exactly one prime with nontrivial local Tamagawa number, and only about $6.8\%$ have exactly two such primes. The product is obtained by specializing an Euler product indexed by local reduction data that we derive from Tate's algorithm. The results in this paper were autoformalized in Lean by AxiomProver.
\end{abstract}

\maketitle

\section{Introduction and statement of results}

We work with short Weierstrass models
\begin{equation}\label{eq:model}
E=E(a_4,a_6):\qquad y^2=x^3+a_4x+a_6,
\end{equation}
where $a_4,a_6\in\Z$ and
\[
\Delta(E)=-16(4a_4^3+27a_6^2)\neq 0.
\]
We order these curves by their naive height
\begin{equation}\label{eq:height}
\Ht(E):=\max\{4|a_4|^3,27a_6^2\},
\end{equation}
and we shall employ the corresponding counting function
\begin{equation}\label{eq:NX}
\NN(X):=\#\{(a_4,a_6)\in\Z^2:\Ht(E(a_4,a_6))\le X,\ \Delta(E(a_4,a_6))\ne0\}.
\end{equation}
Thus, $\NN(X)$ counts integral short Weierstrass models, not $\Q$-isomorphism classes.

For each prime $p$, we have the \emph{local Tamagawa number}
\begin{equation}\label{eq:Tam-local}
c_p(E)=[E(\Q_p):E_0(\Q_p)],
\end{equation}
where $E_0(\Q_p)$ is the subgroup of finite index consisting of the points whose reduction modulo $p$, computed on a $p$-minimal Weierstrass model, is a nonsingular point of the reduced curve. Using these local factors, we define the \emph{Tamagawa product}
\begin{equation}\label{eq:Tam-product}
\Tam(E):=\prod_p c_p(E).
\end{equation}
Only finitely many factors $c_p$ differ from $1$, and so this product is a positive integer.

The primary contribution is to give the distribution and correlations of several statistics associated to the Tamagawa product. For example we show that more than half of all curves in this height ordering have trivial Tamagawa product, about $42.2\%$ have exactly one prime with nontrivial local Tamagawa number, and only about $6.8\%$ have exactly two such primes. See Subsection~\ref{subsec:numerics} for more statistics of Tamagawa products derived from the theorems in this paper. 

The Tamagawa product appears on the algebraic side of the Birch--Swinnerton-Dyer conjecture \cite{BSD2}. If $r=\operatorname{rank}E(\Q)$, then BSD predicts that
\begin{equation}\label{eq:BSD}
\frac{L^{(r)}(E,1)}{r!}
=
\frac{|\Sha(E)|\,\Omega_E\,R_E\,\prod_{p\mid N_E}c_p(E)}{|E(\Q)_{\mathrm{tors}}|^2}.
\end{equation}
In \eqref{eq:BSD}, $L(E,s)$ denotes the Hasse--Weil $L$-function and $N_E$ the conductor. The remaining quantities are the Tate--Shafarevich group $\Sha(E)$, the real period $\Omega_E$, and the regulator $R_E$. The conductor $N_E$ and the Tamagawa product are both determined by the local reduction of $E$. The conductor records the type and severity of the bad primes, whereas $\Tam(E)$ records the sizes of the corresponding component groups.

Another important statistic is the proportion of elliptic curves with fixed Tamagawa product $m$. In recent work, Griffin, Tsai, and one of the authors \cite{GOT} proved, for every $m\ge 1$, a formula for the natural density
\[
P_{\Tam}(m):=\lim_{X\to\infty}\frac{\#\{E:\Ht(E)\le X,\ \Tam(E)=m\}}{\NN(X)}.
\]
These densities exist, and they are the coefficients of the Dirichlet series
\begin{equation}\label{eq:GOT-Dirichlet}
L_{\Tam}(s):=\sum_{m\ge 1}\frac{P_{\Tam}(m)}{m^s},
\end{equation}
which factors as an Euler product built from explicit local densities $\delta_p(t)$ (see Theorem~\ref{thm:GOT-local}).

We consider four statistics associated with the Tamagawa product. The first is the number of primes that contribute nontrivially to the Tamagawa product. This number need not equal the number of primes of bad reduction; indeed, it is smaller for a density-one subfamily.

To this end, for an elliptic curve $E/\Q$ in the family \eqref{eq:model}, we define
\begin{equation}\label{eq:omegaE}
\omega_{\Tam}(E):=\#\{p\text{ prime}: c_p(E)>1\}.
\end{equation}
Clearly, we have
\[
\omega_{\Tam}(E)\le \omega(N_E),
\]
where $\omega(N_E)$ denotes the number of distinct prime divisors of the conductor. More generally, for an integer $n$ we write $\omega(n)$ for the number of distinct prime divisors of $n$ and $\Omega(n)$ for the number of prime divisors counted with multiplicity. We consider the following four questions in arithmetic statistics.

\begin{question}\label{q:topic1}
How does $\omega_{\Tam}(E)$ vary as the height of $E$ grows?
\end{question}

\begin{question}\label{q:topic2}
What is the joint behavior of $\omega_{\Tam}(E)$ and $\Tam(E)$?
\end{question}

\begin{question}\label{q:topic3}
What can one say about the arithmetic of the integer $\Tam(E)$ itself? In particular, what can be established about the statistics
\[
\Omega(\Tam(E)),\qquad \omega(\Tam(E)),\qquad v_\ell(\Tam(E))\quad (\ell\text{ prime}),
\]
and how are they correlated among themselves?
\end{question}

\begin{question}\label{q:topic4}
How rapidly do the probabilities $P_{\Tam}(m)$ decay as $m\to\infty$? Equivalently, what bounds can be obtained, as a function of $M$, for the tails
\[
\sum_{m\ge M}P_{\Tam}(m)?
\]
\end{question}

In this paper we obtain the following results addressing the four questions above.

\subsection{Statement of results}\label{sec:statement-of-results}

We shall derive the arithmetic statistics of the local invariants considered here from a single Euler product indexed by \emph{local reduction data}. Let $\KK$ denote the set of local reduction data used in this paper (see \cite{SilvermanAdvanced, Tate} for background). For each prime $p$, write
\begin{equation}\label{eq:TateLocalDatum}
\tau_p(E)\in \KK
\end{equation}
for the local reduction datum of $E$ at $p$. An element of $\KK$ records the Kodaira symbol, the split/nonsplit data, and the group of $\mathbb{F}_p$-components. Thus, there are maps
\[
\kappa:\KK\to\{\text{Kodaira symbols}\},
\qquad
c:\KK\to\Z_{\ge1},
\]
where $\kappa$ is the Kodaira-symbol map and $c$ is the local Tamagawa-number map. We write
\[
\KK=\KK_{\mathrm{good}}\cup\KK_{\mathrm{mult}}\cup\KK_{\mathrm{add}}
\]
according to the good, multiplicative, and additive reduction classes of the underlying Kodaira symbol. Namely, we have
\[
\KK_{\mathrm{good}}=\{I_0\},\qquad
\KK_{\mathrm{mult}}=\{I_n^{\mathrm{sp}},\,I_n^{\mathrm{ns}}:n\ge 1\},
\]
\[
\kappa(\KK_{\mathrm{add}})=\{II,\,III,\,IV,\,I_0^\ast,\,I_n^\ast\ (n\ge 1),\,IV^\ast,\,III^\ast,\,II^\ast\}.
\]
In the multiplicative case, the displayed labels include the split/nonsplit distinction.  For additive reduction, a single Kodaira symbol may occur with several component groups. With this notation, we have
\[
c_p(E)=c(\tau_p(E))
\]
for a function $c:\KK\to\Z_{\ge1}$; see \cite[Chapter IV, \S9]{SilvermanAdvanced}, \cite{Tate}, and the local cases in Tables~A1-A3 of \cite{GOT}, with the correction at $p=3$ described in Remark~\ref{rem:three-adic-correction}. Correspondingly, for each local reduction datum $K\in\KK$, we let
\[
\delta_p(K):=\mu_p(\tau_p^{-1}(K))
\]
denote the proportion of short Weierstrass models whose $p$-minimal model has local reduction datum $K$, where $\mu_p$ is the normalized Haar measure on the space $\Z_p^2$ of coefficient pairs $(a_4,a_6)$. Similarly, for a Kodaira symbol $I$, let $\delta_p(I)$ denote the proportion of short Weierstrass models whose $p$-minimal model has Kodaira symbol $I$. The scalar densities of \cite{GOT} are aggregated from these by \begin{equation}\label{deltapt}
    \delta_p(t):=\sum_{c(K)=t}\delta_p(K).
    \end{equation}

We now let
\begin{equation}\label{eq:K0-def}
\KK_0:=\{I_0,\,I_1^{\mathrm{sp}},\,I_1^{\mathrm{ns}}\}
\qquad\text{and}\qquad
\beta_p:=\delta_p(I_0)+\delta_p(I_1^{\mathrm{sp}})+\delta_p(I_1^{\mathrm{ns}}).
\end{equation}
Then, as shown in Section~\ref{sec:local_densities}, for $p\ge 5$, we have
\begin{equation}\label{eq:K0-asymptotic-intro}
\delta_p(I_0)=1-O(p^{-1}),\qquad
\delta_p(I_1^{\mathrm{sp}})\asymp \delta_p(I_1^{\mathrm{ns}})\asymp p^{-1},\qquad
\sum_{K\notin \KK_0}\delta_p(K)\ll p^{-2}.
\end{equation}
In particular, $\beta_p=1-O(p^{-2})$. All three elements of $\KK_0$ have local Tamagawa number $1$, so they contribute trivially to the statistics considered here. For each local reduction datum $K\in\KK\setminus\KK_0$, define
\begin{equation}\label{eq:omegaKE}
\omega_K(E):=\#\{p\text{ prime}:\tau_p(E)=K\}.
\end{equation}
Then $\sum_p\delta_p(K)<\infty$. By contrast, $\sum_p\delta_p(K)=\infty$ for every $K\in\KK_0$, so the counts $\omega_K(E)$ with $K\in\KK_0$ need not admit a limiting distribution under naive-height ordering. Since the local reduction datum records $c_p(E)$, the statistic $\omega_{\Tam}(E)$ of \eqref{eq:omegaE} decomposes as
\[
\omega_{\Tam}(E)=\sum_{\substack{K\in\KK\setminus\KK_0\\ c(K)>1}}\omega_K(E).
\]

To answer Questions~\ref{q:topic1}--\ref{q:topic4}, we define a key multivariable generating function. For a finite set $\Lambda\subseteq \KK\setminus\KK_0$ of local reduction data and a finite set of primes $\Pi$, we let
\[
\mathbf u=(u_K)_{K\in\Lambda},
\qquad \mathbf z=(z_\ell)_{\ell\in\Pi},
\qquad \mathbf z^{\mathbf j}:=\prod_{\ell\in\Pi}z_\ell^{j_\ell},
\qquad \mathbf u^{\boldsymbol{\omega}}:=\prod_{K\in\Lambda}u_K^{\omega_K},
\]
and we write the valuation vector
\[
\mathbf v_\Pi(\Tam(E)):=\bigl(v_\ell(\Tam(E))\bigr)_{\ell\in\Pi}.
\]
For $X\ge 4$ and parameters with $\Reals(s)\ge 0$, we define
\begin{equation}\label{eq:ZX-def}
\mathcal Z_{\Lambda,\Pi,X}(s;u,w,\mathbf z,\mathbf u)
:=
\frac{1}{\NN(X)}
\sum_{\Ht(E)\le X}\frac{
 u^{\omega_{\Tam}(E)} w^{\Omega(\Tam(E))} \mathbf z^{\mathbf v_\Pi(\Tam(E))}\,\mathbf u^{\boldsymbol{\omega}_\Lambda(E)}}{\Tam(E)^{s}},
\end{equation}
where $\boldsymbol{\omega}_\Lambda(E)=(\omega_K(E))_{K\in\Lambda}$. The variable $u$ records $\omega_{\Tam}(E)$, the Dirichlet weight is $\Tam(E)^{-s}$, the variable $w$ records $\Omega(\Tam(E))$, the variables $z_\ell$ record the fixed $\ell$-adic valuations $v_\ell(\Tam(E))$, and each variable $u_K$ records the frequency $\omega_K(E)$ of the local reduction datum $K$.

The first main result is about the limiting behavior of this function as a multivariable Euler product. To this end, we let
\begin{equation}\label{eq:Phi-def}
\Phi_p(K;s,u,w,\mathbf z,\mathbf u):=u^{\1_{c(K)>1}}\,w^{\Omega(c(K))}\,\prod_{\ell\in\Pi}z_\ell^{v_\ell(c(K))}\,c(K)^{-s}\,\prod_{K'\in\Lambda}u_{K'}^{\1_{K=K'}}.
\end{equation}
For $K\in\KK_0$, one has $\Phi_p(K)=1$, since $c(K)=1$, $\Omega(c(K))=0$, $v_\ell(c(K))=0$ for every $\ell$, and $K\notin\Lambda$. This identity is used in the proof of absolute convergence of \eqref{eq:master-product}.

\begin{theorem}\label{thm:master}
Let $\Lambda\subseteq \KK\setminus\KK_0$ be a finite set of local reduction data, and let $\Pi$ be a finite set of primes. Suppose that $\Reals(s)\ge 0$, that $|u|\le 1$ and $|w|\le 1$, that $|z_\ell|\le 1$ for every $\ell\in\Pi$, and that $|u_K|\le 1$ for every $K\in\Lambda$. Then the limit
\[
\mathcal Z_{\Lambda,\Pi}(s;u,w,\mathbf z,\mathbf u):=
\lim_{X\to\infty}\mathcal Z_{\Lambda,\Pi,X}(s;u,w,\mathbf z,\mathbf u)
\]
exists. Moreover, the Euler product on the right side of \eqref{eq:master-product} converges absolutely and uniformly on every compact subset of the closed parameter domain
\[
\{\Reals(s)\ge 0,\ |u|\le 1,\ |w|\le 1,\ |z_\ell|\le 1\ \forall\ell\in\Pi,\ |u_K|\le 1\ \forall K\in\Lambda\},
\]
and
\begin{equation}\label{eq:master-product}
\mathcal Z_{\Lambda,\Pi}(s;u,w,\mathbf z,\mathbf u)
=
\prod_p
\left(
\beta_p+\sum_{K\in\KK\setminus\KK_0}\delta_p(K)\,\Phi_p(K;s,u,w,\mathbf z,\mathbf u)
\right).
\end{equation}
\end{theorem}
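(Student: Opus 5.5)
The plan is the standard sieve argument: local densities for finitely many primes, followed by a uniform tail estimate. Write $F(E)$ for the summand in \eqref{eq:ZX-def}. It is multiplicative over primes:
\[
F(E)=\prod_p \Phi_p(\tau_p(E);s,u,w,\mathbf z,\mathbf u).
\]
This holds because $\omega_{\Tam}$, $\Omega(\Tam)$, $v_\ell(\Tam)$, $\Tam^{-s}$ and $\omega_K$ are all additive (or multiplicative) over the local factors $c_p(E)$ and $\tau_p(E)$. On the parameter domain every $|\Phi_p(K)|\le 1$, since $|c(K)^{-s}|\le 1$ for $\Reals(s)\ge0$, and $\Phi_p(K)=1$ for $K\in\KK_0$.

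\emph{Absolute and uniform convergence.} For $p\ge5$ the $p$-th factor equals $1+\sum_{K\notin\KK_0}\delta_p(K)(\Phi_p(K)-1)$, using $\beta_p+\sum_{K\notin\KK_0}\delta_p(K)=1$. By \eqref{eq:K0-asymptotic-intro} this factor is $1+O(p^{-2})$, uniformly on the whole closed domain. Each factor is a uniformly convergent series of functions continuous (indeed holomorphic) in the parameters. So the product converges absolutely and uniformly on compacta, and in fact on the whole domain.

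\emph{Truncation.} Fix $Y$ and split $F(E)=F_{\le Y}(E)\,F_{>Y}(E)$ according to $p\le Y$ and $p>Y$. The factor $F_{\le Y}(E)$ depends only on $(a_4,a_6)$ modulo a sufficiently high power of each $p\le Y$, up to a set of small measure. Indeed, $\tau_p$ is locally constant on $\Z_p^2$ minus the discriminant locus, which has measure zero; since $\sum_K\delta_p(K)=1$, finitely many $K$ capture all but $\epsilon$ of the mass. Counting lattice points in the region $\Ht\le X$ (which has volume $\asymp X^{5/6}$) in residue classes then gives
\[
\frac1{\NN(X)}\sum_{\Ht(E)\le X}F_{\le Y}(E)\;\longrightarrow\;\prod_{p\le Y}\Bigl(\beta_p+\sum_{K\notin\KK_0}\delta_p(K)\Phi_p(K)\Bigr).
\]
This is where the correction at $p=3$ and the local densities of \cite{GOT} enter: $\delta_p(K)$ is exactly the Haar measure of the fibre $\tau_p^{-1}(K)$.

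\emph{Tail, the main obstacle.} We need
\[
\limsup_{X\to\infty}\frac1{\NN(X)}\sum_{\Ht(E)\le X}\bigl|F_{\le Y}(E)\bigr|\,\bigl|F_{>Y}(E)-1\bigr|\;\to\;0\quad\text{as }Y\to\infty.
\]
Since all factors have modulus at most $1$, $|F_{>Y}(E)-1|$ vanishes unless some $p>Y$ has $\tau_p(E)\notin\KK_0$, and it is at most $2$ otherwise. Now $\tau_p(E)\notin\KK_0$ forces $p^2\mid\Delta$, or $p\mid a_4$ and $p\mid a_6$ (additive or $I_n$ with $n\ge2$). So it suffices to bound the proportion of $(a_4,a_6)$ with $\Ht\le X$ such that $p^2\mid 4a_4^3+27a_6^2$ or $p\mid\gcd(a_4,a_6)$ for some $p>Y$.

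For $Y<p\le X^{\eta}$, lattice counting gives $O(p^{-2})$ per prime, hence $O(1/Y)$ in total. Large $p$ are the difficult range. Here I would invoke the uniformity estimate already used in \cite{GOT}, presumably via Ekedahl's sieve or the square-divisor bounds of Bhargava--Shankar type for the discriminant polynomial $4a_4^3+27a_6^2$. Such an estimate shows that the proportion with $p^2\mid\Delta$ for some $p>Y$ is $O(Y^{-\delta})+o(1)$.

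Combining the three steps and letting $X\to\infty$ and then $Y\to\infty$ yields existence of the limit and \eqref{eq:master-product}. All error terms are independent of the parameters, so the convergence is uniform on the domain.
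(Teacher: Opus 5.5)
\textbf{Overall.} Your outline has the same structure as the paper's proof. The pieces are:
\begin{itemize}
\item multiplicativity of the summand;
\item local factors $1+O(p^{-2})$ uniformly on the domain;
\item a finite-prime factorization from local constancy of $\tau_p$ and equidistribution in residue classes (your inner approximation by finitely many cylinder sets is an acceptable substitute for the weak-convergence/Portmanteau argument of Proposition~\ref{prop:finite-prime});
\item a bound, uniform in $X$, on the proportion of curves with $\tau_p(E)\notin\KK_0$ for some $p>Y$.
\end{itemize}

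\textbf{The gap.} It lies in the last step, which you yourself call the main obstacle. For large $p$ you give no argument, only an appeal to an unspecified estimate ``presumably'' coming from Ekedahl's sieve or Bhargava--Shankar. That is not a proof. Moreover, Ekedahl's sieve by itself would not cover the relevant set. It controls the codimension-two condition $p\mid a_4$, $p\mid a_6$. The $I_n$ ($n\ge 2$) locus, where $p^2\mid\Delta$ while $p\nmid\gcd(a_4,a_6)$, is a square-divisor condition and needs its own count.

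\textbf{The missing idea.} In this family the tail is elementary, because of the shape of the height box.
\begin{itemize}
\item Since $\Delta\neq 0$ and $|\Delta|\le 32X$, only primes $p\le\sqrt{32X}$ can satisfy $p^2\mid\Delta$.
\item Put $A_X=\lfloor (X/4)^{1/3}\rfloor$ and $B_X=\lfloor (X/27)^{1/2}\rfloor$, and fix $a_4$.
\item If $p\nmid a_4$, the congruence $27a_6^2\equiv -4a_4^3\pmod{p^2}$ has at most two solutions. So there are $\ll B_X/p^2+1$ admissible $a_6$.
\item If $p\mid a_4$, then $p\mid a_6$, giving $\ll (A_X/p+1)(B_X/p+1)$ pairs in total.
\end{itemize}
Summing over $Y<p\le\sqrt{32X}$ and dividing by $\NN(X)\asymp A_XB_X$ gives
\[
\ll \sum_{p>Y}\frac{1}{p^2}+\frac{\log X}{X^{1/3}}+\frac{\pi(\sqrt{32X})}{B_X}.
\]
The last term is $O(1/\log X)$ precisely because $B_X\asymp\sqrt{X}$ is comparable to the largest relevant prime. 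So the ``$+1$'' errors from the quadratic congruence, summed over all primes, cost only a Chebyshev-size saving, which still tends to $0$.

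This is Lemma~\ref{lem:uniform-large-prime-tail}. With it, no split at $X^\eta$ and no external sieve is needed, and the rest of your argument goes through as written.
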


\begin{remark}
Theorem~\ref{thm:master} gives the generating function we use to study the arithmetic statistics of local reduction data and Tamagawa products for elliptic curves ordered by naive height. In particular:
\begin{enumerate}
\item Setting $\Lambda=\varnothing$, $u=w=1$, and $z_\ell=1$ for all $\ell\in\Pi$ recovers the Euler product for $L_{\Tam}(s)$;
\item Setting $\Lambda=\varnothing$ and $w=1$, $z_\ell=1$ for all $\ell\in\Pi$ gives the joint generating function for $(\omega_{\Tam}(E),\Tam(E))$;
\item Setting $\Lambda=\varnothing$, $u=1$, $s=0$, and $z_\ell=1$ encodes $\Omega(\Tam(E))$;
\item Setting $\Lambda=\varnothing$, $u=w=1$, and $s=0$ encodes the fixed valuations $\bigl(v_\ell(\Tam(E))\bigr)_{\ell\in\Pi}$;
\item Setting $\Pi=\varnothing$, $u=w=1$, and $s=0$ encodes the joint frequency vector $(\omega_K(E))_{K\in\Lambda}$ for local reduction data.
\end{enumerate}
Summing the local densities over the fibers of $c:\KK\to\Z_{\ge1}$ recovers the scalar local factors of \cite{GOT}.
\end{remark}

Whenever the relevant limiting distribution has been established, we use $\Prob$, $\E$, $\Var$, $\Cov$, and $\Corr$ for the corresponding probability, expectation, variance, covariance, and correlation.

\begin{theorem}\label{thm:omegaK}
Let $\Lambda\subseteq\KK\setminus\KK_0$ be a finite set of local reduction data. For each multi-index $\mathbf r=(r_K)_{K\in\Lambda}\in\Z_{\ge 0}^{\Lambda}$, the density
\[
\pi_\Lambda(\mathbf r):=\lim_{X\to\infty}
\frac{\#\{E:\Ht(E)\le X,\ \omega_K(E)=r_K\ \text{for all }K\in\Lambda\}}{\NN(X)}
\]
exists. Its joint generating function is
\begin{equation}\label{eq:omegaK-gf}
\sum_{\mathbf r\in\Z_{\ge 0}^{\Lambda}}\pi_\Lambda(\mathbf r)\,\mathbf u^{\mathbf r}
=
\prod_{p\ \text{prime}}\!\left(1-\sum_{K\in\Lambda}\delta_p(K)+\sum_{K\in\Lambda}\delta_p(K)u_K\right)
\qquad (\mathbf u\in\C^\Lambda).
\end{equation}
The marginal distribution of each $\omega_K(E)$ has mean and variance
\begin{equation}\label{eq:omegaK-mean-var}
\E[\omega_K(E)]=\sum_{p\ \text{prime}}\delta_p(K),\qquad
\Var(\omega_K(E))=\sum_{p\ \text{prime}}\delta_p(K)(1-\delta_p(K)),
\end{equation}
and for distinct $K,K'\in\KK\setminus\KK_0$ we have
\begin{equation}\label{eq:omegaK-cov}
\Cov(\omega_K(E),\omega_{K'}(E))=-\sum_{p\ \text{prime}}\delta_p(K)\delta_p(K').
\end{equation}
All of these sums converge absolutely.
\end{theorem}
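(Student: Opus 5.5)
We will deduce Theorem~\ref{thm:omegaK} from Theorem~\ref{thm:master} by specializing to $\Pi=\varnothing$, $s=0$, and $u=w=1$. With these choices, \eqref{eq:Phi-def} gives $\Phi_p(K)=u_K$ for $K\in\Lambda$ and $\Phi_p(K)=1$ for $K\notin\Lambda$. Hence the local factor in \eqref{eq:master-product} becomes
\[
\beta_p+\sum_{K\in\KK\setminus(\KK_0\cup\Lambda)}\delta_p(K)+\sum_{K\in\Lambda}\delta_p(K)u_K
=1-\sum_{K\in\Lambda}\delta_p(K)+\sum_{K\in\Lambda}\delta_p(K)u_K,
\]
using $\sum_{K\in\KK}\delta_p(K)=1$. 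This identity holds because $\tau_p$ is defined on all of $\Z_p^2$ up to the measure-zero discriminant locus.

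For $|u_K|\le1$, Theorem~\ref{thm:master} then says that
\[
F_X(\mathbf u):=\NN(X)^{-1}\sum_{\Ht(E)\le X}\mathbf u^{\boldsymbol\omega_\Lambda(E)}=\sum_{\mathbf r}\pi_{\Lambda,X}(\mathbf r)\mathbf u^{\mathbf r}
\]
converges to the Euler product $F(\mathbf u)$, where $\pi_{\Lambda,X}(\mathbf r)$ is the finite-$X$ proportion. The $F_X$ are probability generating functions, so their coefficients are nonnegative and sum to $1$.

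To extract the densities, we restrict to the torus $|u_K|=1$ and use Fourier coefficients: $\pi_{\Lambda,X}(\mathbf r)=\int_{\mathbb T^\Lambda}F_X(\mathbf u)\mathbf u^{-\mathbf r}\,d\mathbf u$. Since $|F_X|\le1$, dominated convergence gives $\pi_{\Lambda,X}(\mathbf r)\to\int F\mathbf u^{-\mathbf r}$. The right side is the $\mathbf r$-th Taylor coefficient of $F$.

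We then show that $F$ is entire in $\mathbf u$. Writing the local factor as $1+\sum_{K\in\Lambda}\delta_p(K)(u_K-1)$, the product converges absolutely and locally uniformly on $\C^\Lambda$ because $\sum_p\delta_p(K)<\infty$ for $K\notin\KK_0$. This summability follows from \eqref{eq:K0-asymptotic-intro}: for $p\ge5$ we have $\delta_p(K)\ll p^{-2}$, and the finitely many small primes contribute boundedly. Expanding the product shows the coefficients are nonnegative. Moreover $F(\mathbf 1)=1$, so the limiting $\pi_\Lambda(\mathbf r)$ form a probability distribution whose generating function is \eqref{eq:omegaK-gf} on all of $\C^\Lambda$.

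For the moments, we note that $F$ is a product of independent Bernoulli-type factors. At each prime at most one $K\in\Lambda$ occurs, so the joint law is that of $\sum_p \mathbf X_p$, where the $\mathbf X_p$ are independent and $\mathbf X_p=e_K$ with probability $\delta_p(K)$, and $\mathbf X_p=0$ otherwise. The cumulants are therefore additive over $p$. Differentiating $\log F$ at $\mathbf u=\mathbf 1$ gives $\E[\omega_K]=\sum_p\delta_p(K)$. The second derivatives give the variance $\sum_p(\delta_p(K)-\delta_p(K)^2)$ and, for $K\ne K'$, the covariance $-\sum_p\delta_p(K)\delta_p(K')$; here we take $\Lambda=\{K,K'\}$.

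Differentiation term-by-term of $\log F$ is justified by local uniform convergence of $\sum_p\log(1+\sum_K\delta_p(K)(u_K-1))$ near $\mathbf 1$. Absolute convergence of all the sums again follows from $\sum_p\delta_p(K)<\infty$.

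These moments are the moments of the limiting distribution $\pi_\Lambda$. That is how $\E$, $\Var$ and $\Cov$ are interpreted in the theorem; we do not claim convergence of the finite-$X$ moments.

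The main obstacle is the passage from convergence of generating functions on the closed polydisc to convergence of the individual densities. The Fourier/dominated-convergence argument on the torus handles this cleanly, provided the convergence in Theorem~\ref{thm:master} holds pointwise on $|u_K|=1$, which that theorem asserts.
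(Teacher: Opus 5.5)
Your proof is correct. It differs from the paper's in the coefficient-extraction step.

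\textbf{The paper's route.} It uses only the convergence $F_X\to F$ on the open polydisc. It upgrades this to locally uniform convergence via Vitali--Montel (Lemma~\ref{lem:vitali}), and then applies Cauchy's formula on polytori of radius $\rho<1$ (Lemma~\ref{lem:cauchy-coeff}).

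\textbf{Your route.} You use the fact that Theorem~\ref{thm:master} already gives pointwise convergence on the boundary torus $|u_K|=1$. Each $F_X$ is a polynomial bounded by $1$ there. Fourier inversion plus bounded convergence therefore gives convergence of $\pi_{\Lambda,X}(\mathbf r)$ to the $\mathbf r$-th Fourier coefficient of $F$. That coefficient equals the Taylor coefficient because $F$ is entire. This is the lattice case of L\'evy's continuity theorem.

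\textbf{What each buys.} Your route avoids normal-family theory entirely, including its several-variable form. The cost is that it relies on the boundary case of Theorem~\ref{thm:master}, whereas the paper needs only the open polydisc. You also note that the limiting coefficients are nonnegative with $\sum_{\mathbf r}\pi_\Lambda(\mathbf r)=F(\mathbf 1)=1$. This makes explicit the no-escape-of-mass fact that the paper uses tacitly when it calls $F_K$ a probability generating function.

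\textbf{Moments.} Both arguments use termwise logarithmic differentiation. Your independent-sum picture makes the covariance transparent, since the events $\tau_p(E)=K$ and $\tau_p(E)=K'$ are disjoint at each $p$.

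\textbf{One small imprecision.} $\partial_{u_K}^2\log F(\mathbf 1)=-\sum_p\delta_p(K)^2$ is the second factorial cumulant. The variance is this plus $\partial_{u_K}\log F(\mathbf 1)$, not the second derivative alone. Your additivity-of-variances argument for independent Bernoulli-type summands does give the correct formula $\sum_p\delta_p(K)(1-\delta_p(K))$ directly.
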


The statistic $\omega_{\Tam}(E)$ is the aggregate of the counts $\omega_K(E)$ across those $K\in\KK\setminus\KK_0$ with $c(K)>1$. Collapsing the joint distribution of Theorem~\ref{thm:omegaK} accordingly yields the following.

\begin{theorem}[Answer to Question~\ref{q:topic1}]\label{thm:omegaE}
For each integer $r\ge 0$, the density
\[
\pi_r:=\lim_{X\to\infty}
\frac{\#\{E:\Ht(E)\le X,\ \omega_{\Tam}(E)=r\}}{\NN(X)}
\]
exists. Its generating function is
\begin{equation}\label{eq:omegaE-gf}
\sum_{r\ge 0}\pi_r u^r
=
\prod_{p\ \text{prime}}\bigl(\delta_p(1)+(1-\delta_p(1))u\bigr)
\qquad (u\in \C).
\end{equation}
The mean and variance are
\begin{equation}\label{eq:omegaE-mean-var}
\E[\omega_{\Tam}(E)] = \sum_{p\ \text{prime}} \bigl(1-\delta_p(1)\bigr)
\qquad {\text {and}}\qquad
\Var(\omega_{\Tam}(E)) = \sum_{p\ \text{prime}} \delta_p(1)\bigl(1-\delta_p(1)\bigr),
\end{equation}
and both sums converge absolutely.
\end{theorem}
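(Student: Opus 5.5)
We deduce Theorem~\ref{thm:omegaE} from Theorem~\ref{thm:master} by specializing $\Lambda=\varnothing$, $\Pi=\varnothing$, $s=0$, $w=1$, leaving only $u$ free. Then $\mathcal Z_{\varnothing,\varnothing,X}(0;u,1)=\NN(X)^{-1}\sum_{\Ht(E)\le X}u^{\omega_{\Tam}(E)}$, and $\Phi_p(K;0,u,1)=u^{\1_{c(K)>1}}$. The local factor in \eqref{eq:master-product} is therefore
\[
\beta_p+\sum_{\substack{K\notin\KK_0\\ c(K)=1}}\delta_p(K)+u\sum_{c(K)>1}\delta_p(K)=\delta_p(1)+(1-\delta_p(1))u .
\]
Here we use \eqref{deltapt}, the fact that every element of $\KK_0$ has $c=1$, and $\sum_{K\in\KK}\delta_p(K)=1$. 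The last identity holds because the sets $\tau_p^{-1}(K)$ partition $\Z_p^2$ up to the measure-zero discriminant locus.

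Next we need Theorem~\ref{thm:master} for all $u\in\C$, not just $|u|\le1$. Write $f_X(u)=\sum_r \pi_r(X)u^r$ with $\pi_r(X)$ the finite-height proportions; each $f_X$ is a polynomial. For $|u|\le1$, Theorem~\ref{thm:master} gives $f_X(u)\to F(u):=\prod_p(\delta_p(1)+(1-\delta_p(1))u)$. Since $\sum_p(1-\delta_p(1))\le\sum_p\sum_{K\notin\KK_0}\delta_p(K)\ll\sum_p p^{-2}<\infty$ by \eqref{eq:K0-asymptotic-intro}, the product $F$ converges absolutely and locally uniformly on all of $\C$ to an entire function. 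We extract coefficients with Cauchy's formula on the unit circle: $\pi_r(X)=\frac1{2\pi i}\oint_{|u|=1}f_X(u)u^{-r-1}\,du$. Because $|f_X|\le1$ on the circle and the convergence there is pointwise (indeed uniform on this compact set by Theorem~\ref{thm:master}), dominated convergence gives $\pi_r(X)\to[u^r]F(u)$. Hence $\pi_r$ exists and equals the $r$-th Taylor coefficient of $F$.

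Finally, $\sum_r\pi_r u^r$ and $F$ agree as power series, and $F$ is entire, so \eqref{eq:omegaE-gf} holds for all $u\in\C$. We also need $\sum_r\pi_r=F(1)=1$, so that $(\pi_r)$ is a genuine probability distribution and no mass escapes. For the moments, $F$ is the probability generating function of a sum of independent Bernoulli$(1-\delta_p(1))$ variables. Hence $\E=F'(1)$ and $\Var=F''(1)+F'(1)-F'(1)^2$, which we compute from the logarithmic derivative $\sum_p (1-\delta_p(1))/(\delta_p(1)+(1-\delta_p(1))u)$ evaluated at $u=1$. Differentiating termwise is justified by local uniform convergence of an absolutely convergent product of entire functions. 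This gives $\E=\sum_p(1-\delta_p(1))$ and $\Var=\sum_p\delta_p(1)(1-\delta_p(1))$, both bounded by $\sum_p O(p^{-2})$.

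The main obstacle is justifying coefficient extraction, i.e.\ that the limit of the generating functions yields the limits of the individual densities. This requires the uniform convergence on $|u|=1$ from Theorem~\ref{thm:master}. If only pointwise convergence were available, we would use boundedness ($|f_X|\le1$) and Vitali/Montel on the open disc instead: $f_X\to F$ locally uniformly there, so Taylor coefficients converge.
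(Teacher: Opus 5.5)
Your proposal is correct and follows essentially the same route as the paper. Both arguments specialize Theorem~\ref{thm:master} to $\Lambda=\Pi=\varnothing$, $w=1$, $s=0$, extract coefficients, extend the absolutely convergent product to an entire function, and obtain the mean and variance by logarithmic differentiation at $u=1$.

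The one variation is in the coefficient extraction. You use bounded convergence on the circle $|u|=1$, which is valid because Theorem~\ref{thm:master} gives pointwise limits on the closed disc and $|f_X|\le 1$. The paper instead uses Vitali on the open disc and Cauchy's formula on $|u|=\rho<1$. Your parenthetical claim that Theorem~\ref{thm:master} gives \emph{uniform} convergence of $f_X$ on the circle overreads it: the theorem asserts uniform convergence of the Euler product, not of $\mathcal Z_X$ to its limit. Your dominated-convergence argument does not need that claim, so nothing breaks.
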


As a corollary, we obtain the proportion of elliptic curves with fixed $\omega_{\Tam}(E)=r$, for every nonnegative integer $r$.

\begin{corollary}\label{cor:first-probabilities}
The following are true:
\begin{align*}
\pi_0 &= \prod_p\delta_p(1)=P_{\Tam}(1) \approx 0.50540113,\\
\pi_1 &= \sum_p\bigl(1-\delta_p(1)\bigr)\prod_{q\ne p}\delta_q(1) \approx 0.42200272,\\
\pi_2 &= \sum_{p<q}\bigl(1-\delta_p(1)\bigr)\bigl(1-\delta_q(1)\bigr)\prod_{r\ne p,q}\delta_r(1) \approx 0.06841047.
\end{align*}
More generally, we have
\[
\pi_r
=
\sum_{\substack{S\subseteq \{\text{primes}\}\\ |S|=r}}
\left(\prod_{p\in S}(1-\delta_p(1))\right)
\left(\prod_{q\notin S}\delta_q(1)\right).
\]
\end{corollary}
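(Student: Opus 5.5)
The plan is to read everything off the generating function \eqref{eq:omegaE-gf} of Theorem~\ref{thm:omegaE}. Write $q_p:=1-\delta_p(1)$. By \eqref{deltapt} and \eqref{eq:K0-asymptotic-intro}, every $K$ with $c(K)>1$ lies in $\KK\setminus\KK_0$, so $q_p\le\sum_{K\notin\KK_0}\delta_p(K)\ll p^{-2}$ for $p\ge5$. Hence $\sum_p q_p<\infty$, and also $\delta_p(1)>0$ for every $p$. Positivity at $p=2,3$ comes from the explicit local tables, since $c=1$ occurs, for example, at good reduction, which has positive measure. Consequently the product $F(u)=\prod_p(\delta_p(1)+q_p u)$ converges absolutely and locally uniformly on all of $\C$, and so defines an entire function.

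The first step is to expand this product. For $|u|\le R$, the infinite product expands absolutely as a sum over finite sets $S$ of primes:
\[
F(u)=\sum_{S\ \mathrm{finite}}\Bigl(\prod_{p\in S}q_p\Bigr)\Bigl(\prod_{q\notin S}\delta_q(1)\Bigr)u^{|S|}.
\]
To justify this, I will apply it first to finite truncations $\prod_{p\le P}$ and then pass to the limit $P\to\infty$. Dominated convergence applies because
\[
\sum_S\prod_{p\in S}q_pR\le\prod_p(1+q_pR)<\infty,
\]
and the tail products $\prod_{q>P,\,q\notin S}\delta_q(1)$ tend to $1$.

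The second step is to compare coefficients. Grouping the expansion by $|S|=r$ gives a power series in $u$. Since both sides of \eqref{eq:omegaE-gf} are power series that agree on $|u|\le1$ (indeed on all of $\C$), uniqueness of power series coefficients gives the general formula for $\pi_r$. The cases $r=0,1,2$ are then direct specializations. In addition, $\pi_0=\prod_p\delta_p(1)$ equals $P_{\Tam}(1)$, because $\omega_{\Tam}(E)=0$ if and only if $\Tam(E)=1$. Alternatively, $P_{\Tam}(1)$ is the constant term of $L_{\Tam}$ as $s\to\infty$, which is its Euler product in Theorem~\ref{thm:GOT-local}.

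The main obstacle is the numerical values. I would compute $\delta_p(1)$ exactly for small primes using the explicit densities from Section~\ref{sec:local_densities}, including the $p=3$ correction of Remark~\ref{rem:three-adic-correction}. I would then form $\pi_0$, and obtain $\pi_1$ and $\pi_2$ via $\pi_1=\pi_0\sum_p q_p/\delta_p(1)$ and $\pi_2=\pi_0 e_2\bigl(q_p/\delta_p(1)\bigr)$, where $e_2$ denotes the second elementary symmetric function. The tail beyond a cutoff $P$ would be handled with the bound $q_p\ll p^{-2}$, made explicit using the rational-function formula for $\delta_p(1)$ at $p\ge5$. This gives an error of order $1/P$, which is small enough for eight-digit accuracy once $P$ is large, possibly with a closed-form tail via $\zeta$-type sums.
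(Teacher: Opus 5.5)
Your proposal is correct and takes essentially the same approach as the paper, whose proof simply expands the Euler product \eqref{eq:omegaE-gf} and reads off the coefficient of $u^r$. You add details the paper leaves implicit: the dominated-convergence justification for expanding the infinite product, the identification $\pi_0=P_{\Tam}(1)$ via $\omega_{\Tam}(E)=0\iff\Tam(E)=1$, and the truncation scheme with a tail of size $O\bigl(\sum_{p>P}p^{-2}\bigr)$, which is also how the paper obtains the numerical values.
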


Question~\ref{q:topic2} is answered by the specialization $\Lambda=\varnothing$, $\Pi=\varnothing$, and $w=1$ in Theorem~\ref{thm:master}. Namely, we obtain the following formula.

\begin{corollary}[Answer to Question~\ref{q:topic2}]\label{cor:joint-omega-tam}
For $\Reals(s)\ge 0$ and $|u|\le 1$ we have
\[
\lim_{X\to\infty}\frac{1}{\NN(X)}\sum_{\Ht(E)\le X}u^{\omega_{\Tam}(E)}\Tam(E)^{-s}
=
\prod_p\left(\delta_p(1)+u\sum_{t\ge 2}\delta_p(t)t^{-s}\right).
\]
\end{corollary}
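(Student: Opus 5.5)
This corollary is the specialization $\Lambda=\varnothing$, $\Pi=\varnothing$, $w=1$ of Theorem~\ref{thm:master}. Since the parameter domain there only requires $|w|\le1$, the choice $w=1$ is admissible. With $\Lambda$ and $\Pi$ empty, the products over $K'\in\Lambda$ and $\ell\in\Pi$ in \eqref{eq:Phi-def} are empty. The summand in \eqref{eq:ZX-def} therefore reduces to $u^{\omega_{\Tam}(E)}\Tam(E)^{-s}$, so the left side of the corollary is exactly $\lim_{X\to\infty}\mathcal Z_{\varnothing,\varnothing,X}(s;u,1)$. Theorem~\ref{thm:master} gives the existence of this limit for $\Reals(s)\ge0$ and $|u|\le1$, together with the product formula \eqref{eq:master-product}, whose local factor is
\[
\beta_p+\sum_{K\in\KK\setminus\KK_0}\delta_p(K)\,u^{\1_{c(K)>1}}c(K)^{-s}.
\]

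The remaining step is to regroup this local factor by the value $t=c(K)$ using \eqref{deltapt}. Split the sum over $K\in\KK\setminus\KK_0$ into the terms with $c(K)=1$ and those with $c(K)=t\ge2$.

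For $c(K)=1$ the weight is $1$. Every element of $\KK_0$ also has $c=1$, so these terms combine with $\beta_p$ to give
\[
\beta_p+\sum_{K\notin\KK_0,\,c(K)=1}\delta_p(K)=\sum_{c(K)=1}\delta_p(K)=\delta_p(1).
\]

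For $t\ge2$ each term carries the weight $u\,t^{-s}$, and the terms with a given $t$ sum to $u\,\delta_p(t)t^{-s}$. No element of $\KK_0$ has $c(K)\ge2$, so the sum over $K\notin\KK_0$ with $c(K)=t$ is the full fiber sum $\delta_p(t)$.

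Two justifications are needed for the rearrangement. First, all the series involved converge absolutely, because $|u\,c(K)^{-s}|\le1$ and $\sum_K\delta_p(K)=1$, so regrouping by fibers is legitimate. Second, the fibers $c^{-1}(t)$ partition $\KK$. Combining the two pieces gives the local factor $\delta_p(1)+u\sum_{t\ge2}\delta_p(t)t^{-s}$, which is the claimed Euler product.

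There is no real obstacle here: the only point needing care is the bookkeeping that $\beta_p$ absorbs exactly the $\KK_0$ part of $\delta_p(1)$. Convergence of the product is inherited directly from Theorem~\ref{thm:master}.
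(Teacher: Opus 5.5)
Your proposal is correct and is the same argument as the paper's, whose entire proof is the specialization $\Lambda=\varnothing$, $\Pi=\varnothing$, $w=1$ of Theorem~\ref{thm:master}. Your regrouping of the local factor over the fibers of $c$, with $\beta_p$ absorbed into $\delta_p(1)$ and absolute convergence justifying the rearrangement, spells out bookkeeping that the paper leaves implicit.
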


We turn to Question~\ref{q:topic3}. For the additive statistic $\Omega(\Tam(E))$, we have the following result.

\begin{theorem}\label{thm:Omega}
For each integer $b\ge 0$, the density
\[
\rho_b:=\lim_{X\to\infty}
\frac{\#\{E:\Ht(E)\le X,\ \Omega(\Tam(E))=b\}}{\NN(X)}
\]
exists. Its generating function is
\begin{equation}\label{eq:Omega-gf}
\sum_{b\ge 0}\rho_b w^b
=
\prod_{p\ \text{prime}}\left(\sum_{t\ge 1}\delta_p(t)w^{\Omega(t)}\right)
\qquad (|w|\le 1).
\end{equation}
Moreover, we have
\begin{equation}\label{eq:Omega-mean}
\E[\Omega(\Tam(E))]
=
\sum_p\sum_{t\ge 1}\delta_p(t)\Omega(t)
<\infty.
\end{equation}
Consequently, we have
\begin{equation}\label{eq:smallomega-bound}
\E[\omega(\Tam(E))]\le \E[\Omega(\Tam(E))]<\infty.
\end{equation}
\end{theorem}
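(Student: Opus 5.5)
We specialize Theorem~\ref{thm:master} with $\Lambda=\varnothing$, $\Pi=\varnothing$, $u=1$ and $s=0$, keeping $w$ as the only free variable. Then $\Phi_p(K)=w^{\Omega(c(K))}$, and grouping the local data by the fibres of $c$ via \eqref{deltapt} turns the local factor into
\[
\beta_p+\sum_{K\notin\KK_0}\delta_p(K)w^{\Omega(c(K))}=\sum_{t\ge1}\delta_p(t)w^{\Omega(t)},
\]
because the data in $\KK_0$ and the remaining data with $c(K)=1$ together make up $\delta_p(1)$. Theorem~\ref{thm:master} therefore shows that, for every $|w|\le1$, the limit
\[
F(w):=\lim_{X\to\infty}\frac{1}{\NN(X)}\sum_{\Ht(E)\le X}w^{\Omega(\Tam(E))}
\]
exists and equals the product in \eqref{eq:Omega-gf}. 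The convergence is uniform on $|w|\le1$, so $F$ is continuous there and holomorphic inside.

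\textbf{Existence of $\rho_b$.} Write $F_X(w)=\sum_b\rho_b(X)w^b$, where $\rho_b(X)\ge0$ and $\sum_b\rho_b(X)=1$. These are polynomials converging to $F$ pointwise on the unit circle. Cauchy's formula on $|w|=1$, together with dominated convergence (the bound $|F_X|\le1$ holds), gives
\[
\rho_b(X)=\frac{1}{2\pi i}\oint F_X(w)\,w^{-b-1}\,dw \longrightarrow \frac{1}{2\pi i}\oint F(w)\,w^{-b-1}\,dw .
\]
Hence $\rho_b$ exists and equals the $b$-th Taylor coefficient of $F$. Since $F(1)=1=\sum_b\rho_b$ (no mass escapes), the generating function identity holds on the closed disk. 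Alternatively one can phrase this step as a probabilistic continuity theorem for probability generating functions.

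\textbf{The mean.} This is the main obstacle: pointwise convergence of the distributions does not by itself give convergence of expectations, and \eqref{eq:Omega-mean} is really a statement about the limiting law. I would define $\E$ with respect to $(\rho_b)$, which is how the paper uses $\E$. Then $\E[\Omega]=F'(1^-)$ by Abel's theorem for nonnegative coefficients. To compute this, I would first establish
\[
\sum_p\sum_{t\ge1}\delta_p(t)\Omega(t)<\infty .
\]
For $p\ge5$, $\Omega(t)\ll\log t$, and $c_p\le\max(4,n)$ for $I_n$ or $I_n^\ast$. Also $\delta_p(I_n)\ll p^{-n}$ and the additive densities are $\ll p^{-2}$ by \eqref{eq:K0-asymptotic-intro} and Section~\ref{sec:local_densities}. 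These give $\sum_t\delta_p(t)\Omega(t)\ll p^{-2}$ for large $p$; the primes $2$ and $3$ contribute finitely, since $\sum_n n\,2^{-n}<\infty$.

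Given this bound, each factor $f_p(w)=\sum_t\delta_p(t)w^{\Omega(t)}$ satisfies $f_p(1)=1$ and $f_p'(1)=\sum_t\delta_p(t)\Omega(t)$. Logarithmic differentiation is justified because $\sum_p\sup_{|w|\le1}|f_p'(w)|<\infty$, so the product converges to a function that is $C^1$ up to $w=1$ along the radius. This yields
\[
F'(1)=\sum_p f_p'(1),
\]
which is \eqref{eq:Omega-mean}.

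\textbf{Consequence.} Inequality \eqref{eq:smallomega-bound} is immediate from the pointwise bound $\omega(n)\le\Omega(n)$, provided $\omega(\Tam(E))$ also has a limiting distribution. That follows similarly, or we can simply bound $\E[\omega(\Tam)]$ through the limiting law of $\Omega$.
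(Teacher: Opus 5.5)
Your proposal is correct and follows the paper's proof essentially step for step. You specialize Theorem~\ref{thm:master} to $\Lambda=\Pi=\varnothing$, $u=1$, $s=0$, extract $\rho_b$ by Cauchy's formula, get total mass $1$ from $F(1)=1$ plus monotone convergence as $w\uparrow 1$ (write this step out rather than just asserting it), compute the mean by termwise logarithmic differentiation at $w=1$ using $\sum_p\sup|f_p'|<\infty$ on a segment $[1-\eta,1]$ where every factor stays near $1$, and finish with $\omega\le\Omega$. The only variations are minor: you extract coefficients by dominated convergence on the unit circle itself, which is legitimate because Theorem~\ref{thm:master} already gives pointwise convergence on the closed disc, and so you bypass the paper's Vitali--Montel step (Lemma~\ref{lem:vitali}) on circles of radius $\rho<1$; and you bound $\sum_t\delta_p(t)\Omega(t)\ll p^{-2}$ through reduction types (here use $c_p\le 4$ on all additive types, including $I_n^\ast$) rather than via $\Omega(t)\le t$ and \eqref{eq:moment-local-estimate}.
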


The next theorem gives the Euler-product formula used for the nonadditive statistic $\omega(\Tam(E))$.

\begin{theorem}\label{thm:valuations}
Let $\Pi$ be a finite set of primes. For each multi-index $\mathbf j=(j_\ell)_{\ell\in\Pi}\in\Z_{\ge 0}^{\Pi}$, the density
\[
Q_\Pi(\mathbf j)
:=
\lim_{X\to\infty}
\frac{\#\{E:\Ht(E)\le X,\ v_\ell(\Tam(E))=j_\ell\ \text{for all }\ell\in\Pi\}}{\NN(X)}
\]
exists, and the generating function is
\begin{equation}\label{eq:valuation-gf}
\sum_{\mathbf j\in\Z_{\ge 0}^{\Pi}}Q_\Pi(\mathbf j)\mathbf z^{\mathbf j}
=
\prod_{p\ \text{prime}}\left(\sum_{t\ge 1}\delta_p(t)\prod_{\ell\in\Pi}z_\ell^{v_\ell(t)}\right)
\qquad (|z_\ell|\le 1).
\end{equation}
In particular, for any finite set $A$ of primes, we have
\begin{equation}\label{eq:noA-divisibility}
\Prob\bigl(\ell\nmid \Tam(E)\ \text{for all }\ell\in A\bigr)
=
\prod_{p\ \text{prime}}\left(\sum_{\substack{t\ge 1 \\ (t,\prod_{\ell\in A}\ell)=1}}\delta_p(t)\right).
\end{equation}
Hence, for a fixed prime $\ell$, we have
\begin{equation}\label{eq:ell-divides-Tam}
\Prob\bigl(\ell\mid \Tam(E)\bigr)
=
1-
\prod_p\left(\sum_{\ell\nmid t}\delta_p(t)\right).
\end{equation}
In particular, the density of curves with odd Tamagawa product is
\begin{equation}\label{eq:odd-density}
\Prob\bigl(\Tam(E)\text{ is odd}\bigr)
=
\prod_p\left(\sum_{t\text{ odd}}\delta_p(t)\right).
\end{equation}
\end{theorem}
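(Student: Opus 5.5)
The plan is to derive Theorem~\ref{thm:valuations} from Theorem~\ref{thm:master} by specializing to $\Lambda=\varnothing$, $u=w=1$, $s=0$, leaving only the variables $\mathbf z=(z_\ell)_{\ell\in\Pi}$ free. With these choices, $\Phi_p(K)=\prod_{\ell\in\Pi}z_\ell^{v_\ell(c(K))}$. For $K\in\KK_0$ this equals $1$, so the $\beta_p$ term can be absorbed back into the sum. Grouping $K$ by the value $t=c(K)$ and using \eqref{deltapt}, the local factor in \eqref{eq:master-product} becomes $\sum_{t\ge1}\delta_p(t)\prod_{\ell\in\Pi}z_\ell^{v_\ell(t)}$. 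Rearranging within a single local factor is legitimate, since $\sum_K\delta_p(K)=1$ converges absolutely and $|\Phi_p|\le1$. Theorem~\ref{thm:master} then shows that for every $\mathbf z$ in the closed polydisc $\overline{\mathbb D}^{\Pi}$,
\[
\lim_{X\to\infty}\frac{1}{\NN(X)}\sum_{\Ht(E)\le X}\mathbf z^{\mathbf v_\Pi(\Tam(E))}=\prod_p\Bigl(\sum_{t\ge1}\delta_p(t)\prod_{\ell\in\Pi}z_\ell^{v_\ell(t)}\Bigr),
\]
and the product converges uniformly there.

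Next I would extract the individual densities $Q_\Pi(\mathbf j)$. For each $X$, the left side is the polynomial $F_X(\mathbf z)=\sum_{\mathbf j}Q_{\Pi,X}(\mathbf j)\mathbf z^{\mathbf j}$. Here $Q_{\Pi,X}(\mathbf j)\ge0$ is the proportion at height $X$ with valuation vector $\mathbf j$, and these proportions sum to $1$. Cauchy's formula on the torus $|z_\ell|=1$ gives
\[
Q_{\Pi,X}(\mathbf j)=\int_{\mathbb T^{\Pi}}F_X(\mathbf z)\,\overline{\mathbf z}^{\mathbf j}\,d\mathbf z .
\]
The functions $F_X$ are bounded by $1$ and converge pointwise on $\mathbb T^\Pi$, so dominated convergence gives $Q_{\Pi,X}(\mathbf j)\to Q_\Pi(\mathbf j)$. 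The limit is the $\mathbf j$-th Fourier coefficient of $G(\mathbf z):=\prod_p(\cdots)$.

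It remains to identify $G$ with the power series $\sum Q_\Pi(\mathbf j)\mathbf z^{\mathbf j}$. The main subtlety is mass loss: tightness is not automatic. I would handle it as follows.
\begin{itemize}
\item $G$ is continuous on $\overline{\mathbb D}^\Pi$ and holomorphic inside, being a uniform limit of finite products of power series with nonnegative coefficients $\sum_{\mathbf j}\bigl(\sum_{\mathbf v(t)=\mathbf j}\delta_p(t)\bigr)\mathbf z^{\mathbf j}$.
\item Its Taylor coefficients are therefore nonnegative and agree with its Fourier coefficients on the torus, namely $Q_\Pi(\mathbf j)$.
\item Evaluating at $\mathbf z=\mathbf 1$, where $G(\mathbf 1)=\prod_p\sum_t\delta_p(t)=1$, Abel's theorem for nonnegative coefficients gives $\sum_{\mathbf j}Q_\Pi(\mathbf j)=1$. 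The series therefore converges absolutely on the closed polydisc and equals $G$ there, which is \eqref{eq:valuation-gf}.
\end{itemize}

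The special cases then follow by substitution. Take $\Pi=A$ and set every $z_\ell=0$, with the convention $0^0=1$. Only $\mathbf j=\mathbf 0$ survives on the left, giving $Q_A(\mathbf 0)=\Prob(\ell\nmid\Tam(E)\ \forall\ell\in A)$. On the right, each local factor keeps only those $t$ with $v_\ell(t)=0$ for all $\ell\in A$, which is \eqref{eq:noA-divisibility}. Taking $A=\{\ell\}$ and passing to complements gives \eqref{eq:ell-divides-Tam}. Taking $\ell=2$ gives \eqref{eq:odd-density}.
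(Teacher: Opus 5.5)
Your proof is correct, but it extracts the densities by a different mechanism than the paper.

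\textbf{The paper's route.} It works only inside the open polydisc. It upgrades the pointwise limit from Theorem~\ref{thm:master} to locally uniform convergence via Vitali--Montel (Lemma~\ref{lem:vitali}). It reads off coefficients by Cauchy integrals on tori of radius $\rho<1$ (Lemma~\ref{lem:cauchy-coeff}). It uses Fatou to get $\sum_{\mathbf j}Q_\Pi(\mathbf j)\le 1$, so the power series converges on the closed polydisc. It then extends the identity to $|z_\ell|\le 1$ by continuity, using the $M$-test for the Euler product. Total mass $1$ appears only at the end, by setting $\mathbf z=\mathbf 1$.

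\textbf{Your route.} You integrate against characters on the unit torus itself. There the closed-domain statement of Theorem~\ref{thm:master} already gives pointwise convergence of the polynomials $F_X$, so dominated convergence suffices and no normal-family argument is needed. This is essentially L\'evy's continuity theorem for $\Z_{\ge0}^{\Pi}$-valued vectors. Continuity of the Euler product at $\mathbf 1$, combined with $G(\mathbf 1)=1$ and Abel's theorem, rules out escape of mass and gives $\sum_{\mathbf j}Q_\Pi(\mathbf j)=1$ directly rather than only $\le 1$.

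\textbf{What each buys.} Your argument is more elementary. The price is that it leans on the boundary case $|z_\ell|=1$ of Theorem~\ref{thm:master}. It also needs the identification of the Taylor coefficients of $G$ with its Fourier coefficients on $\mathbb T^{\Pi}$. That step is standard but deserves a line: compare the Cauchy integrals on the torus of radius $\rho$ and let $\rho\uparrow 1$, using uniform continuity of $G$ on the compact closed polydisc. The paper's route needs convergence only for $|z_\ell|<1$. It also reuses the same two analytic lemmas verbatim for Theorems~\ref{thm:omegaK}, \ref{thm:omegaE}, and \ref{thm:Omega}.

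Your specialization $\Pi=A$, $\mathbf z=\mathbf 0$ for \eqref{eq:noA-divisibility} is equivalent to the paper's choice of $\Pi\supseteq A$ with $z_\ell=1$ for $\ell\notin A$.
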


In terms of $\ell$-adic valuations, Theorem~\ref{thm:valuations} gives the following consequence.

\begin{corollary}\label{cor:ell-adic-mean}
For any fixed prime $\ell$, we have
\[
\E[v_\ell(\Tam(E))]
=
\sum_{p\ \text{prime}}\sum_{t\ge 1}\delta_p(t)v_\ell(t)
<\infty.
\]
\end{corollary}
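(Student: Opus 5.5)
The claim follows from Theorem~\ref{thm:valuations} with $\Pi=\{\ell\}$, by differentiating the one-variable generating function at $z=1$. Set
\[
G(z):=\sum_{j\ge0}Q_{\{\ell\}}(j)\,z^j=\prod_p F_p(z),\qquad F_p(z):=\sum_{t\ge1}\delta_p(t)\,z^{v_\ell(t)} .
\]
Here $\E[v_\ell(\Tam(E))]$ means $\sum_j jQ_{\{\ell\}}(j)$, the mean of the limiting distribution. Since $F_p(1)=\sum_t\delta_p(t)=1$, the identity $\E=G'(1^-)=\sum_p F_p'(1)$ is formally immediate from the logarithmic derivative. We will justify it without differentiating an infinite product on the boundary of the disc. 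Our route is additive, as in \eqref{eq:Omega-mean} of Theorem~\ref{thm:Omega}: the limiting law is that of a sum of independent local contributions.

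\textbf{Step 1 (finiteness of the local sum).} We show that $\sum_p\sum_t\delta_p(t)v_\ell(t)<\infty$. Since $v_\ell(t)\le\Omega(t)$, this follows directly from \eqref{eq:Omega-mean}. Directly, $v_\ell(t)\le\log_2 t$, $\delta_p(t)$ vanishes unless $t\ne1$ comes from a datum outside $\KK_0$, and those data have total mass $\ll p^{-2}$ by \eqref{eq:K0-asymptotic-intro}. The only unbounded Tamagawa numbers come from types $I_n$ and $I_n^\ast$, whose densities decay geometrically in $n$ (roughly $p^{-n}$), so $\sum_t\delta_p(t)\log t\ll p^{-2}$ uniformly in $p$, with separate finite checks at $p=2,3$.

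\textbf{Step 2 (identify the limit law).} Let $Y_p$ be independent random variables with $\Prob(Y_p=j)=\sum_{v_\ell(t)=j}\delta_p(t)$. Then $\E Y_p=F_p'(1)$, and by Step 1 and Borel--Cantelli (note $\sum_p\Prob(Y_p\ne0)\le\sum_p(1-\beta_p)<\infty$), $Y:=\sum_p Y_p$ is almost surely a finite sum. Its probability generating function is $\prod_pF_p(z)$ for $|z|\le1$, by independence and dominated convergence of the partial products, and this product converges by Theorem~\ref{thm:master}. By \eqref{eq:valuation-gf} and uniqueness of power-series coefficients on the closed unit disc, $Y$ has law $Q_{\{\ell\}}$.

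\textbf{Step 3 (mean).} Since all $Y_p\ge0$, monotone convergence gives
\[
\E[Y]=\sum_p\E[Y_p]=\sum_p\sum_t\delta_p(t)\,v_\ell(t),
\]
which is finite by Step 1. This is the claimed formula.

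The main obstacle is the uniform tail estimate in Step 1, namely bounding $\sum_t\delta_p(t)\log t$ by $O(p^{-2})$. If \eqref{eq:Omega-mean} is taken as already established, this is immediate from $v_\ell\le\Omega$. Otherwise one needs the explicit geometric decay of $\delta_p(I_n)$ and $\delta_p(I_n^\ast)$ in $n$ from the local density tables of Section~\ref{sec:local_densities}.
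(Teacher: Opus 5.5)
Your proposal is correct, but it takes a different route from the paper. The paper stays analytic. It writes $J_\ell(z)=\prod_pF_p(z)$ and bounds $\sup_{0\le z\le 1}|F_p'(z)|\le\sum_t\delta_p(t)v_\ell(t)\ll p^{-2}$ using Lemma~\ref{lem:local-estimates}. It then reruns the argument from the proof of Theorem~\ref{thm:Omega}: the mean value theorem keeps each $F_p$ in $[\tfrac12,\tfrac32]$ on some $[1-\eta,1]$, and $\sum_p\log F_p$ and $\sum_pF_p'/F_p$ converge uniformly there. This differentiates the infinite product at the boundary point $z=1$, and $J_\ell'(1)$ is then read off as the mean of a probability generating function.

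You instead realize $Q_{\{\ell\}}$ as the law of $\sum_pY_p$ with independent local $Y_p$, identify the law by matching coefficients on the open disc, and obtain the mean by monotone convergence. What your route buys is that it needs no derivative bounds and no control of the factors near $z=1$. It gives the identity in $[0,\infty]$ unconditionally, with finiteness as a separate input. Once a joint law is identified the same way, it would also reduce variance and covariance computations of the kind in Theorems~\ref{thm:covariance-vl} and~\ref{thm:corr} to sums of local covariances of independent vectors. The paper's route has the advantage of reusing the machinery already built for $\Omega(\Tam(E))$ and staying inside the generating-function framework of Theorem~\ref{thm:master}.

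Two small remarks. First, Step~1 is not really an obstacle. Estimate \eqref{eq:Omega-local-estimate} in Lemma~\ref{lem:local-estimates} already gives $\sum_t\delta_p(t)v_\ell(t)\ll p^{-2}$ for $p\ge5$. The explicit geometric tails $\delta_p(t)=\alpha_p p^{-t}$ for $t\ge5$ in Theorem~\ref{thm:GOT-local} handle $p=2,3$. Second, your side comment about $I_n^\ast$ is inaccurate: that type has $c_p\in\{2,4\}$, so only split $I_n$ gives unbounded Tamagawa numbers. Neither point affects your argument.
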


The next theorem records the exact covariance structure among $\ell$-adic valuations of $\Tam(E)$.

\begin{theorem}\label{thm:covariance-vl}
Suppose that $\ell$ and $\ell'$ are distinct primes. Then the following are true.

\noindent
(1) We have
\begin{displaymath}
\begin{split}
\Cov(v_\ell(\Tam(E)),&v_{\ell'}(\Tam(E)))
\\
&
\ \ \ =\sum_p\Bigl(
\sum_{t\ge 1}\delta_p(t)v_\ell(t)v_{\ell'}(t)
-
\bigl(\textstyle\sum_{t\ge 1}\delta_p(t)v_\ell(t)\bigr)
\bigl(\textstyle\sum_{t\ge 1}\delta_p(t)v_{\ell'}(t)\bigr)
\Bigr),
\end{split}
\end{displaymath}
where the outer sum converges absolutely.

\noindent
(2) This covariance is strictly negative; for example
\[
\Cov(v_2(\Tam(E)),v_3(\Tam(E)))\approx -0.02489.
\]

 \noindent
 (3) There is an absolute constant $C>0$ such that 
$$
|\Cov(v_\ell(\Tam(E)),v_{\ell'}(\Tam(E)))|\le \frac{C}{(\ell\ell')^{2}}.
$$
In particular, the covariance tends to $0$ as $\ell\ell'\to\infty$.
\end{theorem}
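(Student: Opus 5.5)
The plan is to deduce everything from Theorem~\ref{thm:valuations} with $\Pi=\{\ell,\ell'\}$, which I read probabilistically. For each prime $p$, let $(X_p,Y_p)=(v_\ell(T_p),v_{\ell'}(T_p))$, where $T_p$ is a random variable with $\Prob(T_p=t)=\delta_p(t)$ and the $T_p$ are independent.

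\textbf{Identifying the limiting law.} By \eqref{eq:valuation-gf}, the probability generating function of the limiting joint law $Q_\Pi$ is $\prod_p \E[z_\ell^{X_p}z_{\ell'}^{Y_p}]$. Restricting to $|z_\ell|=|z_{\ell'}|=1$ gives characteristic functions. The product converges uniformly there by Theorem~\ref{thm:master}, so Lévy's continuity theorem identifies $Q_\Pi$ with the law of $(\sum_p X_p,\sum_p Y_p)$.

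\textbf{Moment bounds.} Next I will use the local data in Tables A1--A3 of \cite{GOT} (with Remark~\ref{rem:three-adic-correction}). For additive data, $c(K)\le 4$. For $I_n^{\mathrm{ns}}$, $c(K)\le 2$. For $I_n^{\mathrm{sp}}$, $c(K)=n$. I also need the uniform local bound $\delta_p(I_n^{\mathrm{sp}})+\delta_p(I_n^{\mathrm{ns}})\ll p^{-n}$ for all $p$, including $p=2,3$. I plan to get it from the Haar measure of the set where $v_p(\Delta)\ge n$ with $p\nmid a_4a_6$-type nondegeneracy, i.e.\ from a Hensel count on $4a_4^3+27a_6^2\equiv 0 \pmod{p^n}$.

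Combined with \eqref{eq:K0-asymptotic-intro}, this gives
\[
\E[X_p^2]\ll p^{-2}\ \text{ if } \ell=2,\qquad \E[X_p^2]\ll p^{-2}+\sum_{\ell\mid n}p^{-n}\log^2 n\ll p^{-2}\ \text{ if } \ell\ge 3 .
\]
Actually, for $\ell\ge 5$ the value $c=\ell$ can only come from $I_n^{\mathrm{sp}}$ with $\ell\mid n$, so in that case $\E[X_p]\le\E[X_p^2]\ll p^{-\ell}$. For $\ell=3$ the value $3$ also occurs from $IV$ and $IV^*$, so $\E[X_p]\ll p^{-2}$.

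Hence $\sum_p\E[X_p^2]<\infty$, the partial sums converge in $L^2$, and covariance is additive over independent summands. This gives part (1):
\[
\Cov\Bigl(\sum_p X_p,\sum_p Y_p\Bigr)=\sum_p\Cov(X_p,Y_p),
\]
and this is exactly the displayed formula. Absolute convergence follows from Cauchy--Schwarz, since $|\Cov(X_p,Y_p)|\le \E[X_p^2]^{1/2}\E[Y_p^2]^{1/2}\ll p^{-2}$.

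\textbf{Part (3).} Write each term as $\E[X_pY_p]-\E[X_p]\E[Y_p]$ and bound the two pieces separately. A value $t$ with $\ell\ell'\mid t$ is at least $\ell\ell'\ge 6>4$, so it can only come from $I_n^{\mathrm{sp}}$ with $\ell\ell'\mid n$. Therefore
\[
\E[X_pY_p]\ll \sum_{\ell\ell'\mid n} p^{-n}\log^2 n\ll p^{-\ell\ell'}\log^2(\ell\ell').
\]
For the product term, the local bounds above give $\E[X_p]\ll p^{-\min(\ell,2)}$, with an $\ell\ge 5$ refinement of $p^{-\ell}$. Summing over $p$, both pieces are $\ll 2^{-\ell\ell'/2}+2^{-\min(\ell,2)-\min(\ell',2)}\cdot(\text{tail})$. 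I will keep the refined exponents $p^{-\ell}$ and $p^{-\ell'}$ whenever $\ell,\ell'\ge 5$, which gives $\ll 2^{-(\ell+\ell')}$. In every case the result is $\le C(\ell\ell')^{-2}$; when one of $\ell,\ell'$ lies in $\{2,3\}$ there are finitely many remaining small cases, which are absorbed into $C$.

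\textbf{Part (2), the main obstacle.} Strict negativity is the step I expect to be hardest. The termwise comparison is the natural route: I will show $\E[X_pY_p]<\E[X_p]\E[Y_p]$ for every $p$, or at least that the sum is negative. For large $p$ this is an exponent comparison: $\E[X_pY_p]\asymp p^{-\ell\ell'}$, while $\E[X_p]\E[Y_p]\gg p^{-\ell-\ell'}$ if the lower bounds $\delta_p(I_n^{\mathrm{sp}})\gg p^{-n}$ hold. Since $\ell\ell'>\ell+\ell'$ for distinct primes, the product term wins for all $p\ge p_0(\ell,\ell')$. The difficulty is making $p_0$ explicit and uniform in $\ell,\ell'$, and handling the small primes $p=2,3$ and small $p$ generally. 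For those I would compute the local factors exactly from the tables and check the inequality directly, for instance the value $\approx -0.02489$ in the case $(2,3)$. If a termwise inequality fails at some small $p$, I would instead show that the negative contribution of the large primes dominates.
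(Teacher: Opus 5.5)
Parts (1) and (3) of your plan are essentially sound.

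For (1) you take a different route from the paper. The paper extends $H=\prod_p h_p$ holomorphically to $|z_\ell|,|z_{\ell'}|\le 2$ and differentiates $\log H$ termwise at $(1,1)$. You instead realize the limiting law as that of $\bigl(\sum_pX_p,\sum_pY_p\bigr)$ with independent summands, and use $L^2$-convergence and additivity of covariance. This is valid and avoids the analytic-continuation bookkeeping.

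For (3), bounding $|\Cov(X_p,Y_p)|\le\E[X_pY_p]+\E[X_p]\E[Y_p]$ works without appealing to (2). However, the cases with one of $\ell,\ell'$ in $\{2,3\}$ are not ``finitely many.'' They are covered by keeping the refinement $\E[Y_p]\ll p^{-\ell'}$ for the prime $\ell'\ge5$, which gives $\sum_p p^{-2-\ell'}\ll 2^{-\ell'}\ll(\ell\ell')^{-2}$. Also, no Hensel count is needed. For $t\ge5$ only $I_t^{\mathrm{sp}}$ has $c=t$, and Theorem~\ref{thm:GOT-local} already gives $\delta_p(t)=\alpha_pp^{-t}$ for $t\ge5$ at every prime, including $p=2,3$, with explicit constants $\alpha_p$.

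The genuine gap is part (2), which your proposal does not prove. Your termwise strategy is the right one (it is the paper's), but three things fall short. First, the exponent comparison only yields $\E[X_pY_p]<\E[X_p]\E[Y_p]$ for $p\ge p_0(\ell,\ell')$, with unspecified constants. Second, checking the remaining primes ``directly from the tables'' is not a finite verification, since there are infinitely many pairs $(\ell,\ell')$; the value $-0.02489$ is a global sum and settles only $(2,3)$. Third, the fallback that large primes dominate is not argued, and for $(2,3)$ the prime $p=2$ alone contributes about $-0.0237$.

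The missing idea is that the local inequality holds at \emph{every} $p$, uniformly in $\ell\ne\ell'$.
\begin{itemize}
\item Since no $t\le4$ is divisible by two distinct primes, $\E[X_pY_p]$ is supported on $t\ge5$, where $\delta_p(t)=\alpha_pp^{-t}$ exactly. The bound $v_\ell(t)v_{\ell'}(t)\le t/(\ell\ell')$ then gives $\E[X_pY_p]\le\alpha_pp^{-\ell\ell'}(1-p^{-\ell\ell'})^{-2}$.
\item Against this one uses lower bounds valid for all $p$: $\E[X_p]\ge\delta_p(2)\ge 1/(2p^2)$ if $\ell=2$, and $\E[X_p]\ge\delta_p(3)\ge1/(3p^3)$ if $\ell=3$. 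These are checked from the explicit formulas, including at $p=2,3$.
\item If $\ell\ge5$, then $\E[X_p]\ge\delta_p(\ell)=\alpha_pp^{-\ell}$, and one also uses $1/88572\le\alpha_p<1/2$.
\end{itemize}
With these bounds the cases close as follows.
\begin{itemize}
\item For $(2,\ell')$ and $(3,\ell')$ with $\ell'\ge5$, the factor $\alpha_p$ cancels. One needs only $2p^{2-\ell'}<(1-p^{-2\ell'})^2$, respectively $3p^{3-2\ell'}<(1-p^{-3\ell'})^2$.
\item For $\ell,\ell'\ge5$ one needs $p^{-(\ell\ell'-\ell-\ell')}(1-p^{-\ell\ell'})^{-2}<\alpha_p$, which holds because $\ell\ell'-\ell-\ell'\ge23$.
\item The pair $(2,3)$ reduces to $\alpha_p<1/2$ for $p\ge5$, plus two numerical checks at $p=2,3$.
\end{itemize}
Summing these strict local inequalities over $p$ gives (2).
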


We also record the exact linear correlation between $\omega_{\Tam}(E)$ and $\Omega(\Tam(E))$.

\begin{theorem}\label{thm:corr}
We have
\begin{equation}\label{eq:cov-omega-Omega}
\Cov\bigl(\omega_{\Tam}(E),\Omega(\Tam(E))\bigr)
=
\sum_p\delta_p(1)\sum_{t\ge 1}\delta_p(t)\Omega(t),
\end{equation}
where the outer sum converges absolutely. Consequently,
\begin{equation}\label{eq:corr-omega-Omega}
\Corr\bigl(\omega_{\Tam}(E),\Omega(\Tam(E))\bigr)
=
\frac{\sum_p \delta_p(1)\sum_{t\ge 1}\delta_p(t)\Omega(t)}{\sqrt{\Var(\omega_{\Tam}(E))\,\Var(\Omega(\Tam(E)))}}
\approx 0.9801.
\end{equation}
\end{theorem}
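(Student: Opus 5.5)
The plan is to interpret $\Cov$ and $\Corr$ with respect to the limiting joint distribution of $(\omega_{\Tam}(E),\Omega(\Tam(E)))$, and to realize that distribution as a sum of independent local contributions. Specialize Theorem~\ref{thm:master} to $\Lambda=\Pi=\varnothing$ and $s=0$. On the closed bidisc $|u|,|w|\le 1$ this gives the joint limiting generating function
\[
G(u,w)=\prod_p f_p(u,w),\qquad f_p(u,w)=\delta_p(1)+u\sum_{t\ge 2}\delta_p(t)w^{\Omega(t)}.
\]
Each factor $f_p$ is the joint probability generating function of a pair $(X_p,Y_p)$ of $\Z_{\ge0}$-valued random variables. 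Here $X_p=\1_{c>1}$ and $Y_p=\Omega(c)$, and $c$ takes the value $t$ with probability $\delta_p(t)$. Take the pairs $(X_p,Y_p)$ independent across $p$. We have $\sum_p\Prob(X_p\ne 0)=\sum_p(1-\delta_p(1))<\infty$ by Theorem~\ref{thm:omegaE}, so Borel--Cantelli shows that only finitely many pairs are nonzero almost surely. Hence $(X,Y)=(\sum_pX_p,\sum_pY_p)$ is an almost surely finite random vector whose generating function is $G$. The limiting densities of Theorem~\ref{thm:master} are the coefficients of $G$, and a power series on the closed bidisc determines its coefficients. Therefore the limiting joint law of $(\omega_{\Tam},\Omega(\Tam))$ is exactly the law of $(X,Y)$.

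Next I compute covariances prime by prime. Whenever $X_p=0$ we have $c=1$, so $Y_p=0$; hence $X_pY_p=Y_p$. It follows that
\[
\Cov(X_p,Y_p)=\E[Y_p]-(1-\delta_p(1))\E[Y_p]=\delta_p(1)\sum_{t\ge1}\delta_p(t)\Omega(t).
\]
Independence across $p$ then gives $\Cov(X,Y)=\sum_p\Cov(X_p,Y_p)$, which is \eqref{eq:cov-omega-Omega}. The same argument gives $\Var(Y)=\sum_p\Var(Y_p)$, and $\Var(X)$ is the formula in Theorem~\ref{thm:omegaE}. The exchange of expectation with the infinite sums needs justification; for this it suffices that $\sum_p\E[Y_p^2]=\sum_p\sum_t\delta_p(t)\Omega(t)^2<\infty$.

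I expect this summability to be the main technical point. For $p\ge5$, every $K\notin\KK_0$ has total mass $\ll p^{-2}$ by \eqref{eq:K0-asymptotic-intro}. The only unbounded Tamagawa numbers come from $I_n$ and $I_n^\ast$, where $c\le 4n$, so $\Omega(c)^2\ll \log^2(n+1)$. Their densities decay geometrically in $n$, roughly $\delta_p(I_n)\ll p^{-n}$ and similarly for $I_n^\ast$. So the inner sum is $\ll p^{-2}$, uniformly in $p$. The primes $p=2,3$ contribute finitely, using the same geometric decay in $n$ from the tables of \cite{GOT} together with Remark~\ref{rem:three-adic-correction}. With these bounds, dominated convergence justifies all the interchanges. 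The outer sums for the covariance and both variances then converge absolutely, being $\ll\sum_p p^{-2}$.

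Finally, the numerical value $\approx0.9801$ comes from evaluating the three sums with the explicit local densities $\delta_p(t)$. I would sum exactly over $p\le P$ for a moderate cutoff $P$ and bound the tail by $\ll\sum_{p>P}p^{-2}\ll 1/(P\log P)$, using the leading asymptotics of $\delta_p(t)$ for large $p$. The cutoff $P$ should be large enough that the truncation error is smaller than the displayed precision.
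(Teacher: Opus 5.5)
Your proof is correct, but it reaches the formula by a different route from the paper. The paper works only with the Euler product $G(u,w)=\prod_p g_p(u,w)$. It uses the majorant $T_p=\sum_{t\ge2}\delta_p(t)t^3$ to control each $g_p$ and its derivatives on the bidisc $|u-1|,|w-1|\le1$. It then shows that $\sum_p\log g_p$ and its derivatives up to the mixed second order converge uniformly near $(1,1)$. Finally it obtains $\Cov$ and $\Var(\Omega(\Tam(E)))$ by termwise logarithmic differentiation and the joint-p.g.f.\ identity.

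You instead realize the limiting law as that of $\sum_p(X_p,Y_p)$ with independent local pairs. Logarithmic differentiation is then replaced by additivity of covariance under independence, and the identity $X_pY_p=Y_p$ explains the local term $\delta_p(1)\E[Y_p]$ directly. Your route needs only second moments and no analytic continuation beyond the closed unit bidisc; monotone convergence suffices since all the variables are nonnegative. It also shows directly that the limit law has total mass one. The paper's route reuses the same generating-function template as its other theorems, without constructing auxiliary random variables.

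Three small points to tidy:
\begin{enumerate}
\item Theorem~\ref{thm:master} only gives pointwise limits of generating functions. To assert that the joint densities exist and are the coefficients of $G$, you should invoke Lemmas~\ref{lem:vitali} and~\ref{lem:cauchy-coeff} in two variables, as in the proof of Theorem~\ref{thm:valuations}.
\item The summability you single out as the main technical point is already \eqref{eq:second-moment-local-estimate}. Also, only split $I_n$ has unbounded Tamagawa number, since additive types always have $c_p\le4$.
\item The correlation formula needs both variances to be strictly positive. This holds because, for example, $0<\delta_2(1)<1$ and $Y_2$ is nonconstant.
\end{enumerate}
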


Finally, we address Question~\ref{q:topic4}.

\begin{theorem}[Answer to Question~\ref{q:topic4}]\label{thm:tail}
For every integer $k\ge 0$, the $k$th moment
\begin{equation}\label{eq:moment-def}
M_k:=\sum_{m\ge 1}P_{\Tam}(m)m^k
\end{equation}
converges absolutely and is given by the Euler product
\begin{equation}\label{eq:moment-product}
M_k=
\prod_p\left(\sum_{t\ge 1}\delta_p(t)t^k\right).
\end{equation}
Consequently, for every integer $k\ge 1$ and every $M\ge 1$, we have
\begin{equation}\label{eq:tail-bound}
\sum_{m\ge M}P_{\Tam}(m)
\le
\frac{M_k}{M^k}.
\end{equation}
In particular, the cumulative tail $\sum_{m\ge M}P_{\Tam}(m)$ decays faster than every fixed negative power of $M$.
\end{theorem}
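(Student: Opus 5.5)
The plan is to get \eqref{eq:moment-product} from Theorem~\ref{thm:master} with real $s=-k$. That value lies outside the closed domain $\Reals(s)\ge 0$, so Theorem~\ref{thm:master} does not apply there directly. Instead I will work with the Dirichlet series $L_{\Tam}(s)=\sum_m P_{\Tam}(m)m^{-s}$. Its coefficients are nonnegative, and it equals the Euler product $\prod_p F_p(s)$ with $F_p(s):=\sum_{t\ge1}\delta_p(t)t^{-s}$ for $\Reals(s)\ge0$; this is the specialization $\Lambda=\varnothing$, $u=w=1$, $z_\ell=1$ of Theorem~\ref{thm:master}, i.e.\ the GOT factorization. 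The argument then has three steps. First, show that the right side of \eqref{eq:moment-product} is finite. Second, show that it bounds the partial sums of $M_k$. Third, get equality by monotone convergence. The tail bound \eqref{eq:tail-bound} is then Markov's inequality: $\sum_{m\ge M}P_{\Tam}(m)\le\sum_{m\ge M}P_{\Tam}(m)(m/M)^k\le M_k/M^k$.

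\textbf{Step 1 (local factors).} For each $p$, I need $\sum_t\delta_p(t)t^k<\infty$ and $\sum_p\bigl(\sum_t\delta_p(t)t^k-1\bigr)<\infty$. For $p\ge5$, \eqref{eq:K0-asymptotic-intro} gives $\sum_{K\notin\KK_0}\delta_p(K)\ll p^{-2}$, but the weight $c(K)^k$ is unbounded because of the $I_n$ and $I_n^\ast$ families. For $I_n$ and $I_n^\ast$, Tate's algorithm gives $\delta_p(I_n)\ll p^{-n}$ up to bounded factors, with $c\le n$ and $c\le4$ respectively. For $I_n^\ast$ the density decays like $p^{-n-c_0}$. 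The remaining additive types have $c\le4$. Hence
\[
\sum_{K\notin\KK_0}\delta_p(K)c(K)^k\ll_k \sum_{n\ge2}n^kp^{-n}+p^{-2}\ll_k p^{-2}
\]
for $p\ge5$, uniformly in $p$. For $p=2,3$, I only need finiteness, and that follows from the same geometric decay in $n$, read off Tables~A1--A3 of \cite{GOT} with the correction of Remark~\ref{rem:three-adic-correction}. This shows $\prod_p F_p(-k)$ converges absolutely to a finite positive number. I expect this uniform-in-$p$ estimate, weighted by $c(K)^k$, to be the main obstacle. I would first try to extract it from the explicit densities in Section~\ref{sec:local_densities}, bounding $\delta_p(K)\ll p^{-\mathrm{ord}_p\Delta_{\min}}$ crudely by the measure of the set where $p^n$ divides the discriminant.

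\textbf{Step 2 (upper bound).} For finite $Y$ and real $\sigma\ge0$, set $M_k^{(\le Y)}:=\sum_{m\le Y}P_{\Tam}(m)m^k$. It is enough to bound these partial sums by $\prod_pF_p(-k)$. Each finite product $\prod_{p\le y}F_p(s)$ is itself a Dirichlet series with nonnegative coefficients. It is the distribution of $\prod_{p\le y}c_p(E)$ in the limit, by the multiplicativity in the proof of Theorem~\ref{thm:master}; alternatively, it is formally the convolution of the local distributions. Its coefficients $a_y(m)$ converge to $P_{\Tam}(m)$ as $y\to\infty$, since both are limits of the Euler product at $\Reals(s)\ge0$, where Dirichlet coefficients are determined. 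Then
\[
\sum_{m\le Y}a_y(m)m^k\le\prod_{p\le y}F_p(-k)\le\prod_pF_p(-k)
\]
because every $F_p(-k)\ge1$. Letting $y\to\infty$ and then $Y\to\infty$ gives $M_k\le\prod_pF_p(-k)<\infty$. Since the coefficients are nonnegative, absolute convergence is automatic.

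\textbf{Step 3 (equality).} Once $M_k<\infty$, the series $L_{\Tam}(s)$ converges absolutely on $\Reals(s)\ge-k$ and is continuous there, while $\prod_pF_p(s)$ converges absolutely there by Step 1. The two functions agree on $\Reals(s)\ge0$, so they agree by analytic continuation on $\Reals(s)>-k$ and by continuity at $s=-k$. This gives \eqref{eq:moment-product}. An alternative route to equality is a Fatou/lower-bound argument on the finite products, since every term is nonnegative. Finally, the decay claim follows because $M_k<\infty$ holds for every $k$, so the tail bound \eqref{eq:tail-bound} is available with arbitrarily large exponent.
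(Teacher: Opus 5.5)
Your proof is correct, but it reaches \eqref{eq:moment-product} by a different route from the paper.

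The paper applies the cited identity \eqref{eq:GOT-Euler} directly at $s=-k+\sigma$ with $0<\sigma\le 1$. It relies on the assertion in Theorem~\ref{thm:GOT-local} that this identity holds wherever the Euler product converges absolutely. It then lets $\sigma\downarrow 0$, using uniform convergence of $\prod_p\sum_t\delta_p(t)t^{k-\sigma}$ (from \eqref{eq:moment-local-estimate}) on the product side and monotone convergence on the Dirichlet side. So the paper imports, rather than proves, the convergence of $\sum_m P_{\Tam}(m)m^{k-\sigma}$ to the left of $\Reals(s)=0$.

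You use the Euler product identity only on $\Reals(s)\ge 0$, which is what Theorem~\ref{thm:master} actually establishes, and you derive the convergence yourself. First you get the a priori bound $M_k\le\prod_pF_p(-k)$ from finite Euler products with nonnegative coefficients and $F_p(-k)\ge 1$. Then you continue analytically from $\Reals(s)>0$ to $\Reals(s)>-k$ and use continuity at $s=-k$. The paper's version takes a few lines. Yours is longer, but it depends only on results proved in the paper, and it supplies exactly the step that justifies using \eqref{eq:GOT-Euler} at negative real part.

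There are three things to tighten.

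\textbf{Step~1 is already done in the paper.} Equation \eqref{eq:moment-local-estimate} in Lemma~\ref{lem:local-estimates} gives $\sum_t\delta_p(t)t^k=1+O_k(p^{-2})$ for $p\ge5$. It follows directly from the explicit tail $\delta_p(t)=\frac{p^8(p-1)^2}{2(p^{10}-1)}p^{-t}$ for $t\ge 5$. The tails at $p=2,3$ in Theorem~\ref{thm:GOT-local} are geometric, so no bookkeeping over $\KK$ is needed.

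\textbf{Step~2 needs a continuity statement.} Uniqueness of Dirichlet coefficients does not by itself give $a_y(m)\to P_{\Tam}(m)$. Either of two arguments works:
\begin{enumerate}
\item For $S=\{p\le y\}$, $a_y(m)$ is the limiting density of $\Tam_S(E)=m$. This is Proposition~\ref{prop:finite-prime} summed over the finitely many factorizations $m=\prod_{p\in S}t_p$. By the truncation estimate in the proof of Theorem~\ref{thm:master}, $\Tam_S(E)\ne\Tam(E)$ only on a set of upper density $\ll\sum_{p>y}p^{-2}$.
\item Use uniform convergence of the partial products on $\Reals(s)=0$ together with the mean-value formula $a(m)=\lim_{T\to\infty}\frac{1}{2T}\int_{-T}^{T}f(it)m^{it}\,dt$.
\end{enumerate}

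\textbf{The Fatou alternative does not give equality.} The Fatou argument you mention only reproduces the inequality of Step~2, so keep the analytic-continuation step for equality.
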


\subsection{Examples and numerics}
\label{subsec:numerics}
We record numerical illustrations of the main theorems. To assess cutoff stability, we computed the relevant Euler products at the prime cutoffs $p\le 10^6$, $p\le 10^7$, and $p\le 10^8$. For each prime $p$ in the cutoff we used the exact rational formulas in Theorem~\ref{thm:GOT-local}; the rows for expectations, variances, covariances, and correlations were then obtained by the corresponding convergent sums displayed in Theorems~\ref{thm:omegaE}, \ref{thm:Omega}, \ref{thm:covariance-vl}, and \ref{thm:corr}. The omitted tail is bounded by the local estimates in Lemma~\ref{lem:local-estimates}: for every scalar Euler product or first moment the tail after $Y$ is $O(\sum_{p>Y}p^{-2})$, and for $\Cov(v_2,v_3)$ the proof of Theorem~\ref{thm:covariance-vl} gives the sharper tail $O(\sum_{p>Y}p^{-5})$. Thus the table is reproducible directly from the displayed formulas; the numerical values quoted in the text are the $p\le 10^8$ truncations.

\begin{table}[ht]
\centering
\small
\begin{tabular}{lccc}
\hline
Statistic & $p\le 10^6$ & $p\le 10^7$ & $p\le 10^8$ \\
\hline
$\pi_0$ & 0.505401163567 & 0.505401132275 & 0.505401129574 \\
$\pi_1$ & 0.422002711220 & 0.422002716384 & 0.422002716829 \\
$\pi_2$ & 0.068410442333 & 0.068410464226 & 0.068410466116 \\
$\E[\omega_{\Tam}(E)]$ & 0.571499207875 & 0.571499269791 & 0.571499275135 \\
$\Var(\omega_{\Tam}(E))$ & 0.407537905234 & 0.407537967150 & 0.407537972493 \\
$\E[\Omega(\Tam(E))]$ & 0.589821573544 & 0.589821635460 & 0.589821640804 \\
$\Prob(\Tam(E)\text{ is odd})$ & 0.575088754178 & 0.575088718571 & 0.575088715498 \\
$\E[\Tam(E)]$ & 1.818615749367 & 1.818615861968 & 1.818615871687 \\
$\Corr(\omega_{\Tam}(E),\Omega(\Tam(E)))$ & {0.980088084} & {0.980088087} & {0.980088087} \\
{$\Cov(v_2(\Tam(E)),v_3(\Tam(E)))$} & {$-0.024890$} & {$-0.024890$} & {$-0.024890$} \\
\hline
\end{tabular}
\caption{Numerical stability of the truncated Euler products.}
\label{tab:cutoff-stability}
\end{table}

\subsubsection{The distribution of \texorpdfstring{$\omega_{\Tam}(E)$}{the Tamagawa prime count}}

By Corollary~\ref{cor:first-probabilities} and Theorem~\ref{thm:omegaE}, the first few probabilities are given by explicit convergent Euler products. Numerically, one finds
\[
\pi_0 = \Prob(\omega_{\Tam}(E)=0) = P_{\Tam}(1) \approx 0.50540113,
\]
\[
\pi_1 = \Prob(\omega_{\Tam}(E)=1) \approx 0.42200272,
\qquad {\text {\rm and}}\qquad
\pi_2 = \Prob(\omega_{\Tam}(E)=2) \approx 0.06841047.
\]
Thus more than half of all curves in this height ordering have trivial Tamagawa product, about $42.2\%$ have exactly one prime with nontrivial local Tamagawa number, and only about $6.8\%$ have exactly two such primes.

The mean and variance from Theorem~\ref{thm:omegaE} are numerically
\[
\E[\omega_{\Tam}(E)] \approx 0.57149928
\qquad {\text {\rm and}}\qquad
\Var(\omega_{\Tam}(E)) \approx 0.40753797.
\]
So the number of primes contributing nontrivially to $\Tam(E)$ is typically very small.

\subsubsection{The additive arithmetic of \texorpdfstring{$\Tam(E)$}{Tam(E)}}

Theorem~\ref{thm:Omega} gives an Euler product for this additive statistic. Numerically, the resulting mean is
\[
\E[\Omega(\Tam(E))] \approx 0.58982164.
\]
This is only slightly larger than $\E[\omega_{\Tam}(E)]$. The correlation between the two statistics is approximately $0.9801$ by Theorem~\ref{thm:corr}.

From Theorem~\ref{thm:valuations}, the density of curves with odd Tamagawa product is
\[
\Prob\bigl(\Tam(E)\text{ is odd}\bigr)
=
\prod_p\Bigl(\sum_{t\text{ odd}}\delta_p(t)\Bigr)
\approx 0.57508872.
\]
Equivalently, we have
\[
\Prob\bigl(2\mid \Tam(E)\bigr) \approx 0.42491128.
\]

Theorem~\ref{thm:covariance-vl} records the exact covariance between different $\ell$-adic valuations of $\Tam(E)$. At $\ell=2$, $\ell'=3$, the numerical value is
\[
\Cov(v_2(\Tam(E)),v_3(\Tam(E)))\approx -0.02489.
\]
The sign is negative for every pair of distinct primes, and Theorem~\ref{thm:covariance-vl}(3) gives the decay as $\ell\ell'\to\infty$.

\subsubsection{A moment example}

The case $k=1$ of Theorem~\ref{thm:tail} gives the mean of the limiting distribution of $\Tam(E)$
\[
M_1 = \E[\Tam(E)] = \prod_p\left(\sum_{t\ge 1}\delta_p(t)t\right).
\]
Truncating this Euler product at $p\le 10^8$ gives
\[
\E[\Tam(E)] \approx 1.81861587.
\]
In particular, this corrects the value $1.8193\dots$ quoted in Theorem~1.5 of \cite{GOT}.

\begin{remark}
Theorem~\ref{thm:master} gives the Euler products for $\omega_{\Tam}(E)$, $\Omega(\Tam(E))$, and the fixed valuation vector $\mathbf v_\Pi(\Tam(E))$. The statistic $\omega(\Tam(E))$, the number of distinct prime divisors of the global integer $\Tam(E)$, is not additive in the local factors $c_p(E)$. As a consequence, there is no one-variable Euler product obtained by inserting a factor $u^{\omega(t)}$ at each prime $p$. We therefore study fixed valuation statistics first, and then recover finite-support divisibility information from them. This gives \eqref{eq:noA-divisibility}--\eqref{eq:ell-divides-Tam}.
\end{remark}

The paper is organized as follows. Section~\ref{sec:local_densities} collects the local densities $\delta_p(K)$ for local reduction data, proves the uniform local estimates needed for absolute convergence, and establishes the equidistribution, large-prime tail, and finite-prime factorization lemmas that reduce the global questions to finitely many local conditions. Section~\ref{sec:proofs} proves the master Euler product. Section~\ref{sec:analytic} records the analytic coefficient-extraction lemmas. Finally, Section~\ref{sec:distribution-proofs} derives the limiting distributions of the counts $\omega_K(E)$ for local reduction data, $\omega_{\Tam}(E)$ and $\Omega(\Tam(E))$, the valuation statistics for $\Tam(E)$, the covariance and correlation theorems, and the moment and tail bounds for the Tamagawa product. In Section~\ref{sec:AxiomProver}, we describe the autoformalization of the results of this paper in Lean by AxiomProver.

\section{Local densities}\label{sec:local_densities}

In this section we collect the local facts needed later. The explicit densities come from \cite{GOT}, with the two corrections of Remarks~\ref{rem:local-normalization} and~\ref{rem:three-adic-correction}; the other results are proved here, at the level of local reduction data. The discussion is based on Tate's algorithm, which computes the Kodaira symbol, the splitting data, the component group, and the local Tamagawa number at all primes $p$. For background, see Chapter~IV of \cite{SilvermanAdvanced} or \cite{Tate}.

\subsection{The local densities}
We define the local densities $\delta_p(K)$ and $\delta_p(t)$, and state the explicit formulas for $\delta_p(t)$ on which all of our numerical values rest, together with the two corrections to \cite[Lemma~3.1]{GOT} that they require.

For each prime $p$ and local reduction datum $K\in\KK$, let $\delta_p(K)$ denote the proportion of short Weierstrass models whose $p$-minimal model has datum $K$. Equivalently, $\delta_p(K)$ is the normalized Haar measure of the corresponding subset of $\Z_p^2$; the singular locus $\Delta=0$ has Haar measure zero, so this agrees with the proportion language used in \cite{GOT}. These local densities satisfy
\begin{equation}\label{eq:delta-normalization}
\sum_{K\in\KK}\delta_p(K)=1,
\end{equation}
and the scalar densities of \cite{GOT} are obtained by aggregating over local reduction data of fixed local Tamagawa number (see \eqref{deltapt}):
\begin{equation}\label{eq:delta-collapse}
\delta_p(t)=\sum_{K\in\KK\,:\,c(K)=t}\delta_p(K)\qquad (t\ge 1).
\end{equation}
The densities $\delta_p(K)$ are those obtained from the cases of Tate's algorithm, as tabulated in Table~A1 of \cite{GOT} for $p\ge5$ and in Tables~A2 and~A3 for $p=2,3$, with the correction to Table~A3 described in Remark~\ref{rem:three-adic-correction}; in the present paper we only need their normalization, their aggregation to the scalar densities, and the uniform estimate \(\sum_{K\notin\KK_0}\delta_p(K)\ll p^{-2}\). Griffin, Tsai, and one of the authors \cite[\S 2]{GOT} computed the scalar densities $\delta_p(t)$ explicitly, as follows.

\begin{theorem}\label{thm:GOT-local}
The numbers $\delta_p(t)$ satisfy the following formulas.
\begin{enumerate}
\item For $p=2$, we have
\[
\delta_2(1)=\frac{241}{396},\qquad
\delta_2(2)=\frac{7495}{24552},\qquad
\delta_2(3)=\frac{1153}{16368},\qquad
\delta_2(4)=\frac{171}{10912},
\]
and for $t\ge 5$, we have
\[
\delta_2(t)=\frac{1}{2^{t+1}\cdot 1023}.
\]

\item For $p=3$, we have
\[
\delta_3(1)=\frac{1924841}{2125728},\qquad
\delta_3(2)=\frac{509345}{6377184},\qquad
\delta_3(3)=\frac{30619}{2391444},\qquad
\delta_3(4)=\frac{1193}{652212},
\]
and for $t\ge 5$, we have
\[
\delta_3(t)=\frac{1}{3^{t+1}\cdot 29524}.
\]

\item If $p\ge 5$, then we have
\[
\delta_p(t)=
\begin{cases}
\displaystyle
1-
\frac{p(6p^7+9p^6+9p^5+7p^4+8p^3+7p^2+9p+6)}{6(p+1)^2(p^8+p^6+p^4+p^2+1)}, & t=1,\\[14pt]
\displaystyle
\frac{p(2p^7+2p^6+p^5+p^4+2p^3+p^2+2p+2)}{2(p+1)^2(p^8+p^6+p^4+p^2+1)}, & t=2,\\[14pt]
\displaystyle
\frac{p^2(p^4+1)}{2(p+1)(p^8+p^6+p^4+p^2+1)}, & t=3,\\[14pt]
\displaystyle
\frac{p^3(3p^2-2p+1)}{6(p+1)(p^8+p^6+p^4+p^2+1)}, & t=4,\\[14pt]
\displaystyle
\frac{p^{10}-2p^9+p^8}{2p^t(p^{10}-1)}, & t\ge 5.
\end{cases}
\]
\end{enumerate}
Moreover, we have
\begin{equation}\label{eq:GOT-Euler}
\sum_{m\ge 1}\frac{P_{\Tam}(m)}{m^s}
=
\prod_p\left(\sum_{t\ge 1}\delta_p(t)t^{-s}\right)
\end{equation}
for every $s$ for which the Euler product converges absolutely.
\end{theorem}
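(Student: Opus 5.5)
This theorem is quoted from \cite[\S2]{GOT}, with the normalization corrections mentioned in Remarks~\ref{rem:local-normalization} and~\ref{rem:three-adic-correction}. I would therefore not rederive it from nothing. Instead I would organize a verification in three stages.

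\emph{Local formulas for $p\ge5$.} For $p\ge 5$ the short Weierstrass model is $p$-minimal unless $p^4\mid a_4$ and $p^6\mid a_6$. The set of non-minimal pairs has measure $p^{-10}$, and rescaling $(a_4,a_6)\mapsto(a_4/p^4,a_6/p^6)$ maps it onto all of $\Z_p^2$. Conditioning on minimality therefore produces the factor $(1-p^{-10})^{-1}$, equivalently the denominator $p^{8}+p^6+p^4+p^2+1$ after clearing $p^2-1$ factors.

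On minimal models I would run Tate's algorithm, stratified by $(v(a_4),v(a_6),v(\Delta))$:
\begin{itemize}
\item Multiplicative reduction $I_n$ occurs when $p\nmid a_4a_6$ and $v(\Delta)=n$. The split and nonsplit cases each have half the measure, the split/nonsplit condition being the quadratic character of $-a_6/2$ (up to units). Then $c=n$ in the split case, and $c\in\{1,2\}$ according to the parity of $n$ in the nonsplit case.
\item Additive types are read off from valuations. $II$, $II^*$ give $c=1$; $III$, $III^*$ give $c=2$. $IV$, $IV^*$ give $c=3$ or $1$ according to whether $a_6/p^{v}$ is a square. $I_0^*$ gives $c=1,2,4$ according to the number of $\mathbb F_p$-roots of a cubic. $I_n^*$ gives $c\in\{2,4\}$.
\end{itemize}
Computing the Haar measure of each stratum is routine counting of residues, for instance the density of $v(\Delta)=n$ among $p\nmid a_4a_6$. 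Summing by $t$ then gives the stated rational functions. For $t\ge5$ only split $I_t$ contributes, with measure $\tfrac12(1-p^{-1})^2p^{-t}$ times the minimality factor. This matches $\frac{p^{10}-2p^9+p^8}{2p^t(p^{10}-1)}$, which is a good consistency check.

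\emph{The primes $2$ and $3$.} Here the short model need not be minimal in simple ways, and Tate's algorithm needs coordinate changes. I would follow Tables~A2--A3 of \cite{GOT} with the $p=3$ correction, recomputing the measures of the finitely many wild strata by enumerating residues modulo a sufficiently high power $p^N$. This check can be done by machine. The tails $t\ge5$ again come only from split multiplicative $I_t$, giving the shown geometric forms with constants $1023=2^{10}-1$ and $29524=(3^{10}-1)/2$.

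\emph{The global identity \eqref{eq:GOT-Euler}.} I would prove this as the special case $\Lambda=\Pi=\varnothing$, $u=w=1$ of the master product, Theorem~\ref{thm:master}, whose proof is given later and does not depend on this identity. Concretely:
\begin{enumerate}
\item For finitely many primes $p\le Y$, the local conditions are determined modulo a power of each such $p$. Equidistribution in boxes of the height region gives the product of local densities.
\item A uniform tail estimate shows that the proportion of curves with some $p>Y$ having $c_p>1$ is $\ll\sum_{p>Y}p^{-2}+o(1)$. This uses the fact that such $p$ forces $p^2\mid\Delta$, together with a sieve that handles large $p$ via the Ekedahl-type bound.
\item Comparing coefficients then gives $P_{\Tam}(m)$ as the coefficient of $m^{-s}$ in the absolutely convergent product.
\end{enumerate}

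The main obstacle is the uniform tail estimate for large primes, in the range where $p$ is comparable to a power of $X$. I would handle it by splitting into $p\le X^{1/6}$, where congruence counting applies, and larger $p$, where the bound $p^2\mid 4a_4^3+27a_6^2$ combined with a Bhargava-style geometric sieve gives $o(\NN(X))$.
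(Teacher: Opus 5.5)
Your route differs from the paper's, whose proof is purely citational. The explicit values are Lemma~3.1 of \cite{GOT}, aggregated from Tables~A1--A3 via \eqref{eq:delta-collapse}, with the two corrections of Remarks~\ref{rem:local-normalization} and~\ref{rem:three-adic-correction}. Identity \eqref{eq:GOT-Euler} is simply quoted as Theorem~1.1 of \cite{GOT}.

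\emph{The local formulas.} Here you end up where the paper does, since at $p=2,3$ you also defer to Tables~A2--A3. Your $p\ge5$ sketch is structurally right: the factor $(1-p^{-10})^{-1}$, the density $(p-1)^2/(2p^{n+2})$ for each multiplicative branch, and the fact that only split $I_t$ contributes for $t\ge5$. One small slip: the split criterion is that $-c_6=864a_6$ be a square, i.e.\ the character of $6a_6$. This differs from the character of $-a_6/2$ by the character of $-3$. It is harmless, because only the half/half split coming from the twist is used.

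\emph{The global identity.} Deriving \eqref{eq:GOT-Euler} from Theorem~\ref{thm:master} instead of citing it is a genuinely different and legitimate choice. It is non-circular, since the proof of Theorem~\ref{thm:master} uses only Lemmas~\ref{lem:K0-asymptotic}, \ref{lem:local-estimates}, \ref{lem:equidistribution}, \ref{lem:uniform-large-prime-tail} and Proposition~\ref{prop:finite-prime}, never \eqref{eq:GOT-Euler}.

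\textbf{The gap.} It lies in what the specialization of Theorem~\ref{thm:master} actually delivers.
\begin{itemize}
\item Theorem~\ref{thm:master} controls only $\lim_X\NN(X)^{-1}\sum_{\Ht(E)\le X}\Tam(E)^{-s}$, and only for $\Reals(s)\ge0$, where the weights are bounded by $1$.
\item The statement asserts \eqref{eq:GOT-Euler} wherever the product converges absolutely. Because $\delta_p(t)\ll p^{-t}$ for $t\ge5$, that is every $s\in\C$.
\item The half-plane $\Reals(s)<0$ is exactly what the paper needs later: the proof of Theorem~\ref{thm:tail} applies \eqref{eq:GOT-Euler} at $s=-k+\sigma$.
\item Your step (3) does not reach this range. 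Also, ``comparing coefficients'' is not an instance of Lemma~\ref{lem:cauchy-coeff}, since you have a limit of Dirichlet series in $s$, not of power series.
\end{itemize}

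\textbf{How to close it.} Identify $P_{\Tam}(m)$ directly.
\begin{enumerate}
\item For finite $S$, the event $\{\Tam_S(E)=m\}$ is a finite union, over factorizations $m=\prod_{p\in S}t_p$, of local conditions. Proposition~\ref{prop:finite-prime} therefore gives its density.
\item Use $|\1_{\Tam(E)=m}-\1_{\Tam_S(E)=m}|\le\sum_{p\notin S}\1_{\tau_p(E)\notin\KK_0}$ together with Lemma~\ref{lem:uniform-large-prime-tail}, exactly as in the proof of Theorem~\ref{thm:master}. This shows that $P_{\Tam}(m)$ exists and equals $b_m:=\sum\prod_p\delta_p(t_p)$, the sum over finitely supported tuples $(t_p)$ with $\prod_pt_p=m$.
\item Tonelli then gives $\sum_m b_m m^{\sigma}=\prod_p\sum_t\delta_p(t)t^{\sigma}<\infty$ for every real $\sigma$.
\item Dominated convergence over growing finite $S$ gives \eqref{eq:GOT-Euler} for all complex $s$.
\end{enumerate}
Alternatively, once the identity is established on $\Reals(s)\ge0$, Landau's theorem for Dirichlet series with nonnegative coefficients, together with the fact that the right side is entire, forces convergence everywhere.

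\textbf{The tail estimate.} The ``main obstacle'' you identify is not one here, and the tool you propose would not suffice on its own. A geometric sieve handles only the codimension-two part $p\mid a_4$, $p\mid a_6$ of $\{p^2\mid\Delta\}$. Lemma~\ref{lem:uniform-large-prime-tail} shows that elementary counting already works uniformly for all $p\le(32X)^{1/2}$. For fixed $a_4$ with $p\nmid a_4$, the coefficient $a_6$ lies in at most two classes modulo $p^2$. Summing over $p$ therefore costs only $\NN(X)\sum_{p>Y}p^{-2}$ plus $O\bigl(X^{1/3}\pi((32X)^{1/2})\bigr)=O(X^{5/6}/\log X)=o(\NN(X))$. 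No splitting at $X^{1/6}$ is needed.
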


\begin{proof}
The explicit formulas are Lemma~3.1 of \cite{GOT}, aggregated via \eqref{eq:delta-collapse} from the densities for local reduction data in Tables~A1--A3 of \cite{GOT}. Identity \eqref{eq:GOT-Euler} is Theorem~1.1 of \cite{GOT}. For the case $p\ge 5$ and $t=4$, we use the corrected numerator $3p^2-2p+1$, as explained in Remark~\ref{rem:local-normalization}. For the case $p=3$ and $t\in\{1,2,3\}$, we use the corrected values explained in Remark~\ref{rem:three-adic-correction}.\qedhere
\end{proof}

\begin{remark}\label{rem:local-normalization}
For each prime $p$, the local densities satisfy
\[
\sum_{t\ge 1}\delta_p(t)=1.
\]
For $p=2,3$ this is immediate from the explicit geometric tails. For $p\ge 5$, this is a direct algebraic check from the formulas in Theorem~\ref{thm:GOT-local}; with the $t=4$ numerator written as $3p^2-2p+1$, multiplying the sum of the five displayed cases by the common denominator $6(p+1)^2(p^{10}-1)$ gives exactly $6(p+1)^2(p^{10}-1)$. Equivalently, the $t=4$ term is the sum of the $I_4$, $I_0^\ast$, and $I_n^\ast$ contributions in Table~5 of \cite{GOT}, and that sum simplifies to the displayed numerator $3p^2-2p+1$. In particular, the numerator $3p^2-2p-1$ printed in Lemma~3.1 of \cite{GOT} for this case is an error. The corresponding values in Table~\ref{tab:cutoff-stability} use this corrected local factor.
\end{remark}

\begin{remark}\label{rem:three-adic-correction}
The values of $\delta_3(1)$, $\delta_3(2)$, and $\delta_3(3)$ in Theorem~\ref{thm:GOT-local}(2) differ from those printed in Lemma~3.1(2) of \cite{GOT}, namely
\[
\frac{1924625}{2125728},\qquad \frac{510641}{6377184},\qquad \frac{7594}{597861},
\]
by $3/29524$, $-6/29524$, and $3/29524$ respectively. The discrepancy originates in Case~9(iii) of the analysis at $p=3$ in \cite[\S2.2]{GOT}, where the family
\[
a_4=-27t^2,\qquad a_6=54t^3+81s,\qquad 3\nmid st,
\]
with $v_3(a_4)=v_3(a_6)=3$ and $v_3(\Delta)=10$, is assigned Kodaira type $III^\ast$ with $c_3=2$. The translation $x\mapsto x+3t$ transforms these models into $y^2=x^3+9tx^2+81s$. In Tate's algorithm, the auxiliary cubic then has a triple root, and the auxiliary quadratic $Y^2-s\pmod 3$ has distinct roots since $s$ is a unit, so the type is $IV^\ast$, with $c_3=3$ if $s\equiv 1\pmod 3$ and $c_3=1$ if $s\equiv -1\pmod 3$. For instance, $y^2=x^3-27x+135$ has type $IV^\ast$ at $3$ with $c_3=3$, and $y^2=x^3-27x-27$ has type $IV^\ast$ with $c_3=1$. The affected coefficient classes are $a_4\equiv 54\pmod{81}$ and $a_6\equiv 27,108,135,216\pmod{243}$, of total Haar measure $4\cdot 3^{-9}$; the resulting change $(2,-4,2)\cdot 3^{-9}$ in the densities of $3$-minimal models becomes $(q,-2q,q)$ with $q=3/29524$ after the factor $(1-3^{-10})^{-1}$ accounting for non-minimal models. Since the shift sums to zero and preserves the first moment $\sum_{t\ge 1}t\,\delta_3(t)=262401/236192$, it is invisible both to the normalization check of Remark~\ref{rem:local-normalization} and to the value of $\E[\Tam(E)]$.
\end{remark}

\subsection{The \texorpdfstring{$\KK_0$}{K0} asymptotic}
Almost all of the mass of $\delta_p$ lies on the three data in $\KK_0$, none of which affects any of our statistics. This subsection shows that the remaining mass is $O(p^{-2})$.

The three local reduction data in
\[
\KK_0=\{I_0,\,I_1^{\mathrm{sp}},\,I_1^{\mathrm{ns}}\}
\]
all satisfy $c(K)=1$, $\Omega(c(K))=0$, and $v_\ell(c(K))=0$ for every $\ell$. The following asymptotic, which refines the scalar estimate $1-\delta_p(1)\ll p^{-2}$ of \cite{GOT}, is the mechanism that allows the master Euler product of Theorem~\ref{thm:master} to converge.

\begin{lemma}\label{lem:K0-asymptotic}
For every prime $p\ge 5$, we have
\begin{equation}\label{eq:K0-mass}
1-\beta_p=1-\delta_p(I_0)-\delta_p(I_1^{\mathrm{sp}})-\delta_p(I_1^{\mathrm{ns}})=\sum_{K\notin\KK_0}\delta_p(K)\ll \frac{1}{p^2}.
\end{equation}
Moreover, the individual $\KK_0$-densities satisfy
\begin{equation}\label{eq:K0-individual}
\delta_p(I_0)=1-O\!\left(\frac{1}{p}\right),\qquad
\delta_p(I_1^{\mathrm{sp}})\asymp \delta_p(I_1^{\mathrm{ns}})\asymp \frac{1}{p}.
\end{equation}
\end{lemma}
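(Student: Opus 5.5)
We will prove the lemma by computing the Haar measures of the relevant loci in $\Z_p^2$ directly from Tate's algorithm for $p\ge5$, where the short Weierstrass model is already adapted to it. We use three facts. Because $p\ge5$, $v_p(\Delta)=v_p(4a_4^3+27a_6^2)$. A model is non-minimal only if $p^4\mid a_4$ and $p^6\mid a_6$, a set of measure $p^{-10}$. The reduction type of a minimal model is governed by $(v_p(a_4),v_p(a_6),v_p(\Delta))$.

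\emph{The set $\KK\setminus\KK_0$.} A datum $K\notin\KK_0$ is either multiplicative of type $I_n$ with $n\ge2$, or additive. Additive reduction for a minimal model means $p\mid a_4$ and $p\mid a_6$, which has measure $p^{-2}$. After summing over the scaling orbits $(a_4,a_6)\mapsto(p^4a_4,p^6a_6)$, the contribution of non-minimal models is at most a factor $(1-p^{-10})^{-1}$ larger, so the additive mass is $\ll p^{-2}$. Multiplicative reduction of type $I_n$ means $p\nmid a_4a_6$ and $v_p(4a_4^3+27a_6^2)=n$. Fix a unit $a_4$. The polynomial $27Y^2+4a_4^3$ in $Y=a_6$ has either $0$ or $2$ simple roots in $\Z_p$, since its discriminant is a unit times $a_4^3$ and $p\ge5$. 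By Hensel's lemma, the set of $a_6$ with $v_p(\cdot)\ge n$ therefore has measure at most $2p^{-n}$. Integrating over $a_4$ gives measure $\ll p^{-n}$ for $I_{\ge n}$, so $I_{\ge2}$ contributes $\ll p^{-2}$. Adding the two pieces gives \eqref{eq:K0-mass}.

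\emph{The $\KK_0$-densities.} Good reduction is the complement of $\{p\mid\Delta\}$. That set has measure $\ll p^{-1}$: for each $a_4$, the congruence $4a_4^3+27a_6^2\equiv0$ has at most two solutions $a_6\bmod p$. Hence $\delta_p(I_0)=1-O(p^{-1})$. For the lower bound on $I_1$, count pairs $(a_4,a_6)\bmod p$ with $a_4\ne0$ and $-a_4^3/27\cdot 4$ a nonzero square $\bmod p$. These are exactly the $a_4=-3r^2$, giving $(p-1)/2$ values of $a_4$, each with two roots $a_6$. That makes $p-1$ residue classes out of $p^2$, so $I_{\ge1}$ has measure $\asymp p^{-1}$. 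Subtracting the $I_{\ge2}$ mass, which is $\ll p^{-2}$, shows $I_1$ has measure $\asymp p^{-1}$.

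\emph{Split versus nonsplit.} For the split/nonsplit refinement, note that at the node of $y^2=x^3+a_4x+a_6$ the tangent slopes are the square roots of $3x_0$, where $x_0$ is the double root. The curve is split iff $3x_0$ is a square $\bmod p$. Parametrize the singular cubics $\bmod p$ as $x^3+a_4x+a_6=(x-x_0)^2(x+2x_0)$ with $x_0\in\mathbb F_p^\times$. Then $a_4=-3x_0^2$ and $a_6=2x_0^3$, and the map $x_0\mapsto(a_4,a_6)$ is injective. As $x_0$ ranges over $\mathbb F_p^\times$, the quantity $3x_0$ is a square for exactly half the values. So each of split and nonsplit accounts for $(p-1)/2$ residue classes. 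Each class then lifts to $I_1$ for a proportion $1-O(p^{-1})$, by the Hensel argument above applied on the fiber. This gives $\delta_p(I_1^{\mathrm{sp}})\asymp\delta_p(I_1^{\mathrm{ns}})\asymp p^{-1}$.

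The main obstacle is bookkeeping rather than any single estimate. The densities are defined via $p$-minimal models, and all the implied constants must be uniform in $p$. We handle this with the uniform geometric factor $(1-p^{-10})^{-1}\le 2$ and the uniform Hensel count (at most two roots).

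Alternatively, one can take the explicit Table~A1 values of \cite{GOT} and sum them. For example, the $t\ge2$ densities of Theorem~\ref{thm:GOT-local} already give $1-\delta_p(1)\ll p^{-2}$. That route still requires the $I_n$ with $n\ge2$ having $c=1$ (the nonsplit, odd $n$ case) to be bounded by $p^{-n}$ as above, so we prefer the direct argument.
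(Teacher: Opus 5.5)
Your proof is correct and follows essentially the same route as the paper: you count the $p-1$ nodal residue classes $(-3x_0^2,2x_0^3)$, lift them using the simple roots of $27Y^2+4a_4^3$ (this is the paper's observation that $\partial\Delta/\partial a_6$ is a unit) to get $I_{\ge n}$ mass $\ll p^{-n}$, and absorb non-minimal models with the factor $(1-p^{-10})^{-1}$. The differences are minor: you work with bounds rather than the exact formula $\delta_p(I_n^{\mathrm{sp}})=\delta_p(I_n^{\mathrm{ns}})=\frac{(p-1)^2}{2p^{n+2}}(1-p^{-10})^{-1}$, you get the split/nonsplit halving by counting when $3x_0$ is a square instead of using the quadratic twist, and you justify the additive bound $O(p^{-2})$ via $p\mid a_4$, $p\mid a_6$, which the paper only asserts.
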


\begin{proof}
For every $n\ge 1$ and $p\ge 5$, the split and nonsplit multiplicative densities are
\[
\delta_p(I_n^{\mathrm{sp}})=\delta_p(I_n^{\mathrm{ns}})=\frac{(p-1)^2}{2p^{n+2}}\bigl(1-p^{-10}\bigr)^{-1}.
\]
Indeed, the nodal locus modulo $p$ consists of the $p-1$ residue pairs $(a_4,a_6)\equiv(-3u^2,2u^3)$ with $u\in\mathbb{F}_p^{\times}$. On each of them, the condition $v_p(\Delta)=n$ has relative density $(p-1)/p^{n}$, since $\partial\Delta/\partial a_6=-864a_6$ is a unit there. The quadratic twist $(a_4,a_6)\mapsto(\nu^2a_4,\nu^3a_6)$ by a quadratic non-residue $\nu$ is a measure-preserving bijection that preserves $v_p(\Delta)$ and exchanges the split and nonsplit branches. The factor $(1-p^{-10})^{-1}$ accounts for the non-minimal short models. The additive types contribute a total density $O(p^{-2})$. Summing the multiplicative densities over $n\ge 1$ gives the geometric series
\[
\sum_{n\ge 1}(\delta_p(I_n^{\mathrm{sp}})+\delta_p(I_n^{\mathrm{ns}}))
=
\frac{p-1}{p^2}\bigl(1-p^{-10}\bigr)^{-1}
=
\frac{1}{p}-\frac{1}{p^2}+O(p^{-11}),
\]
so the total bad-reduction density is $1/p+O(p^{-2})$, and \eqref{eq:K0-individual} follows. The $\KK_0$-mass estimate \eqref{eq:K0-mass} then comes from
\[
\sum_{K\notin\KK_0}\delta_p(K)
=
\sum_{n\ge 2}\bigl(\delta_p(I_n^{\mathrm{sp}})+\delta_p(I_n^{\mathrm{ns}})\bigr)+\delta_p(\text{additive types})
=O(p^{-2}),
\]
since the tail of the geometric series is $O(p^{-2})$ and the additive contribution is $O(p^{-2})$.
\end{proof}

\subsection{Local estimates}

The previous formulas imply the soft estimates that drive all of our convergence arguments.

\begin{lemma}\label{lem:local-estimates}
For primes $p\ge 5$ we have
\begin{equation}\label{eq:d1-estimate}
1-\delta_p(1)\ll \frac{1}{p^2}.
\end{equation}
More generally, for every fixed integer $k\ge 0$, we have
\begin{equation}\label{eq:moment-local-estimate}
\sum_{t\ge 1}\delta_p(t)t^k = 1+O_k\!\left(\frac{1}{p^2}\right)
\qquad (p\ge 5).
\end{equation}
Finally, let $\Lambda\subseteq\KK\setminus\KK_0$ be finite, let $\Pi$ be a finite set of primes, and suppose that $|u|\le 1$, $|w|\le 1$, $|z_\ell|\le 1$ for every $\ell\in\Pi$, and $|u_K|\le 1$ for every $K\in\Lambda$. Then we have
\begin{equation}\label{eq:bounded-local-estimate}
\beta_p+\sum_{K\in\KK\setminus\KK_0}\delta_p(K)\,\Phi_p(K;s,u,w,\mathbf z,\mathbf u)
=
1+O\!\left(\frac{1}{p^2}\right)
\end{equation}
for every $s$ with $\Reals(s)\ge 0$. Moreover, for every fixed prime $\ell$, we have
\begin{equation}\label{eq:Omega-local-estimate}
\sum_{t\ge 1}\delta_p(t)\Omega(t)\ll \frac{1}{p^2},
\qquad
\sum_{t\ge 1}\delta_p(t)v_\ell(t)\ll \frac{1}{p^2}
\qquad (p\ge 5),
\end{equation}
\begin{equation}\label{eq:second-moment-local-estimate}
\sum_{t\ge 1}\delta_p(t)\Omega(t)^2\ll \frac{1}{p^2},
\qquad
\sum_{t\ge 1}\delta_p(t)v_\ell(t)v_{\ell'}(t)\ll \frac{1}{p^2}
\qquad (p\ge 5),
\end{equation}
and for every fixed $K\in\KK\setminus\KK_0$,
\begin{equation}\label{eq:K-local-estimate}
\delta_p(K)\ll \frac{1}{p^2}
\qquad (p\ge 5).
\end{equation}
\end{lemma}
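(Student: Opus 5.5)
The plan is to derive every estimate from the explicit formulas of Theorem~\ref{thm:GOT-local}(3) and from Lemma~\ref{lem:K0-asymptotic}. The unifying fact is that for $p\ge5$ the scalar densities satisfy
\[
\delta_p(2)\asymp p^{-2},\quad \delta_p(3)\ll p^{-3},\quad \delta_p(4)\ll p^{-4},\quad \delta_p(t)\le p^{-t}\ (t\ge5).
\]
These follow by comparing the leading degrees of numerator and denominator: degree $8$ over $10$ for $t=2$, degree $6$ over $9$ for $t=3$, and degree $5$ over $9$ for $t=4$. For $t\ge5$ one has $\delta_p(t)=\tfrac{p^8(p-1)^2}{2p^t(p^{10}-1)}\le p^{-t}$. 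For \eqref{eq:d1-estimate}, either read off $1-\delta_p(1)$ directly (degree $8$ over degree $10$), or use the normalization of Remark~\ref{rem:local-normalization} to write $1-\delta_p(1)=\sum_{t\ge2}\delta_p(t)$ and apply the bounds above.

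For \eqref{eq:moment-local-estimate}, use the normalization to rewrite the left side as $1+\sum_{t\ge2}\delta_p(t)(t^k-1)$. Then bound
\[
\sum_{t\ge2}\delta_p(t)\,t^k\ll_k p^{-2}+\sum_{t\ge5}t^kp^{-t}\ll_k p^{-2},
\]
where the last step holds because $\sum_{t\ge5}t^k p^{-t}\ll_k p^{-5}$ uniformly for $p\ge5$ (the ratio of consecutive terms is at most $(6/5)^k/5$, which is bounded by a constant less than $1$ after a few terms). The estimates \eqref{eq:Omega-local-estimate} and \eqref{eq:second-moment-local-estimate} are handled the same way. Since $\Omega(1)=v_\ell(1)=0$, only $t\ge2$ contributes. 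The weights are bounded by $\Omega(t)^2\le(\log_2t)^2\le t^2$ and $v_\ell(t)v_{\ell'}(t)\le t^2$, so each sum is at most the $k=2$ case of the tail bound above.

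For \eqref{eq:bounded-local-estimate}, since $\beta_p+\sum_{K\notin\KK_0}\delta_p(K)=1$, the quantity minus $1$ equals $\sum_{K\notin\KK_0}\delta_p(K)(\Phi_p(K)-1)$. On the stated domain every factor in \eqref{eq:Phi-def} has modulus at most $1$. This uses $|c(K)^{-s}|=c(K)^{-\Reals s}\le1$, and for the $\mathbf u$-factor the fact that $K$ equals at most one $K'\in\Lambda$. Hence $|\Phi_p(K)-1|\le2$, and the difference is bounded by $2(1-\beta_p)\ll p^{-2}$ by \eqref{eq:K0-mass}, uniformly in the parameters. Finally, \eqref{eq:K-local-estimate} is immediate from $\delta_p(K)\le\sum_{K'\notin\KK_0}\delta_p(K')$ and Lemma~\ref{lem:K0-asymptotic}.

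The only real care needed is uniformity. The implied constants must be independent of $p$, and independent of $s$ and the other variables on the closed domain. This means the $t\ge5$ tail bounds must be made uniform in $p\ge5$ via the explicit geometric form, and \eqref{eq:bounded-local-estimate} must be routed through the crude bound $|\Phi_p|\le1$ rather than through the scalar moments. I expect no genuine obstacle beyond checking the degree comparisons in the explicit rational functions.
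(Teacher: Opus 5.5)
Your proposal is correct and follows essentially the same route as the paper: degree comparison in the explicit formulas of Theorem~\ref{thm:GOT-local} (with the geometric $t\ge5$ tail) for \eqref{eq:d1-estimate} and \eqref{eq:moment-local-estimate}, the majorants $v_\ell(t),\Omega(t)\le t$ together with $\Omega(1)=v_\ell(1)=0$ for the valuation moments, and the bound $|\Phi_p(K)-1|\le 2$ against $1-\beta_p\ll p^{-2}$ from Lemma~\ref{lem:K0-asymptotic} for \eqref{eq:bounded-local-estimate} and \eqref{eq:K-local-estimate}. Using the normalization $\sum_t\delta_p(t)=1$ instead of the paper's subtraction of $\delta_p(1)$ is only a cosmetic difference. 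For the tail, write it as $\sum_{t\ge5}t^kp^{-t}\le p^{-5}\sum_{j\ge0}(j+5)^k5^{-j}$, because your stated ratio bound $(6/5)^k/5$ exceeds $1$ once $k\ge9$.
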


\begin{proof}
For \eqref{eq:d1-estimate}, inspect the explicit formula for $\delta_p(1)$ in Theorem~\ref{thm:GOT-local}. The numerator has degree $8$ in $p$ and the denominator has degree $10$, so the difference $1-\delta_p(1)$ is $O(p^{-2})$.

For \eqref{eq:moment-local-estimate}, split the sum into $t=1,2,3,4$ and $t\ge 5$. The $t=1$ term equals $1+O(p^{-2})$, while the terms with $t=2,3,4$ are each $O_k(p^{-2})$ because the corresponding formulas are visibly $O(p^{-2})$. For the tail, Theorem~\ref{thm:GOT-local} gives
\[
\delta_p(t)=\frac{p^8(p-1)^2}{2(p^{10}-1)}\cdot p^{-t}
\ll p^{-t}
\qquad (t\ge 5),
\]
so
\[
\sum_{t\ge 5}\delta_p(t)t^k
\ll
\sum_{t\ge 5}\frac{t^k}{p^t}
\ll_k \frac{1}{p^5}.
\]
This proves \eqref{eq:moment-local-estimate}.

For \eqref{eq:bounded-local-estimate}, recall from \eqref{eq:Phi-def} that $\Phi_p(K)=1$ for every $K\in\KK_0$, so the $\KK_0$ contribution is exactly $\beta_p$ and cancels the leading $1$ up to $1-\beta_p\ll p^{-2}$ by Lemma~\ref{lem:K0-asymptotic}. The hypotheses on the parameters give $|\Phi_p(K)|\le 1$ for every $K$, so
\[
\left|\beta_p+\sum_{K\notin\KK_0}\delta_p(K)\Phi_p(K)-1\right|
=
\left|\sum_{K\notin\KK_0}\delta_p(K)\bigl(\Phi_p(K)-1\bigr)\right|
\le
2(1-\beta_p)
\ll
\frac{1}{p^2}.
\]
This proves \eqref{eq:bounded-local-estimate}.

For \eqref{eq:Omega-local-estimate}, note that $\Omega(1)=v_\ell(1)=0$ and, for every $t\ge 2$,
\[
0\le v_\ell(t)\le \Omega(t)\le t.
\]
Hence
\[
\sum_{t\ge 1}\delta_p(t)\Omega(t)
\le
\sum_{t\ge 2}\delta_p(t)t
=
\left(\sum_{t\ge 1}\delta_p(t)t\right)-\delta_p(1)
=O(p^{-2})
\]
by \eqref{eq:moment-local-estimate} with $k=1$, and the same bound follows for $\sum_t\delta_p(t)v_\ell(t)$.

For \eqref{eq:second-moment-local-estimate}, use $v_\ell(t)v_{\ell'}(t)\le \Omega(t)^2\le t^2$ for $t\ge1$ and $\Omega(1)=0$. Thus
\[
\sum_{t\ge1}\delta_p(t)\Omega(t)^2
\le
\sum_{t\ge2}\delta_p(t)t^2
=
\sum_{t\ge1}\delta_p(t)t^2-\delta_p(1)
=O(p^{-2})
\]
by \eqref{eq:moment-local-estimate} with $k=2$, together with \eqref{eq:d1-estimate}. The mixed valuation bound follows immediately.

Finally, \eqref{eq:K-local-estimate} is immediate from Lemma~\ref{lem:K0-asymptotic}, since $\delta_p(K)\le \sum_{K'\notin\KK_0}\delta_p(K')\ll p^{-2}$.
\end{proof}

\subsection{Equidistribution in residue classes}

We shall need the elementary fact that the height boxes become equidistributed modulo every fixed modulus.

\begin{lemma}\label{lem:equidistribution}
Let $M\ge 1$ be fixed. For every residue class $(r,s)\in (\Z/M\Z)^2$, we have
\[
\lim_{X\to\infty}
\frac{1}{\NN(X)}
\#\Bigl\{(a_4,a_6)\in\Z^2:
\begin{array}{l}
\Ht(E(a_4,a_6))\le X,\ \Delta(E)\neq 0,\\
(a_4,a_6)\equiv (r,s)\!\!\pmod M
\end{array}
\Bigr\}
=
\frac{1}{M^2}.
\]
More generally, if $A\subseteq (\Z/M\Z)^2$, then the density of curves whose coefficient pair lies in $A$ modulo $M$ exists and equals $|A|/M^2$.
\end{lemma}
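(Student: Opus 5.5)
The plan is a lattice-point count in the expanding region
\[
R(X)=\{(x,y)\in\mathbb R^2: 4|x|^3\le X,\ 27y^2\le X\},
\]
a rectangle of side lengths $2(X/4)^{1/3}$ and $2(X/27)^{1/2}$. Its area is therefore $\mathrm{vol}(R(X))=c\,X^{5/6}$ with $c=4\cdot 4^{-1/3}27^{-1/2}$. Both side lengths tend to infinity with $X$, and this is what drives the argument.

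First, I will count all integer points with $\Ht\le X$ lying in a fixed residue class, and only afterwards remove the singular ones.

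\begin{itemize}
\item The integer $a_4$ ranges over an interval $I_4$ of length $L_4=2(X/4)^{1/3}$. The number of $a_4\in I_4$ with $a_4\equiv r\pmod M$ is $L_4/M+O(1)$.
\item Likewise, $a_6$ ranges over an interval $I_6$ of length $L_6=2(X/27)^{1/2}$. The number of $a_6\in I_6$ with $a_6\equiv s\pmod M$ is $L_6/M+O(1)$.
\item Because the region is a product of intervals, the count factors. The number of pairs in the class $(r,s)$ is
\[
\Bigl(\frac{L_4}{M}+O(1)\Bigr)\Bigl(\frac{L_6}{M}+O(1)\Bigr)=\frac{L_4L_6}{M^2}+O(L_4+L_6)=\frac{cX^{5/6}}{M^2}+O(X^{1/2}),
\]
with the implied constant independent of the class.
\end{itemize}

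Second, I will discard the singular pairs with $\Delta=0$, that is, $4a_4^3=-27a_6^2$. Over the integers this forces $a_4=-3n^2$ and $a_6=2n^3$. Such a pair has height $\Ht=108n^6$, so at most $O(X^{1/6})$ of them lie in $R(X)$, which is negligible.

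Third, I will assemble the density statements.

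\begin{itemize}
\item The total count is $\NN(X)=cX^{5/6}+O(X^{1/2})$. This is the sum over all $M^2$ classes, or equivalently the case $M=1$.
\item Taking the ratio, the density of the class $(r,s)$ is $\frac{1}{M^2}+O_M(X^{-1/3})\to 1/M^2$.
\item For a subset $A\subseteq(\Z/M\Z)^2$, summing over the finitely many classes in $A$ gives density $|A|/M^2$.
\end{itemize}

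There is no real obstacle here. The only point needing care is that the error terms are uniform in the residue class. This holds because each one-dimensional count is exact up to an additive constant of at most $1$ at each endpoint, so all errors are bounded independently of $(r,s)$. The removal of the singular locus is the only place the arithmetic enters, and it is handled by the explicit parametrization above.
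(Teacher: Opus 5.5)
Your proof is correct and follows the same route as the paper: you factor the count over the product of intervals $|a_4|\le (X/4)^{1/3}$, $|a_6|\le (X/27)^{1/2}$, obtain $\frac{cX^{5/6}}{M^2}+O(X^{1/2})$ uniformly in the residue class, and discard the singular pairs as a lower-order term. The only difference is cosmetic: you parametrize the singular locus exactly as $(a_4,a_6)=(-3n^2,2n^3)$ to get $O(X^{1/6})$ singular pairs, whereas the paper notes that each $a_6$ admits at most one $a_4$, which gives the cruder but sufficient bound $O(X^{1/2})$.
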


\begin{proof}
We let
\[
A_X:=\left\lfloor\left(\frac{X}{4}\right)^{1/3}\right\rfloor
\qquad \text{and}\qquad
B_X:=\left\lfloor\left(\frac{X}{27}\right)^{1/2}\right\rfloor.
\]
Then the condition $\Ht(E(a_4,a_6))\le X$ is equivalent to $|a_4|\le A_X$ and $|a_6|\le B_X$. Hence the total number of integer pairs in the box is
\[
(2A_X+1)(2B_X+1)=\frac{4}{4^{1/3}27^{1/2}}X^{5/6}+O(X^{1/2}+X^{1/3}).
\]
The singular pairs satisfy $4a_4^3+27a_6^2=0$. For each $a_6$ there is at most one possible value of $a_4$, so the number of singular pairs in the box is $O(B_X)=O(X^{1/2})$. Therefore
\[
\NN(X)=(2A_X+1)(2B_X+1)+O(X^{1/2})
=\frac{4}{4^{1/3}27^{1/2}}X^{5/6}+o(X^{5/6}).
\]
Now fix $(r,s)\bmod M$. The number of pairs in the box satisfying these congruence conditions is
\[
\frac{2A_X+1}{M}\cdot \frac{2B_X+1}{M}+O(A_X+B_X+1)
=
\frac{(2A_X+1)(2B_X+1)}{M^2}+o(X^{5/6}),
\]
and again removing the singular pairs changes the count by only $O(X^{1/2})=o(X^{5/6})$. Dividing by $\NN(X)$ gives the claim for a single residue class, and summing over residue classes in $A$ proves the general statement.
\end{proof}

\subsection{A uniform large-prime tail estimate}

To pass from finitely many local conditions to the full Euler product, we need a tail estimate that is uniform in $X$.

\begin{lemma}\label{lem:uniform-large-prime-tail}
For every prime $p\ge 5$ and every $X\ge 4$, we have
\[
\frac{1}{\NN(X)}
\#\{E(a_4,a_6): \Ht(E)\le X,\ \tau_p(E)\notin\KK_0\}
\ll
\frac{1}{p^2}+\frac{1}{pX^{1/3}}+\frac{1}{X^{1/2}}.
\]
Consequently, for every real number $Y\ge 5$, we have
\[
\limsup_{X\to\infty}
\frac{1}{\NN(X)}
\sum_{\Ht(E)\le X}\sum_{p>Y}\1_{\tau_p(E)\notin\KK_0}
\ll
\sum_{p>Y}\frac{1}{p^2},
\]
and the right-hand side tends to $0$ as $Y\to\infty$.
\end{lemma}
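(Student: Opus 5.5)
The plan is to reduce the condition $\tau_p(E)\notin\KK_0$ to a congruence condition modulo $p^2$ and count lattice points in the height box directly, uniformly in $X$. The first step is to show that, for $p\ge5$, if $\tau_p(E)\notin\KK_0$ then the reduced pair $(a_4,a_6)\bmod p^2$ lies in an explicit set $S_p\subseteq(\Z/p^2\Z)^2$ with $|S_p|\ll p^2$, so that $|S_p|/p^4\ll p^{-2}$. There are three cases. If $E(a_4,a_6)$ is non-minimal at $p$, then $p^4\mid a_4$ and $p^6\mid a_6$, so $(a_4,a_6)\equiv(0,0)\pmod{p^2}$. If the reduction is additive, then $p\mid a_4$ and $p\mid a_6$, which gives $p^2$ classes mod $p^2$. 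If the reduction is multiplicative of type $I_n$ with $n\ge2$, then $(a_4,a_6)$ reduces to one of the $p-1$ nodal points $(-3u^2,2u^3)$, and $p^2\mid 4a_4^3+27a_6^2$. As in the proof of Lemma~\ref{lem:K0-asymptotic}, $\partial\Delta/\partial a_6$ is a unit at these points, so by Hensel's lemma each of the $p$ lifts of $a_4$ mod $p^2$ and each nodal residue admits exactly one lift of $a_6$ mod $p^2$. This gives $(p-1)p$ classes. Altogether $|S_p|\le 1+p^2+p^2\ll p^2$.

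The second step is to count pairs $(a_4,a_6)$ with $|a_4|\le A_X$, $|a_6|\le B_X$ in a fixed class mod $p^2$. Rather than one lattice count with modulus $p^2$ in both coordinates, which would give error terms $A_X/p^2$ and $B_X/p^2$ that are too weak when $p$ is large, I would use the product structure of the conditions. The additive case is $p\mid a_4$ and $p\mid a_6$, and its count is at most $(2A_X/p+1)(2B_X/p+1)$. After dividing by $\NN(X)\asymp A_XB_X\asymp X^{5/6}$, this is $\ll p^{-2}+1/(pB_X)+1/(pA_X)+1/(A_XB_X)$, and each of these is bounded by the claimed quantity, since $A_X\asymp X^{1/3}$ and $B_X\asymp X^{1/2}$. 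For the $I_{\ge2}$ case, fix $a_4$ in one of the $p-1$ residue classes mod $p$ congruent to $-3u^2$. That gives at most $2A_X(p-1)/p+p$ choices of $a_4$ (using $|a_4|\le A_X$ and counting by class), and for each such $a_4$ the admissible $a_6$ lie in $O(1)$ classes mod $p^2$ (at most two values of $u$ per $a_4$). So there are $\ll B_X/p^2+1$ values of $a_6$. The count is then $\ll (A_X+p)(B_X/p^2+1)$.

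The main obstacle is this term when $p$ is large compared to $A_X$ or $B_X$. Here I would use the trivial bound in the other order: fix $a_6$, which has $\ll B_X/p+1$ choices in classes $2u^3$, and note that $a_4$ is then determined mod $p^2$ up to $O(1)$ choices, giving $\ll A_X/p^2+1$ values. Taking the better of the two orderings, together with the trivial bound $1$, in each range of $p$ relative to $X^{1/3}$ and $X^{1/2}$ yields $\ll A_XB_X(p^{-2}+1/(pA_X)+1/B_X)$. For example, $(B_X/p+1)(A_X/p^2+1)$ expands to $A_XB_X/p^3+B_X/p+A_X/p^2+1$, and after dividing by $A_XB_X$ each term fits the bound. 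The non-minimal case is contained in the additive case. This proves the first display.

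For the consequence, sum the first bound over $Y<p\le P_X$. Only primes dividing $\Delta\neq0$ can contribute, and $|\Delta|\ll X$, so $p\ll X$. The sum of $1/(pX^{1/3})$ over $p\ll X$ is $\ll (\log\log X)/X^{1/3}\to0$. The term $X^{-1/2}$ summed over $p\ll X$ is too large, so I would handle it differently. That term arises from the choices $a_6=O(1)$; instead, bound the number of curves with some prime $p>X^{1/2}$ in $S_p$ by the $I_{\ge2}$ or additive condition $p^2\mid\Delta$ (for additive reduction, $p\mid a_4,a_6$ also forces $p^2\mid\Delta$). Each nonzero $\Delta$ with $|\Delta|\ll X$ has $O(1)$ prime factors $p>X^{1/2}$ with $p^2\mid\Delta$, and the number of pairs with $|\Delta|$ divisible by $p^2$ for large $p$ is controlled by the fiber count as above. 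This contributes $o(1)$, leaving $\limsup\ll\sum_{p>Y}p^{-2}$, which tends to $0$.
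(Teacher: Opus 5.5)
Your approach is essentially the paper's. Step~1 is a correct and more explicit version of the paper's appeal to Tate's algorithm: your $S_p$ lies inside $\{4a_4^3+27a_6^2\equiv 0 \pmod{p^2}\}$, which is all the paper uses. Your additive versus $I_{\ge2}$ split is the paper's split $p\mid a_4$ versus $p\nmid a_4$. Two steps, however, do not work as written.

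\textbf{The $I_{\ge2}$ count.} The large-$p$ obstacle is self-inflicted, and your proposed remedy is miscounted.

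The number of admissible $a_4$ is trivially at most $2A_X+1$. Fixing $a_4$ therefore gives $\ll (2A_X+1)(B_X/p^2+1)\ll A_XB_X/p^2+A_X$, which is $\ll p^{-2}+X^{-1/2}$ after dividing by $\NN(X)$, uniformly in $p$. This is exactly the paper's estimate, and no second ordering is needed.

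The reverse ordering is actually worse. The residues $2u^3$ with $u\in\mathbb F_p^\times$ fill $\asymp p$ classes mod $p$ (all units if $p\equiv 2\pmod 3$). So there are about $B_X$ admissible $a_6$, not $O(B_X/p+1)$. Your product $(B_X/p+1)(A_X/p^2+1)$ omits the sum over these classes. Counted correctly, that ordering yields only $\ll A_XB_X/p^2+B_X$. This contains a term $1/A_X\asymp X^{-1/3}$, which exceeds the claimed bound when $p\asymp X^{1/2}$.

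\textbf{The consequence.} You use only $p\mid\Delta$, which gives $p\ll X$ and forces an awkward treatment of the $X^{-1/2}$ term. But your Step~1 already gives $p^2\mid\Delta$ whenever $\tau_p(E)\notin\KK_0$. Since $|\Delta|\le 32X$, only primes $p\le(32X)^{1/2}$ contribute.

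Summing the first bound over $Y<p\le(32X)^{1/2}$ then gives $\sum_{p>Y}p^{-2}+O(X^{-1/3}\log\log X)+O\bigl(\pi((32X)^{1/2})X^{-1/2}\bigr)$, and the last term is $O(1/\log X)$. That is the paper's argument.

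Your split at $X^{1/2}$, with the remark about $O(1)$ large prime factors, works only because of this same range restriction. Without it, summing the per-prime term $X^{-1/2}$ over all $p\ll X$ gives $\gg X^{1/2}/\log X$. Once the restriction is stated, the split is superfluous.

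\textbf{Small $X$.} For $4\le X<27$ one has $B_X=0$, so dividing by $A_XB_X$ is not legitimate there. The paper handles this range directly: only $y^2=x^3\pm x$ occur, and both have good reduction at every $p\ge5$.
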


\begin{proof}
If $4\le X<27$, the only nonsingular models in the height box are $y^2=x^3\pm x$, whose discriminants are $\mp64$. Both have good reduction at every prime $p\ge5$, so the proportion in the first assertion is zero. We may therefore assume that $X\ge27$.

Write
\[
A_X:=\left\lfloor\left(\frac{X}{4}\right)^{1/3}\right\rfloor
\qquad {\text {\rm and}}\qquad
B_X:=\left\lfloor\left(\frac{X}{27}\right)^{1/2}\right\rfloor,
\]
and let
\[
N_p(X):=\#\{E(a_4,a_6): \Ht(E)\le X,\ \tau_p(E)\notin\KK_0\}.
\]
If $p\ge 5$ and $\tau_p(E)\notin\KK_0$, then Tate's algorithm shows that the $p$-minimal discriminant has valuation at least $2$. Since the discriminant of the original integral short Weierstrass equation differs from the minimal discriminant by a factor with $p$-adic valuation divisible by $12$, this implies $v_p(\Delta(E))\ge 2$. Indeed, if the minimal discriminant valuation is at most $1$, then $\tau_p(E)$ is $I_0$, $I_1^{\mathrm{sp}}$ or $I_1^{\mathrm{ns}}$, that is, $\tau_p(E)\in\KK_0$. Thus, every curve counted by $N_p(X)$ satisfies
\[
p^2\mid \Delta(E)=-16(4a_4^3+27a_6^2),
\]
so it is enough to count pairs $(a_4,a_6)$ in the height box for which
\[
4a_4^3+27a_6^2\equiv 0 \pmod{p^2}.
\]

Fix $a_4$ with $|a_4|\le A_X$. If $p\nmid a_4$, then
\[
27a_6^2\equiv -4a_4^3 \pmod{p^2}
\]
has at most two solutions for $a_6\bmod p^2$, because a nonzero residue modulo $p^2$ has at most two square roots. Hence, the number of such integers $a_6$ with $|a_6|\le B_X$ is
\[
\ll \frac{B_X}{p^2}+1.
\]
If instead $p\mid a_4$, then the same congruence forces $p\mid a_6$, so the number of admissible integers $a_6$ is
\[
\ll \frac{B_X}{p}+1.
\]
Since the number of integers $a_4$ with $|a_4|\le A_X$ and $p\mid a_4$ is $\ll A_X/p+1$, it follows that
\[
N_p(X)
\ll
(A_X+1)\left(\frac{B_X}{p^2}+1\right)
+
\left(\frac{A_X}{p}+1\right)\left(\frac{B_X}{p}+1\right).
\]
Therefore, we find that
\[
N_p(X)
\ll
\frac{A_XB_X}{p^2}+A_X+\frac{B_X}{p}+1.
\]
By Lemma~\ref{lem:equidistribution} we have $\NN(X)\asymp A_XB_X$, with $A_X\asymp X^{1/3}$ and $B_X\asymp X^{1/2}$, so dividing through by $\NN(X)$ gives
\[
\frac{N_p(X)}{\NN(X)}
\ll
\frac{1}{p^2}+\frac{1}{pX^{1/3}}+\frac{1}{X^{1/2}}.
\]

For the second statement, note that $\Ht(E)\le X$ implies
\[
|4a_4^3+27a_6^2|\le X+X=2X,
\]
and hence we obtain
\[
|\Delta(E)|\le 32X.
\]
Therefore, if $\tau_p(E)\notin\KK_0$, then $p^2\mid \Delta(E)\neq 0$, so necessarily $p\le (32X)^{1/2}$. Summing the first bound over primes $Y<p\le (32X)^{1/2}$, we obtain
\[
\frac{1}{\NN(X)}
\sum_{\Ht(E)\le X}\sum_{p>Y}\1_{\tau_p(E)\notin\KK_0}
\ll
\sum_{Y<p\le (32X)^{1/2}}\frac{1}{p^2}
+
\frac{1}{X^{1/3}}\sum_{Y<p\le (32X)^{1/2}}\frac{1}{p}
+
\frac{\pi((32X)^{1/2})}{X^{1/2}}.
\]
The second term is $O(X^{-1/3}\log X)$, and the third is $O(1/\log X)$ by the standard estimate $\pi(T)\ll T/\log T$. Taking the limit superior as $X\to\infty$ gives
\[
\limsup_{X\to\infty}
\frac{1}{\NN(X)}
\sum_{\Ht(E)\le X}\sum_{p>Y}\1_{\tau_p(E)\notin\KK_0}
\ll
\sum_{p>Y}\frac{1}{p^2},
\]
and the right-hand side tends to $0$ as $Y\to\infty$.
\end{proof}

\subsection{Finite-prime factorization}

We now isolate the finite-prime multiplicativity that underlies the Euler product in \cite{GOT}, recast at the level of local reduction data.

\begin{lemma}\label{lem:local-constancy}
Fix a prime $p$, and let
\[
U_p:=\{(a_4,a_6)\in\Z_p^2:\Delta(a_4,a_6)\neq 0\}.
\]
Then the map
\[
(a_4,a_6)\longmapsto \tau_p(E(a_4,a_6))
\]
is locally constant on $U_p$. Here $\tau_p$ denotes the local reduction datum, including the finite splitting and component-group data determining $c_p$. Consequently, for every bounded function $f:\KK\to\C$, the composition $f(\tau_p(\cdot))$ is bounded and continuous on $U_p$. The complement $\Z_p^2\setminus U_p$ has Haar measure zero.
\end{lemma}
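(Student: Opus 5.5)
The plan is to show that $\tau_p$ depends only on $(a_4,a_6)$ modulo a sufficiently high power of $p$, with the power allowed to vary with the point. Fix $(a_4,a_6)\in U_p$ and put $n:=v_p(\Delta(a_4,a_6))<\infty$. The map $\Delta$ is a polynomial with integer coefficients, so on the ball $B=(a_4,a_6)+p^{N}\Z_p^2$ with $N>n$ we have $\Delta(a_4',a_6')\equiv\Delta(a_4,a_6)\pmod{p^{N}}$. Hence $v_p(\Delta)=n$ is constant on $B$, and in particular $B\subseteq U_p$.

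\emph{Step 1 (minimal model).} The next task is to make the passage to a $p$-minimal model uniform on a small ball. First handle $p\ge5$. Any change of variables between short Weierstrass models has the form $(a_4,a_6)\mapsto(u^{-4}a_4,u^{-6}a_6)$, possibly after a translation if $p\le3$. For $p\ge5$ a $p$-minimal model is therefore obtained by dividing by the largest $p^{4k}$ and $p^{6k}$ with $p^{4k}\mid a_4$, $p^{6k}\mid a_6$. Here $12k\le n$, so $k$ is determined by the valuations of $a_4,a_6$, truncated at $n$. These truncated valuations are constant on $B$ once $N>n$. The scaled model then varies over a ball of radius $p^{-(N-6k)}$ with $N-6k>n-12k$, which is the new discriminant valuation. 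So we are reduced to minimal models with fixed $v_p(\Delta_{\min})$ and with coefficients determined modulo a power of $p$ exceeding that valuation.

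For $p=2,3$, use the same idea together with the standard fact that Tate's algorithm terminates after finitely many steps bounded in terms of $n$. Each step consists of substitutions with coordinates in $\Z_p$ and tests of valuations of polynomial expressions in the current coefficients. These tests include whether certain residue polynomials over $\mathbb F_p$ have roots, repeated roots, or split.

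\emph{Step 2 (Tate's algorithm is locally constant).} Run Tate's algorithm on $(a_4,a_6)$. Every branch decision tests $v_p$ of an integer polynomial in the (transformed) coefficients against thresholds at most about $n+O(1)$, or inspects the reduction mod $p$ of such polynomials. This covers the number of $\mathbb F_p$-roots of the auxiliary quadratic or cubic and whether the tangent slopes are rational, which gives split/nonsplit and the component group. The substitutions chosen along the way, such as translating $x$ by a lift of a root, can be taken to be the same integral substitution for all points of a sufficiently small ball. This works because the relevant residues and valuations agree on the ball.

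Therefore, if $N$ exceeds $n$ plus a constant depending only on $p$, every test gives the same answer throughout $B$. The output $\tau_p$ (Kodaira symbol, including the index $m$ of $I_m$ or $I_m^\ast$, which is read off from $n$; the splitting data; and the $\mathbb F_p$-component group) is then constant on $B$.

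This uniformity of the algorithm's branch decisions is the main obstacle. I would handle it by an induction over the steps of the algorithm: all quantities it computes are polynomials over $\Z$ in the input, divided by known powers of $p$, and each step needs at most a bounded increase in precision. Since we only need pointwise local constancy, not uniformity in $(a_4,a_6)$, no explicit bounds are required.

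\emph{Step 3 (consequences).} For bounded $f$, the composition $f\circ\tau_p$ is locally constant on $U_p$, hence continuous, and it is bounded by $\sup|f|$. Finally, $\Z_p^2\setminus U_p$ is the zero set of the nonzero polynomial $4a_4^3+27a_6^2$. For each $a_6$ there are at most three $a_4$ solving it, so by Fubini each fibre is finite and has measure zero, and therefore the whole set has Haar measure zero.
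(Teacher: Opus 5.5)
Your proposal is correct and follows essentially the same route as the paper. Local constancy comes from Tate's algorithm making only finitely many valuation and residue tests, each stable on a small enough $p$-adic ball, and the measure-zero claim is the same Fubini argument on $4a_4^3+27a_6^2=0$. Your version makes the paper's brief assertion more explicit by freezing $v_p(\Delta)$ on a ball, spelling out the minimal-model rescaling for $p\ge 5$, and bounding the test thresholds by $n+O(1)$.

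One harmless slip: at $p=2$ the index $m$ of $I_m^\ast$ is not determined by $v_p(\Delta_{\min})$ alone, because the conductor exponent can vary. It is, however, the loop count of the algorithm, so it is locally constant by your own Step~2 and nothing breaks.
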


\begin{proof}
Fix $x=(a_4,a_6)\in U_p$. Tate's algorithm, applied to the $p$-integral Weierstrass equation determined by $x$, reaches the Kodaira symbol, the relevant splitting alternatives, and the component-group size after finitely many steps. Each step examines only finitely many valuations and residue conditions on the current coefficients and on the auxiliary polynomials appearing in the multiplicative and additive cases; see \cite[Chapter IV, \S9]{SilvermanAdvanced}, \cite{Tate}, and the case-by-case analysis in Section~2 of \cite{GOT}. Once the relevant case and the corresponding splitting tests are fixed, all of those finitely many tests remain unchanged on a sufficiently small $p$-adic neighborhood of $x$. Hence the local reduction datum $\tau_p$ is constant on that neighborhood. This proves local constancy on $U_p$.

Finally, the singular locus $\Delta=0$ has Haar measure zero in $\Z_p^2$: for each fixed $a_6\in\Z_p$, the equation $4a_4^3+27a_6^2=0$ has at most three solutions in $a_4$, so Fubini's theorem gives measure zero; see \cite[Chapter~8]{RudinRCA}.
\end{proof}

\begin{proposition}\label{prop:finite-prime}
Let $S$ be a finite set of primes. For each $p\in S$, let $f_p:\KK\to\C$ be bounded. Then the natural density
\[
\lim_{X\to\infty}\frac{1}{\NN(X)}\sum_{\Ht(E)\le X}\prod_{p\in S} f_p(\tau_p(E))
\]
exists and equals
\begin{equation}\label{eq:finite-prime-factorization}
\prod_{p\in S}\left(\sum_{K\in\KK}\delta_p(K)f_p(K)\right).
\end{equation}
\end{proposition}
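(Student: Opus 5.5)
The plan is to approximate the bounded locally constant functions $f_p\circ\tau_p$ by functions depending only on $(a_4,a_6)$ modulo a fixed power of each $p\in S$. Then apply Lemma~\ref{lem:equidistribution} with modulus $M=\prod_{p\in S}p^{n_p}$, and let the $n_p$ tend to infinity.

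\textbf{Step 1: truncation.} Fix $p\in S$ and $n\ge1$. Let $G_{p,n}\subseteq(\Z/p^n\Z)^2$ be the set of residue classes on whose full $p$-adic lift $\tau_p$ is constant. By Lemma~\ref{lem:local-constancy}, every point of $U_p$ lies in some such class for $n$ large. Since $\Z_p^2\setminus U_p$ is Haar-null, the measure of the union of the bad classes $B_{p,n}=(\Z/p^n\Z)^2\setminus G_{p,n}$ tends to $0$, i.e. $|B_{p,n}|/p^{2n}\to0$ as $n\to\infty$. On good classes we define $g_{p,n}$ as the constant value of $f_p\circ\tau_p$, and on bad classes we set $g_{p,n}=0$. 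An integer pair reducing to a good class has $\tau_p(E)$ equal to the constant value, because the integer pair is itself a point of the lift. Hence $f_p(\tau_p(E))=g_{p,n}(a_4,a_6\bmod p^n)$ unless the pair lies in a bad class.

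\textbf{Step 2: the main term.} The product $\prod_{p\in S}g_{p,n}$ is a function of $(a_4,a_6)\bmod M$ with $M=\prod_{p\in S}p^n$. By Lemma~\ref{lem:equidistribution} and the Chinese remainder theorem, its average over the height box converges to
\[
\frac{1}{M^2}\sum_{x\bmod M}\prod_{p\in S}g_{p,n}(x)=\prod_{p\in S}\Bigl(p^{-2n}\sum_{x\bmod p^n}g_{p,n}(x)\Bigr).
\]
This factorization holds because $(\Z/M\Z)^2=\prod_p(\Z/p^n\Z)^2$ and uniform measure is a product measure. Each factor differs from $\int_{\Z_p^2}f_p(\tau_p)\,d\mu_p=\sum_K\delta_p(K)f_p(K)$ by at most $\|f_p\|_\infty\,|B_{p,n}|/p^{2n}$. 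The identification of the integral with the sum uses the definition of $\delta_p(K)$ as $\mu_p(\tau_p^{-1}(K))$, via countable additivity and bounded $f_p$.

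\textbf{Step 3: the error.} The difference between $\prod_p f_p(\tau_p(E))$ and $\prod_p g_{p,n}$ is bounded by $C\sum_{p\in S}\1[(a_4,a_6)\bmod p^n\in B_{p,n}]$, where $C=\prod\max(1,\|f_p\|_\infty)$. By Lemma~\ref{lem:equidistribution} again, the $\limsup$ of its average is at most $C\sum_{p\in S}|B_{p,n}|/p^{2n}$. Therefore both the $\limsup$ and the $\liminf$ of the averages lie within $o_n(1)$ of \eqref{eq:finite-prime-factorization}. Letting $n\to\infty$ proves existence of the limit and the claimed value.

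\textbf{Main obstacle.} The delicate point is Step 1, namely that the bad residue classes have vanishing total measure. This requires a compactness argument. For each $\epsilon$, choose a compact set $C\subseteq U_p$ of measure at least $1-\epsilon$; for instance take $\{v_p(\Delta)\le N\}$, which is clopen, and note $\mu\{v_p(\Delta)>N\}\to0$. Local constancy on the compact set gives a uniform $n$ such that every class meeting it is good. If instead one can read from Tate's algorithm that $\tau_p$ is determined by $(a_4,a_6)\bmod p^{n(N)}$ on $\{v_p(\Delta)\le N\}$, that route is more direct. I would try the clopen-set version first.
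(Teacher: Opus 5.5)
Your proof is correct, but it takes a more hands-on route than the paper. The paper packages the height-box averages as empirical probability measures $\mu_{S,X}$ on $\prod_{p\in S}\Z_p^2$. It uses Lemma~\ref{lem:equidistribution} to get convergence on cylinder sets, upgrades this to weak convergence $\mu_{S,X}\to\mu_S$ via Stone--Weierstrass density of locally constant functions, and then applies the Portmanteau theorem to the bounded function $\prod_{p\in S} f_p(\tau_p(x_p))$, whose discontinuities lie in the Haar-null singular locus. The factorization then comes from $\mu_S$ being a product measure.

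You instead prove the needed special case of Portmanteau by hand. You replace $f_p\circ\tau_p$ by step functions $g_{p,n}$ on residue classes mod $p^n$, apply Lemma~\ref{lem:equidistribution} only to these finitely many classes, and factor via CRT. The discrepancy goes into the explicit error $C\sum_{p\in S}|B_{p,n}|/p^{2n}$. Your constant $C$ is correct by telescoping, because $g_{p,n}=0$ on bad classes.

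The paper's version is shorter and gives a reusable statement: weak convergence, valid for any bounded a.e.-continuous test function. Yours is self-contained, avoids the weak topology and Stone--Weierstrass, and would give an effective rate if Tate's algorithm were made quantitative as you suggest at the end.

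Your ``main obstacle'' is not really an obstacle, and no compactness is needed. Let $\tilde B_{p,n}\subseteq\Z_p^2$ be the union of the bad balls. Your Step~1 already shows $\1_{\tilde B_{p,n}}(x)\to 0$ for every $x\in U_p$. Since $\Z_p^2\setminus U_p$ is null, dominated convergence gives $|B_{p,n}|/p^{2n}=\mu_p(\tilde B_{p,n})\to 0$ directly. Continuity from above also works, since a bad class mod $p^{n+1}$ has a bad parent class mod $p^n$, so the $\tilde B_{p,n}$ decrease in $n$. Your clopen-exhaustion argument via $\{v_p(\Delta)\le N\}$ is valid, just unnecessary.

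One small point of precision: call a class good only when its lift is contained in $U_p$ (or when $\tau_p$ is constant on the lift intersected with $U_p$), since $\tau_p$ is undefined on the singular locus. This changes nothing else in your argument.
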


\begin{proof}
Let $\mu_p$ be normalized Haar measure on $\Z_p^2$, and let
\[
K_S:=\prod_{p\in S}\Z_p^2
\qquad {\text {\rm and}}\qquad
\mu_S:=\bigotimes_{p\in S}\mu_p.
\]
For each $X$, define a probability measure $\mu_{S,X}$ on $K_S$ by
\[
\mu_{S,X}:=
\frac{1}{\NN(X)}
\sum_{\Ht(E(a_4,a_6))\le X}
\delta_{((a_4,a_6))_{p\in S}},
\]
where the sum runs over nonsingular pairs and $\delta_y$ denotes the unit point mass at $y$.

If $C\subseteq K_S$ is a cylinder set cut out by congruence conditions modulo $p^{\nu_p}$ for $p\in S$, then $C$ corresponds to a subset $A\subseteq (\Z/M\Z)^2$ with
\[
M:=\prod_{p\in S}p^{\nu_p}.
\]
Lemma~\ref{lem:equidistribution} therefore gives
\[
\mu_{S,X}(C)\to \mu_S(C)
\qquad (X\to\infty).
\]
Cylinder sets are clopen, so their indicators are continuous. Finite linear combinations of these indicators form the algebra of locally constant functions on $K_S$, which separates points and contains the constants. By the Stone--Weierstrass theorem, this algebra is uniformly dense in $C(K_S)$; see \cite[Theorem~7.33]{RudinPMA}. Hence $\mu_{S,X}$ converges weakly to $\mu_S$.

Now define
\[
G((x_p)_{p\in S}) := \prod_{p\in S} f_p(\tau_p(x_p))
\]
on $\prod_{p\in S}U_p$, where $U_p$ is as in Lemma~\ref{lem:local-constancy}, and extend $G$ arbitrarily to all of $K_S$. By Lemma~\ref{lem:local-constancy}, $G$ is bounded and continuous on $\prod_{p\in S}U_p$. Its discontinuity set is contained in
\[
K_S\setminus \prod_{p\in S}U_p,
\]
which has $\mu_S$-measure $0$. Since $\Z_p^2\setminus U_p$ has Haar measure zero for each $p\in S$, the values of the extension on the singular locus do not affect the integral. The Portmanteau theorem for bounded functions whose discontinuity set has $\mu_S$-measure zero therefore gives
\[
\int_{K_S} G\,d\mu_{S,X}\to \int_{K_S} G\,d\mu_S;
\]
see \cite[Section~2]{Billingsley}.
The left side is exactly
\[
\frac{1}{\NN(X)}\sum_{\Ht(E)\le X}\prod_{p\in S} f_p(\tau_p(E)).
\]
Since $\mu_S$ is a product measure and $G$ factors prime-by-prime, the right side equals
\[
\prod_{p\in S}
\left(
\int_{\Z_p^2} f_p(\tau_p(x_p))\,d\mu_p(x_p)
\right).
\]
By the definition of the local densities, each local integral is
\[
\int_{\Z_p^2} f_p(\tau_p(x_p))\,d\mu_p(x_p)
=
\sum_{K\in\KK}\delta_p(K)f_p(K).
\]
This proves \eqref{eq:finite-prime-factorization}.
\end{proof}

\section{Proof of Theorem~\ref{thm:master}}\label{sec:proofs}
This section proves Theorem~\ref{thm:master}. We first truncate every statistic to a finite set $S$ of primes, where the limiting generating function factors over $S$ by Proposition~\ref{prop:finite-prime}. We then let $S$ grow, controlling the primes outside $S$ with Lemma~\ref{lem:uniform-large-prime-tail}.

\subsection{Truncated statistics}
We replace each statistic by a truncated version that only sees a finite set of primes. For a fixed curve the two agree once that set is large enough, and the limiting generating function of the truncated statistics is a finite Euler product (Lemma~\ref{lem:truncated-gf}).

For a finite set of primes $S$, define the truncated statistics
\[
\omega_{K,S}(E):=\sum_{p\in S}\1_{\tau_p(E)=K}\qquad (K\in\KK),
\]
\[
\omega_{\Tam,S}(E):=\sum_{p\in S}\1_{c_p(E)>1},
\qquad
\Omega_S(E):=\sum_{p\in S}\Omega(c_p(E)),
\]
\[
\mathbf v_{\Pi,S}(E):=\Bigl(\sum_{p\in S}v_\ell(c_p(E))\Bigr)_{\ell\in\Pi},
\qquad
\Tam_S(E):=\prod_{p\in S}c_p(E).
\]
Because only finitely many primes divide the discriminant of a fixed elliptic curve, once $S$ contains every prime at which $\tau_p(E)\notin\KK_0$, we have
\[
\begin{aligned}
\omega_{K,S}(E)&= \omega_K(E)\quad(K\notin\KK_0),
&\omega_{\Tam,S}(E)&= \omega_{\Tam}(E),\\
\Omega_S(E)&= \Omega(\Tam(E)),
&\mathbf v_{\Pi,S}(E)&= \mathbf v_\Pi(\Tam(E)),\\
&&\Tam_S(E)&= \Tam(E).
\end{aligned}
\]
In particular, each truncated statistic converges to its global counterpart as $S$ increases through finite sets of primes.

\begin{lemma}\label{lem:truncated-gf}
Let $\Lambda\subseteq\KK\setminus\KK_0$ be a finite set of local reduction data, let $\Pi$ be a finite set of primes, and let $S$ be a finite set of primes. If $\Reals(s)\ge 0$ and
\[
|u|\le 1,\qquad |w|\le 1,\qquad |z_\ell|\le 1\ \forall\ell\in\Pi,\qquad |u_K|\le 1\ \forall K\in\Lambda,
\]
then
\begin{align}\label{eq:truncated-identity}
&\lim_{X\to\infty}\frac{1}{\NN(X)}\sum_{\Ht(E)\le X}
u^{\omega_{\Tam,S}(E)}w^{\Omega_S(E)}\mathbf z^{\mathbf v_{\Pi,S}(E)}\prod_{K\in\Lambda}u_K^{\omega_{K,S}(E)}\Tam_S(E)^{-s}
\notag\\
&\hspace{1.6cm}=
\prod_{p\in S}
\!\left(\beta_p+\sum_{K\in\KK\setminus\KK_0}\delta_p(K)\,\Phi_p(K;s,u,w,\mathbf z,\mathbf u)\right).
\end{align}
\end{lemma}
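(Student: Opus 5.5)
The plan is to recognise the summand on the left of \eqref{eq:truncated-identity} as a product over $p\in S$ of bounded functions of the local reduction data $\tau_p(E)$, and then to apply Proposition~\ref{prop:finite-prime}.

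First, for each $p\in S$ define $f_p:\KK\to\C$ by $f_p(K):=\Phi_p(K;s,u,w,\mathbf z,\mathbf u)$, as in \eqref{eq:Phi-def}. Since $c_p(E)=c(\tau_p(E))$, each truncated statistic is a sum over $p\in S$ of a function of $\tau_p(E)$:
\begin{itemize}
\item $\1_{c_p(E)>1}$ is a function of $\tau_p(E)$, and likewise $\Omega(c_p(E))$ and $v_\ell(c_p(E))$;
\item $\omega_{K,S}(E)=\sum_{p\in S}\1_{\tau_p(E)=K}$;
\item $\Tam_S(E)^{-s}=\prod_{p\in S}c(\tau_p(E))^{-s}$.
\end{itemize}
Exponentiating therefore gives the exact identity
\[
u^{\omega_{\Tam,S}(E)}w^{\Omega_S(E)}\mathbf z^{\mathbf v_{\Pi,S}(E)}\prod_{K\in\Lambda}u_K^{\omega_{K,S}(E)}\Tam_S(E)^{-s}=\prod_{p\in S}\Phi_p(\tau_p(E);s,u,w,\mathbf z,\mathbf u).
\]
This identity uses only that $u^{a+b}=u^au^b$ for integer exponents, and that $\prod_{K'\in\Lambda}u_{K'}^{\1_{K=K'}}$ equals $u_K$ when $K\in\Lambda$ and equals $1$ otherwise. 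The latter applies in particular to $K\in\KK_0$, since $\Lambda\subseteq\KK\setminus\KK_0$.

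Next, check that each $f_p$ is bounded. Under the hypotheses $|u|,|w|,|z_\ell|,|u_K|\le 1$ and $\Reals(s)\ge 0$, every factor in $\Phi_p$ has modulus at most $1$; in particular $|c(K)^{-s}|=c(K)^{-\Reals(s)}\le 1$ because $c(K)\ge 1$. So $|f_p|\le 1$. Proposition~\ref{prop:finite-prime} then says the limit exists and equals
\[
\prod_{p\in S}\sum_{K\in\KK}\delta_p(K)\Phi_p(K).
\]

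Finally, split each inner sum into $K\in\KK_0$ and $K\in\KK\setminus\KK_0$. Since $\Phi_p(K)=1$ on $\KK_0$, the first part equals $\beta_p$ by \eqref{eq:K0-def}, which yields the right side of \eqref{eq:truncated-identity}. The inner sums converge absolutely because $\sum_K\delta_p(K)=1$ by \eqref{eq:delta-normalization} and $|\Phi_p|\le 1$.

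I do not expect a real obstacle, since the analytic content is already contained in Proposition~\ref{prop:finite-prime} via Lemmas~\ref{lem:equidistribution} and~\ref{lem:local-constancy}. The only points needing care are the following.
\begin{itemize}
\item The definition of $\delta_p(K)$ for small primes $p\in S$, in particular $p=2,3$, where the relevant model is the $p$-minimal one. Proposition~\ref{prop:finite-prime} covers every prime, so this causes no problem.
\item The factorisation identity above must be checked to hold exactly, including the convention $0^0=1$ when some variable vanishes. This holds because all exponents are nonnegative integers.
\end{itemize}
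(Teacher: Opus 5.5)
Your proposal is correct and follows the paper's proof: you apply Proposition~\ref{prop:finite-prime} with $f_p=\Phi_p$, check that $|f_p|\le 1$, and identify the $\KK_0$ part of each local sum as $\beta_p$. Your explicit verification that the summand factors exactly as $\prod_{p\in S}\Phi_p(\tau_p(E))$ fills in a step the paper leaves implicit.
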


\begin{proof}
Apply Proposition~\ref{prop:finite-prime} with
\[
f_p(K)=\Phi_p(K;s,u,w,\mathbf z,\mathbf u)
\qquad (p\in S).
\]
Since $\Reals(s)\ge 0$ and all variables have modulus at most $1$, each $f_p$ is bounded by $1$ in absolute value, so Proposition~\ref{prop:finite-prime} applies. On $K\in\KK_0$ we have $\Phi_p(K)=1$, and hence
\[
\sum_{K\in\KK}\delta_p(K)\Phi_p(K)=\beta_p+\sum_{K\notin\KK_0}\delta_p(K)\Phi_p(K),
\]
which gives \eqref{eq:truncated-identity}.
\end{proof}

\subsection{Proof of the master Euler product}
We now let the finite set of primes grow. The truncation error splits into the finitely many primes below a cutoff, which Proposition~\ref{prop:finite-prime} handles, and the primes above it, which Lemma~\ref{lem:uniform-large-prime-tail} handles; letting first the cutoff and then the set of primes grow gives the theorem.

\begin{proof}[Proof of Theorem~\ref{thm:master}]
Fix a finite set of primes $S$. By Lemma~\ref{lem:truncated-gf}, the density generating function for the truncated statistics is exactly the finite product on the right side of \eqref{eq:truncated-identity}.

By Lemma~\ref{lem:local-estimates}, the infinite product in \eqref{eq:master-product} converges absolutely because its local factors are $1+O(p^{-2})$, and $\sum_p p^{-2}<\infty$. In particular, it converges uniformly on every compact subset of the closed parameter set
\[
\{\Reals(s)\ge 0,\ |u|\le 1,\ |w|\le 1,\ |z_\ell|\le 1\ \forall\ell\in\Pi,\ |u_K|\le 1\ \forall K\in\Lambda\}.
\]

Let
\[
W_S(E):=u^{\omega_{\Tam,S}(E)}w^{\Omega_S(E)}\mathbf z^{\mathbf v_{\Pi,S}(E)}\prod_{K\in\Lambda}u_K^{\omega_{K,S}(E)}\Tam_S(E)^{-s}
\]
and
\[
W(E):=u^{\omega_{\Tam}(E)}w^{\Omega(\Tam(E))}\mathbf z^{\mathbf v_\Pi(\Tam(E))}\prod_{K\in\Lambda}u_K^{\omega_K(E)}\Tam(E)^{-s}.
\]
Because $\Reals(s)\ge 0$ and all variables have modulus at most $1$, we have
\[
|W_S(E)|\le 1,
\qquad
|W(E)|\le 1.
\]
Moreover $W_S(E)=W(E)$ unless there exists a prime $p\notin S$ with $\tau_p(E)\notin\KK_0$. Hence
\[
|W_S(E)-W(E)|\le 2\sum_{p\notin S}\1_{\tau_p(E)\notin\KK_0}.
\]

Write
\[
L_p(s;u,w,\mathbf z,\mathbf u):=
\beta_p+\sum_{K\in\KK\setminus\KK_0}\delta_p(K)\,\Phi_p(K;s,u,w,\mathbf z,\mathbf u).
\]
Then, for every fixed choice of parameters in the theorem,
\begin{align*}
\limsup_{X\to\infty}\bigl|\mathcal Z_{\Lambda,\Pi,X}&(s;u,w,\mathbf z,\mathbf u)-\prod_p L_p(s;u,w,\mathbf z,\mathbf u)\bigr| \\
&\le
\limsup_{X\to\infty}\left|\frac{1}{\NN(X)}\sum_{\Ht(E)\le X}W(E)-\frac{1}{\NN(X)}\sum_{\Ht(E)\le X}W_S(E)\right|\\
&\quad\quad+
\limsup_{X\to\infty}\left|\frac{1}{\NN(X)}\sum_{\Ht(E)\le X}W_S(E)-\prod_{p\in S}L_p(s;u,w,\mathbf z,\mathbf u)\right|\\
&\quad \quad+
\left|\prod_{p\in S}L_p(s;u,w,\mathbf z,\mathbf u)-\prod_p L_p(s;u,w,\mathbf z,\mathbf u)\right|.
\end{align*}
The middle term is $0$ by Lemma~\ref{lem:truncated-gf}. For the first term, enlarge $S$ if necessary so that $2,3\in S$, and fix a real number $Y>\max S$ with $Y\ge5$. Then
\begin{align*}
\frac{1}{\NN(X)}\sum_{\Ht(E)\le X}|W(E)-W_S(E)|
&\le
2\sum_{\substack{p\le Y\\ p\notin S}}
\frac{\#\{E:\Ht(E)\le X,\ \tau_p(E)\notin\KK_0\}}{\NN(X)}\\
&\quad+
2\frac{1}{\NN(X)}
\sum_{\Ht(E)\le X}\sum_{p>Y}\1_{\tau_p(E)\notin\KK_0}.
\end{align*}
For each fixed prime $p$, Proposition~\ref{prop:finite-prime} with $S=\{p\}$ and $f_p(K)=\1_{K\notin\KK_0}$ gives
\[
\lim_{X\to\infty}
\frac{\#\{E:\Ht(E)\le X,\ \tau_p(E)\notin\KK_0\}}{\NN(X)}
=
1-\beta_p.
\]
Summing over the finitely many primes $p\le Y$ with $p\notin S$, we obtain
\[
\lim_{X\to\infty}
\sum_{\substack{p\le Y\\ p\notin S}}
\frac{\#\{E:\Ht(E)\le X,\ \tau_p(E)\notin\KK_0\}}{\NN(X)}
=
\sum_{\substack{p\le Y\\ p\notin S}}(1-\beta_p).
\]
For the tail over $p>Y$, Lemma~\ref{lem:uniform-large-prime-tail} yields
\[
\limsup_{X\to\infty}
\frac{1}{\NN(X)}
\sum_{\Ht(E)\le X}\sum_{p>Y}\1_{\tau_p(E)\notin\KK_0}
\ll
\sum_{p>Y}\frac{1}{p^2}.
\]
Hence, we have
\[
\limsup_{X\to\infty}
\frac{1}{\NN(X)}\sum_{\Ht(E)\le X}|W(E)-W_S(E)|
\le
2\sum_{\substack{p\le Y\\ p\notin S}}(1-\beta_p)
+
O\!\left(\sum_{p>Y}\frac{1}{p^2}\right).
\]
Letting $Y\to\infty$ and using Lemma~\ref{lem:K0-asymptotic}, which gives $1-\beta_p\ll p^{-2}$, we obtain
\[
\limsup_{X\to\infty}
\frac{1}{\NN(X)}\sum_{\Ht(E)\le X}|W(E)-W_S(E)|
\le
2\sum_{p\notin S}(1-\beta_p).
\]
For the third term, absolute convergence of the Euler product and Lemma~\ref{lem:local-estimates} give
\[
\left|\prod_{p\in S}L_p(s;u,w,\mathbf z,\mathbf u)-\prod_p L_p(s;u,w,\mathbf z,\mathbf u)\right|
\to 0
\qquad (S\to\infty).
\]
Since $\sum_p(1-\beta_p)<\infty$ by Lemma~\ref{lem:K0-asymptotic}, letting $S$ increase through finite sets of primes proves \eqref{eq:master-product}. The five specializations are immediate.
\end{proof}

\section{Analytic lemmas for coefficient extraction}\label{sec:analytic}

Theorem~\ref{thm:master} is proved entirely within Section~\ref{sec:proofs}. The lemmas in this section are used only to pass from the pointwise generating-function limits of Theorem~\ref{thm:master} to coefficientwise limiting distributions in the later theorems.

\subsection{Normal-family upgrade}

We shall repeatedly upgrade the pointwise limits furnished by Theorem~\ref{thm:master} to locally uniform limits inside the open unit discs and polydiscs, via the classical Vitali--Montel theorem.

\begin{lemma}\label{lem:vitali}
Let $D$ be either the open unit disc in $\C$ or a finite polydisc
\[
D=\{(z_1,\dots,z_n)\in\C^n:\ |z_i|<1\}.
\]
Let $F_X$ be holomorphic on $D$, and suppose that $|F_X|\le 1$ on $D$ for every $X$. If $F_X$ converges pointwise on $D$ to a function $F$, then $F$ is holomorphic and the convergence is uniform on every compact subset of $D$.
\end{lemma}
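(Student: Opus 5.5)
The argument combines Montel's theorem, which gives normal-family compactness for uniformly bounded holomorphic functions, with the identity theorem, which pins down every subsequential limit.

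\emph{Step 1: local uniform boundedness.} Since $|F_X|\le 1$ on all of $D$ for every $X$, the family $\{F_X\}$ is uniformly bounded on $D$. By Montel's theorem (valid on polydiscs in $\C^n$ as well as on the disc), every sequence $F_{X_k}$ with $X_k\to\infty$ has a subsequence converging uniformly on compact subsets of $D$.

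\emph{Step 2: holomorphy of the pointwise limit.} Take one such locally uniformly convergent subsequence, with limit $G$. A locally uniform limit of holomorphic functions is holomorphic. This follows from Weierstrass's theorem in one variable. For polydiscs, one applies the iterated Cauchy integral formula on small closed polydiscs and passes to the limit under the integral sign. Along the subsequence we also have pointwise convergence to $F$, so $G=F$ everywhere on $D$. Hence $F$ is holomorphic.

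\emph{Step 3: uniform convergence on compacta.} Suppose the full family $F_X$ does not converge to $F$ uniformly on some compact $C\subseteq D$. Then there are $\varepsilon>0$ and $X_k\to\infty$ with
\[
\sup_{C}|F_{X_k}-F|\ge\varepsilon
\]
for all $k$. By Step 1, some subsequence of $(F_{X_k})$ converges locally uniformly. By the argument of Step 2, its limit equals $F$, because pointwise limits are unique. This contradicts the displayed inequality, so $F_X\to F$ uniformly on every compact subset of $D$.

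Note that $X$ is a real parameter, not a sequence index. The statement is therefore phrased, and proved, through arbitrary sequences $X_k\to\infty$; Steps 1–3 handle this directly.

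The only delicate point is the several-variable setting. Montel's theorem in $\C^n$ follows from Arzelà–Ascoli once equicontinuity on compacta is established. That equicontinuity comes from the Cauchy estimates on polydiscs: a bound $|F_X|\le 1$ gives a uniform bound on the first partial derivatives on any compact subset. I expect this to be the main (though standard) step to spell out. Everything else is formal. In the paper, the result is simply cited as the Vitali–Montel theorem.
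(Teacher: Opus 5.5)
Your proof is correct and is the standard derivation of the Vitali--Montel theorem that the paper simply cites: Montel normality, then identification of every subsequential limit with $F$, then the subsequence-contradiction argument, with the real parameter $X$ handled properly through sequences. One small point: your opening invokes the identity theorem, but because convergence is assumed pointwise on all of $D$, you never need it; uniqueness of pointwise limits suffices, and that is exactly what Steps~2 and~3 use.
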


\begin{proof}
This is the classical Vitali--Montel theorem in one variable, together with its standard polydisc version in several variables; see, for example, \cite[Chapter~VII]{Conway} for the one-variable normal-family theorem and \cite[Chapter~1]{KrantzSCV} for the several-variable extension.
\end{proof}

\subsection{Coefficient extraction}

The next lemma turns convergence of generating functions into convergence of coefficient densities.

\begin{lemma}\label{lem:cauchy-coeff}
Let $F_X(u)=\sum_{n\ge 0}a_{X,n}u^n$ be power series that converge on $|u|\le 1$, and suppose that for every $0<\rho<1$ the functions $F_X$ converge uniformly on $|u|=\rho$ to a limit $F(u)=\sum_{n\ge 0}a_nu^n$. Then, for each fixed $n\ge 0$,
\[
a_{X,n}\to a_n
\qquad (X\to\infty).
\]
If, in addition, $I$ is a finite index set and
\[
G_X(\mathbf z)=\sum_{\mathbf j\in\Z_{\ge 0}^{I}} b_{X,\mathbf j}\mathbf z^{\mathbf j}
\]
converges uniformly on every polytorus $|z_i|=\rho_i<1$ $(i\in I)$ to
$G(\mathbf z)=\sum_{\mathbf j\in\Z_{\ge0}^I}b_{\mathbf j}\mathbf z^{\mathbf j}$, then for each fixed multi-index $\mathbf j$ one has $b_{X,\mathbf j}\to b_{\mathbf j}$.
\end{lemma}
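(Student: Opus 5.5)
The plan is to use Cauchy's integral formula for coefficients. First I will check that the limit $F$ is a holomorphic function on the open unit disc with the stated expansion. It is the uniform limit on each circle $|u|=\rho$ of the holomorphic $F_X$. By the maximum modulus principle, $F_X$ is then uniformly Cauchy on the closed disc $|u|\le\rho$. Hence $F$ is holomorphic on $|u|<1$, and its Taylor coefficients are the $a_n$ appearing in $F(u)=\sum a_nu^n$. The lemma takes this expansion as given; it is exactly the situation produced by Lemma~\ref{lem:vitali}.

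Next, fix $n$ and some $0<\rho<1$. Since each $F_X$ converges on $|u|\le1$, it is holomorphic on $|u|<1$. Cauchy's formula therefore gives
\[
a_{X,n}=\frac{1}{2\pi i}\oint_{|u|=\rho}\frac{F_X(u)}{u^{n+1}}\,du,
\]
and the same formula holds for $F$ and $a_n$. Subtracting and bounding the integrand yields
\[
|a_{X,n}-a_n|\le \rho^{-n}\sup_{|u|=\rho}|F_X(u)-F(u)|.
\]
The right-hand side tends to $0$ as $X\to\infty$ by the assumed uniform convergence on the circle.

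For the several-variable statement I will argue the same way, using the iterated Cauchy formula on the distinguished boundary $\prod_{i\in I}\{|z_i|=\rho_i\}$:
\[
b_{X,\mathbf j}=\frac{1}{(2\pi i)^{|I|}}\oint\cdots\oint \frac{G_X(\mathbf z)}{\prod_i z_i^{j_i+1}}\,d\mathbf z.
\]
Equivalently, $b_{X,\mathbf j}$ is the $\mathbf j$-th Fourier coefficient of $\theta\mapsto G_X(\rho e^{i\theta})$ divided by $\rho^{\mathbf j}$. This gives
\[
|b_{X,\mathbf j}-b_{\mathbf j}|\le \prod_i\rho_i^{-j_i}\sup_{\text{torus}}|G_X-G|\to0.
\]

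The only step requiring care is justifying the coefficient formula for the limit functions $F$ and $G$. Here I would note that the power series of $G$ converges absolutely on the open polydisc. On the torus with radii $\rho_i<1$ it therefore converges uniformly, and termwise integration against $\prod_i z_i^{-j_i-1}$ is legitimate. This step is routine rather than a real obstacle.
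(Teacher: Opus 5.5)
Your proposal is correct and is essentially the paper's argument: you apply Cauchy's coefficient formula on a circle $|u|=\rho$ (respectively the iterated formula on the polytorus $|z_i|=\rho_i$) to both $F_X$ and $F$, giving $|a_{X,n}-a_n|\le\rho^{-n}\sup_{|u|=\rho}|F_X-F|\to 0$. Your extra remarks on holomorphy of the limit and termwise integration only make explicit justifications the paper leaves implicit, and the maximum-modulus step is not even needed, since $F=\sum a_nu^n$ converging on every circle $|u|=\rho<1$ already forces radius of convergence at least $1$.
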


\begin{proof}
For the one-variable statement, Cauchy's coefficient formula gives
\[
a_{X,n}=
\frac{1}{2\pi i}\int_{|u|=\rho}\frac{F_X(u)}{u^{n+1}}\,du
\qquad \text{and}\qquad
a_n=
\frac{1}{2\pi i}\int_{|u|=\rho}\frac{F(u)}{u^{n+1}}\,du.
\]
Uniform convergence on the circle $|u|=\rho$ therefore implies $a_{X,n}\to a_n$. The multivariable version follows from the iterated Cauchy integral formula on the polytorus $|z_i|=\rho_i$; see \cite[Chapter~4]{Ahlfors} for the one-variable Cauchy coefficient formula and \cite[Chapter~1]{KrantzSCV} for the several-variable form.
\end{proof}

\section{Proofs of the distribution, valuation, covariance, and tail theorems}\label{sec:distribution-proofs}
In this section we compile the results of the previous sections analytically to obtain the main results of this paper.

\subsection{Proof of the distribution theorem for the counts \texorpdfstring{$\omega_K(E)$}{omega K(E)}}
Specializing Theorem~\ref{thm:master} to the variables $u_K$ gives the joint generating function of the counts $\omega_K(E)$, $K\in\Lambda$. Since $\sum_p\delta_p(K)<\infty$ for every $K\in\Lambda$, the resulting Euler product is entire, and its logarithmic derivatives give the means, variances and covariances.

\begin{proof}[Proof of Theorem~\ref{thm:omegaK}]
Apply Theorem~\ref{thm:master} with the given $\Lambda$, $\Pi=\varnothing$, $u=w=1$, and $s=0$. This gives pointwise convergence
\[
F_X(\mathbf u):=\frac{1}{\NN(X)}\sum_{\Ht(E)\le X}\prod_{K\in\Lambda}u_K^{\omega_K(E)}
\longrightarrow
F(\mathbf u):=
\prod_p\!\left(1-\sum_{K\in\Lambda}\delta_p(K)+\sum_{K\in\Lambda}\delta_p(K)u_K\right)
\]
for every $\mathbf u$ in the open unit polydisc. Since $|F_X(\mathbf u)|\le 1$ there, Lemma~\ref{lem:vitali} upgrades this to uniform convergence on every closed polydisc $|u_K|\le\rho<1$. Because
\[
1-\sum_{K\in\Lambda}\delta_p(K)+\sum_{K\in\Lambda}\delta_p(K)u_K=1+\sum_{K\in\Lambda}\delta_p(K)(u_K-1),
\]
and $\sum_p\delta_p(K)<\infty$ for each $K\in\Lambda$ by Lemma~\ref{lem:local-estimates}, the product converges absolutely and locally uniformly for all $\mathbf u\in\C^\Lambda$, so the identity extends to all $\mathbf u\in\C^\Lambda$. The coefficient of $\mathbf u^{\mathbf r}$ in $F_X(\mathbf u)$ is exactly $\#\{E:\Ht(E)\le X,\ \omega_K(E)=r_K\ \forall K\in\Lambda\}/\NN(X)$, so Lemma~\ref{lem:cauchy-coeff} shows that $\pi_\Lambda(\mathbf r)$ exists and equals the corresponding coefficient of $F(\mathbf u)$. This proves \eqref{eq:omegaK-gf}.

For the marginal statistics, fix $K\in\Lambda$ and apply the one-variable version of the above with $\Lambda=\{K\}$. Set $a_p:=\delta_p(K)$, so that $F_K(u)=\prod_p(1+a_p(u-1))$ and $\sum_p a_p<\infty$. Since $0\le a_p\le 1$, on the disc $|u-1|\le 1/2$ we have $|a_p(u-1)|\le 1/2$ for every $p$, so each factor stays away from $0$. Termwise differentiation of $\log F_K=\sum_p\log(1+a_p(u-1))$ gives
\[
F_K'(1)=\sum_p a_p=\sum_p\delta_p(K),
\qquad
F_K''(1)=\Bigl(\sum_p a_p\Bigr)^{\!2}-\sum_p a_p^2.
\]
Since $F_K(u)=\E[u^{\omega_K(E)}]$ is a probability generating function and $F_K'(1)<\infty$, we have $\E[\omega_K(E)]=F_K'(1)$, and
\[
\Var(\omega_K(E))=F_K''(1)+F_K'(1)-F_K'(1)^2=\sum_p a_p(1-a_p).
\]
This proves \eqref{eq:omegaK-mean-var}.

Finally, for distinct $K,K'\in\Lambda$, logarithmic differentiation of the bivariate generating function $F_{K,K'}(u_K,u_{K'})=\prod_p(1+a_p(u_K-1)+a_p'(u_{K'}-1))$, where $a_p:=\delta_p(K)$ and $a_p':=\delta_p(K')$, gives
\[
\frac{\partial^2 F_{K,K'}}{\partial u_K\,\partial u_{K'}}\bigg|_{(1,1)}
=\sum_p\frac{\partial}{\partial u_{K'}}\left(\frac{a_p}{1+a_p(u_K-1)+a_p'(u_{K'}-1)}\right)\bigg|_{(1,1)}
+\Bigl(\sum_p a_p\Bigr)\Bigl(\sum_p a_p'\Bigr).
\]
Evaluating at $(u_K,u_{K'})=(1,1)$ and applying the standard joint-p.g.f.~identity
\[
\Cov(\omega_K,\omega_{K'})=\frac{\partial^2 F_{K,K'}}{\partial u_K\,\partial u_{K'}}\bigg|_{(1,1)}-\Bigl(\sum_p a_p\Bigr)\Bigl(\sum_p a_p'\Bigr),
\]
we obtain, after expanding the partial derivative and using $(u_K,u_{K'})=(1,1)$,
\[
\Cov(\omega_K(E),\omega_{K'}(E))=-\sum_p a_p a_p'=-\sum_p\delta_p(K)\delta_p(K'),
\]
which is \eqref{eq:omegaK-cov}. Absolute convergence follows from $a_p,a_p'\ll p^{-2}$.
\end{proof}

\subsection{Proof of the distribution theorem for \texorpdfstring{$\omega_{\Tam}(E)$}{the Tamagawa prime count}}
The argument is the one-variable analogue of the previous one, with the single variable $u$ recording whether $c_p(E)>1$; now infinitely many local reduction data are aggregated at once.

\begin{proof}[Proof of Theorem~\ref{thm:omegaE}]
Take $\Lambda=\varnothing$, $\Pi=\varnothing$, set $w=1$, and put $s=0$ in Theorem~\ref{thm:master}. This gives pointwise convergence
\[
F_X(u):=\frac{1}{\NN(X)}\sum_{\Ht(E)\le X}u^{\omega_{\Tam}(E)}
\longrightarrow
F(u):=
\prod_p\bigl(\delta_p(1)+(1-\delta_p(1))u\bigr)
\]
for every $|u|<1$. Since $|F_X(u)|\le 1$ on the open unit disc, Lemma~\ref{lem:vitali} upgrades this to uniform convergence on every closed disc $|u|\le \rho<1$. Since the coefficient of $u^r$ in $F_X(u)$ is exactly
\[
\frac{\#\{E:\Ht(E)\le X,\ \omega_{\Tam}(E)=r\}}{\NN(X)},
\]
Lemma~\ref{lem:cauchy-coeff} shows that the density $\pi_r$ exists for every $r\ge 0$, and that
\[
\sum_{r\ge 0}\pi_ru^r=F(u)
\qquad (|u|<1).
\]
Because
\[
\delta_p(1)+(1-\delta_p(1))u = 1 + (u-1)(1-\delta_p(1))
\]
and $\sum_p(1-\delta_p(1))<\infty$ by Lemma~\ref{lem:local-estimates}, the product converges absolutely and locally uniformly for all $u\in\C$, so the identity extends to all $u\in\C$. This proves \eqref{eq:omegaE-gf}.

Write
\[
a_p:=1-\delta_p(1).
\]
Then $\sum_p a_p<\infty$, and $F(u)=\prod_p(1+a_p(u-1))$. On the disc $|u-1|\le 1/2$ we have $|a_p(u-1)|\le 1/2$ for every $p$, so each factor stays away from $0$. Termwise differentiation gives
\[
F'(1)=\sum_p a_p=\sum_p(1-\delta_p(1)),
\qquad
F''(1)=\Bigl(\sum_pa_p\Bigr)^{\!2}-\sum_pa_p^2.
\]
Since $F(u)=\sum_r\pi_ru^r$ is a probability generating function and $F'(1)<\infty$, we have
\[
\E[\omega_{\Tam}(E)]=F'(1),\qquad
\Var(\omega_{\Tam}(E))=F''(1)+F'(1)-F'(1)^2=\sum_p a_p(1-a_p).
\]
Recalling that $1-a_p=\delta_p(1)$ gives \eqref{eq:omegaE-mean-var}. Absolute convergence follows from $a_p\ll p^{-2}$.
\end{proof}

\subsection{Proof of the distribution theorem for \texorpdfstring{$\Omega(\Tam(E))$}{Omega(Tam(E))}}
Here the variable $w$ records $\Omega(\Tam(E))$. We identify the limiting law and its mean, and compare that mean with the mean of $\omega(\Tam(E))$ under the limiting Tamagawa law.

\begin{proof}[Proof of Theorem~\ref{thm:Omega}]
Take $\Lambda=\varnothing$, $\Pi=\varnothing$, set $u=1$, and put $s=0$ in Theorem~\ref{thm:master}. Then we have pointwise convergence
\[
G_X(w):=\frac{1}{\NN(X)}\sum_{\Ht(E)\le X}w^{\Omega(\Tam(E))}
\longrightarrow
G(w):=
\prod_p\left(\sum_{t\ge 1}\delta_p(t)w^{\Omega(t)}\right)
\]
for every $|w|<1$. Since $|G_X(w)|\le 1$ on the open unit disc, Lemma~\ref{lem:vitali} upgrades this to uniform convergence on every closed disc $|w|\le \rho<1$. The coefficient of $w^b$ in $G_X(w)$ is exactly
\[
\frac{\#\{E:\Ht(E)\le X,\ \Omega(\Tam(E))=b\}}{\NN(X)},
\]
so Lemma~\ref{lem:cauchy-coeff} shows that the density $\rho_b$ exists for every $b\ge 0$, and that
\[
\sum_{b\ge 0}\rho_bw^b=G(w)
\qquad (|w|<1).
\]
For $|w|\le 1$, each local factor is continuous and satisfies
\[
\left|\sum_{t\ge 1}\delta_p(t)w^{\Omega(t)}-1\right|
\le 2(1-\delta_p(1)).
\]
Lemma~\ref{lem:local-estimates} therefore gives uniform convergence of the Euler product on the closed unit disc. In particular, $G(w)\to G(1)=1$ as real $w\uparrow 1$. Since $\rho_b\ge 0$, the monotone convergence theorem now gives
\[
\sum_{b\ge 0}\rho_b
=\lim_{w\uparrow 1}\sum_{b\ge 0}\rho_bw^b
=\lim_{w\uparrow 1}G(w)=1.
\]
Thus the power series on the left converges absolutely and uniformly for $|w|\le 1$, and the identity extends to the closed unit disc by continuity. This gives \eqref{eq:Omega-gf}.

For each prime $p$, put
\[
h_p(w):=\sum_{t\ge 1}\delta_p(t)w^{\Omega(t)}
\qquad (0\le w\le 1).
\]
Then $h_p(1)=1$, and for $0<w\le 1$ we have
\[
h_p'(w)=\sum_{t\ge 1}\delta_p(t)\Omega(t)w^{\Omega(t)-1},
\]
which extends continuously to $w=0$. Hence, with
\[
A_p:=\sup_{0\le w\le 1}|h_p'(w)|,
\]
we have $A_p\ll p^{-2}$ for $p\ge 5$. In particular, $\sum_p A_p<\infty$ and $A:=\sup_p A_p<\infty$.

Choose $0<\eta\le 1/(2(1+A))$. For $w\in[1-\eta,1]$, the mean-value theorem gives
\[
|h_p(w)-1|\le A_p|1-w|\le A\eta\le \tfrac12,
\]
so $h_p(w)\in[\tfrac12,\tfrac32]$. Therefore $\log h_p(w)$ is well defined on $[1-\eta,1]$, the series
\[
\sum_p \log h_p(w)
\]
converges uniformly there, and
\[
\left|\frac{h_p'(w)}{h_p(w)}\right|\le 2A_p
\]
shows that
\[
\sum_p \frac{h_p'(w)}{h_p(w)}
\]
also converges uniformly on $[1-\eta,1]$. Hence $\log G(w)=\sum_p\log h_p(w)$ is differentiable at $w=1$, and since $G(1)=1$ and $h_p(1)=1$ for every $p$,
\[
G'(1)=G(1)\sum_p\frac{h_p'(1)}{h_p(1)}
=\sum_p h_p'(1)
=
\sum_p\sum_{t\ge 1}\delta_p(t)\Omega(t).
\]
Since $G(w)=\sum_{b\ge 0}\rho_bw^b$ is a probability generating function and $G'(1)<\infty$, we obtain
\[
\E[\Omega(\Tam(E))]=\sum_{b\ge 0}b\rho_b=G'(1),
\]
which proves \eqref{eq:Omega-mean}.

Finally, $\omega(n)\le \Omega(n)$ for every positive integer $n$, so
\[
\omega(\Tam(E))\le \Omega(\Tam(E))
\]
for every elliptic curve $E$. Taking expectations yields \eqref{eq:smallomega-bound}.
\end{proof}

\subsection{Proof of the valuation theorem}
The variables $z_\ell$, for $\ell$ in a finite set of primes $\Pi$, record the $\ell$-adic valuations of $\Tam(E)$. Setting some of them to $0$ gives the densities of curves whose Tamagawa product is coprime to prescribed primes.

\begin{proof}[Proof of Theorem~\ref{thm:valuations}]
Set $\Lambda=\varnothing$, $u=1$, $w=1$, and $s=0$ in Theorem~\ref{thm:master}. Then we have pointwise convergence
\[
H_X(\mathbf z):=\frac{1}{\NN(X)}\sum_{\Ht(E)\le X}\mathbf z^{\mathbf v_\Pi(\Tam(E))}
\longrightarrow
H(\mathbf z):=
\prod_p\left(\sum_{t\ge 1}\delta_p(t)\prod_{\ell\in\Pi}z_\ell^{v_\ell(t)}\right)
\]
for every $|z_\ell|<1$. Since $|H_X(\mathbf z)|\le 1$ on the open unit polydisc, Lemma~\ref{lem:vitali} upgrades this to uniform convergence on every closed polydisc $|z_\ell|\le \rho<1$. The coefficient of $\mathbf z^{\mathbf j}$ in $H_X(\mathbf z)$ is exactly
\[
\frac{\#\{E:\Ht(E)\le X,\ v_\ell(\Tam(E))=j_\ell\ \text{for all }\ell\in\Pi\}}{\NN(X)}.
\]
Therefore Lemma~\ref{lem:cauchy-coeff} implies the existence of the density $Q_\Pi(\mathbf j)$ for every multi-index $\mathbf j$. By Fatou's lemma $\sum_{\mathbf j}Q_\Pi(\mathbf j)\le 1$, so the power series on the left converges absolutely and uniformly on the closed unit polydisc $|z_\ell|\le 1$. For the Euler product, the factors at $p=2,3$ are absolutely convergent power series on the closed polydisc, since $\sum_t\delta_p(t)=1$. For $p\ge 5$, Lemma~\ref{lem:local-estimates} gives
\[
\sup_{|z_\ell|\le 1}
\left|
\sum_{t\ge 1}\delta_p(t)\prod_{\ell\in\Pi}z_\ell^{v_\ell(t)}-1
\right|
\le
2\sum_{t\ge 2}\delta_p(t)
\ll p^{-2}.
\]
Thus the Weierstrass $M$-test gives uniform convergence of the Euler product on the closed unit polydisc. Hence the identity extends by continuity from the open polydisc to $|z_\ell|\le 1$, which proves \eqref{eq:valuation-gf}. Setting $z_\ell=1$ for every $\ell\in\Pi$ then gives $\sum_{\mathbf j}Q_\Pi(\mathbf j)=1$.

To prove \eqref{eq:noA-divisibility}, apply the theorem with a finite set $\Pi$ containing $A$, and then set $z_\ell=0$ for $\ell\in A$ and $z_\ell=1$ for $\ell\in\Pi\setminus A$. On the left, only those curves survive for which $v_\ell(\Tam(E))=0$ for every $\ell\in A$, that is, for which no prime in $A$ divides $\Tam(E)$. On the right, the local factor becomes
\[
\sum_{\substack{t\ge 1 \\ (t,\prod_{\ell\in A}\ell)=1}}\delta_p(t),
\]
which proves \eqref{eq:noA-divisibility}. Equation \eqref{eq:ell-divides-Tam} is the complement of the case $A=\{\ell\}$, and \eqref{eq:odd-density} is the special case $A=\{2\}$.
\end{proof}

\subsection{Proof of the anti-correlation theorem}
Part~(1) computes the covariance of two valuations from their joint limiting law. Part~(2) is proved prime by prime: every local term of the covariance is negative, because the cross moment is supported on the geometric tail $t\ge5$ and is dominated there by a product of two explicit lower bounds. Part~(3) follows from the same local bounds.

\begin{proof}[Proof of Theorem~\ref{thm:covariance-vl}]
Fix distinct primes $\ell,\ell'$, and set $\Pi=\{\ell,\ell'\}$, $\Lambda=\varnothing$, $u=w=1$, $s=0$ in Theorem~\ref{thm:master}. The joint valuation generating function is
\[
H(z_\ell,z_{\ell'})=\prod_p h_p(z_\ell,z_{\ell'}),
\qquad
h_p(z_\ell,z_{\ell'}):=\sum_{t\ge 1}\delta_p(t)z_\ell^{v_\ell(t)}z_{\ell'}^{v_{\ell'}(t)}.
\]
For $p\ge 5$, the estimates
\[
\sum_{t\ge 1}\delta_p(t)v_\ell(t)\ll p^{-2},
\qquad
\sum_{t\ge 1}\delta_p(t)v_\ell(t)v_{\ell'}(t)\le
\sum_{t\ge 2}\delta_p(t)t^2\ll p^{-2}
\]
follow from Lemma~\ref{lem:local-estimates} and \eqref{eq:moment-local-estimate} with $k=2$.

For a complex neighborhood of $(1,1)$, note that $v_\ell(t)+v_{\ell'}(t)\le\Omega(t)$ because $\ell\ne\ell'$. Thus, on $|z_\ell|,|z_{\ell'}|\le2$, the series for $h_p-1$ and its derivatives of total order at most two are bounded by a constant times $\sum_{t\ge2}\delta_p(t)t^3$. This is finite for every $p$ by the geometric tails in Theorem~\ref{thm:GOT-local}, and it is $O(p^{-2})$ for $p\ge5$ by Lemma~\ref{lem:local-estimates}. For large $p$ these bounds keep $h_p$ uniformly close to $1$; the remaining finitely many factors are nonzero on a sufficiently small common neighborhood of $(1,1)$, since $h_p(1,1)=1$. Hence the logarithmic derivatives of $\prod_p h_p$ up to the mixed second derivative converge uniformly on a smaller closed neighborhood of $(1,1)$. Termwise logarithmic differentiation gives
\[
\frac{\partial^2\log H}{\partial z_\ell\partial z_{\ell'}}\bigg|_{(1,1)}
=
\sum_p\Bigl(
\sum_{t\ge 1}\delta_p(t)v_\ell(t)v_{\ell'}(t)
-
\bigl(\textstyle\sum_{t\ge 1}\delta_p(t)v_\ell(t)\bigr)
\bigl(\textstyle\sum_{t\ge 1}\delta_p(t)v_{\ell'}(t)\bigr)
\Bigr),
\]
and the standard joint probability-generating-function identity gives the formula in part (1) of the theorem. The same displayed bounds prove absolute convergence of the outer sum.

It remains to prove the sign. For each prime $p$, define
\[
C_p(\ell,\ell') :=
\sum_{t\ge 1}\delta_p(t)v_\ell(t)v_{\ell'}(t)
-
\Bigl(\sum_{t\ge 1}\delta_p(t)v_\ell(t)\Bigr)
\Bigl(\sum_{t\ge 1}\delta_p(t)v_{\ell'}(t)\Bigr),
\]
so that $\Cov(v_\ell(\Tam(E)),v_{\ell'}(\Tam(E)))=\sum_p C_p(\ell,\ell')$. We shall prove the stronger local statement
\begin{equation}\label{eq:local-cov-negative}
C_p(\ell,\ell')<0
\qquad\text{for every prime }p\text{ and every }\ell\ne\ell'.
\end{equation}

The geometric tails in Theorem~\ref{thm:GOT-local} give a positive constant $\alpha_p$ with
\begin{equation}\label{eq:tail-alpha}
\delta_p(t)=\alpha_p p^{-t}\qquad(t\ge 5),
\end{equation}
namely
\[
\alpha_2=\frac{1}{2\cdot 1023},\qquad
\alpha_3=\frac{1}{3\cdot 29524},\qquad
\alpha_p=\frac{p^8(p-1)^2}{2(p^{10}-1)}\quad(p\ge 5).
\]
We shall repeatedly use the bounds
\begin{equation}\label{eq:alpha-bounds}
\alpha_p<\tfrac12
\qquad\text{and}\qquad
\alpha_p\ge \tfrac{1}{88572}
\qquad(\text{all primes }p).
\end{equation}
These are immediate for $p=2,3$ from the displayed values. For $p\ge5$, the upper bound follows from $p^8(p-1)^2<p^{10}-1$, and the lower bound follows from $\alpha_p>(p-1)^2/(2p^2)\ge 16/50$.

We split each local sum into a head and tail by
\[
A_{p,r}:=\sum_{1\le t\le 4}\delta_p(t)v_r(t),
\qquad
B_{p,r}:=\sum_{t\ge 5}\delta_p(t)v_r(t).
\]
Since no integer $t\le 4$ is divisible by two distinct primes, the cross-moment is supported entirely on the tail:
\[
\sum_{t\ge 1}\delta_p(t)v_\ell(t)v_{\ell'}(t)
=
\sum_{t\ge 5}\delta_p(t)v_\ell(t)v_{\ell'}(t).
\]
The head sums $A_{p,r}$ admit the lower bounds
\begin{equation}\label{eq:A2A3-lower}
A_{p,2}\ge \frac{1}{2p^2},
\qquad
A_{p,3}\ge \frac{1}{3p^3}
\qquad(p\text{ prime}).
\end{equation}
For $p=2,3$ this is checked directly from the displayed formulas in Theorem~\ref{thm:GOT-local}:
\[
A_{2,2}\ge\delta_2(2)=\frac{7495}{24552}>\frac18,\qquad
A_{2,3}=\delta_2(3)=\frac{1153}{16368}>\frac1{24},
\]
and
\[
A_{3,2}\ge\delta_3(2)=\frac{509345}{6377184}>\frac1{18},\qquad
A_{3,3}=\delta_3(3)=\frac{30619}{2391444}>\frac1{81}.
\]
For $p\ge5$, the first inequality follows from
\[
\delta_p(2)\ge \frac{1}{2p^2},
\]
which is equivalent to
\[
p^{10}-p^8-p^7-p^5-2p^2-2p-1\ge0.
\]
This last polynomial is positive for $p\ge5$ because
$p^5(p^5-p^3-p^2-1)>2p^2+2p+1$. Similarly, the second inequality follows from
\[
\delta_p(3)\ge \frac{1}{3p^3},
\]
which is equivalent to
\[
p^9-2p^8-2p^7-2p^6+p^5-2p^4-2p^3-2p^2-2p-2\ge0;
\]
for $p\ge5$ this is positive since
$p^6(p^3-2p^2-2p-2)>2(p^4+p^3+p^2+p+1)$.

We use the following tail bound. If $r\ne s$ are primes and $q\ge 2$, then
\begin{equation}\label{eq:cross-tail-bound}
\sum_{t\ge 5}q^{-t}v_r(t)v_s(t)
\le
\frac{q^{-rs}}{(1-q^{-rs})^2}.
\end{equation}
The summand vanishes unless $rs\mid t$; and if $i=v_r(t)\ge 1$ and $j=v_s(t)\ge 1$, then $ij\le r^{i-1}s^{j-1}\le t/(rs)$, so
\[
\sum_{t\ge 5}q^{-t}v_r(t)v_s(t)
\le
\sum_{rs\mid t}q^{-t}\frac{t}{rs}
=
\sum_{k\ge1}kq^{-rsk}
=
\frac{q^{-rs}}{(1-q^{-rs})^2}.
\]
For $r\ge 5$ the single tail term $t=r$ gives the complementary lower bound
\begin{equation}\label{eq:B-lower}
B_{p,r}\ge \alpha_p p^{-r}.
\end{equation}

We now prove \eqref{eq:local-cov-negative}. By symmetry assume $\ell<\ell'$.

First let $(\ell,\ell')=(2,3)$. By \eqref{eq:cross-tail-bound},
\[
\sum_{t\ge 1}\delta_p(t)v_2(t)v_3(t)
\le
\alpha_p\frac{p^{-6}}{(1-p^{-6})^2}.
\]
On the other hand, \eqref{eq:A2A3-lower} gives
$A_{p,2}A_{p,3}\ge 1/(6p^5)$. Thus $C_p(2,3)<0$ follows once
\[
\alpha_p\frac{p^{-6}}{(1-p^{-6})^2}<\frac{1}{6p^5}.
\]
For $p\ge5$ this follows from $\alpha_p<1/2$, since
$3/(p(1-p^{-6})^2)<1$. For $p=2$, the required inequality is
\[
\frac{1}{2046}\frac{2^{-6}}{(1-2^{-6})^2}<\frac{1}{6\cdot 2^5},
\]
which is equivalent to $6\cdot 2^5\cdot 2^6<2046\cdot 63^2$. For $p=3$, it is
\[
\frac{1}{88572}\frac{3^{-6}}{(1-3^{-6})^2}<\frac{1}{6\cdot 3^5},
\]
which is equivalent to $6\cdot 3^5\cdot 3^6<88572\cdot 728^2$. Both integer inequalities are immediate.

Next suppose $\ell=2$ and $\ell'\ge5$. By \eqref{eq:cross-tail-bound} and \eqref{eq:B-lower},
\[
\sum_t\delta_p(t)v_2(t)v_{\ell'}(t)
\le
\alpha_p\frac{p^{-2\ell'}}{(1-p^{-2\ell'})^2},
\qquad
A_{p,2}B_{p,\ell'}\ge \frac{\alpha_p}{2p^{\ell'+2}}.
\]
Since
\[
\frac{2p^{2-\ell'}}{(1-p^{-2\ell'})^2}\le
\frac{2\cdot 2^{-3}}{(1-2^{-10})^2}<1,
\]
we have $\sum_t\delta_p(t)v_2(t)v_{\ell'}(t)<A_{p,2}B_{p,\ell'}$, and hence
$C_p(2,\ell')<0$.

The case $\ell=3$ and $\ell'\ge 5$ is analogous: we have
\[
\sum_t\delta_p(t)v_3(t)v_{\ell'}(t)
\le
\alpha_p\frac{p^{-3\ell'}}{(1-p^{-3\ell'})^2},
\qquad
A_{p,3}B_{p,\ell'}\ge \frac{\alpha_p}{3p^{\ell'+3}},
\]
and
\[
\frac{3p^{3-2\ell'}}{(1-p^{-3\ell'})^2}\le
\frac{3\cdot 2^{-7}}{(1-2^{-15})^2}<1,
\]
so $C_p(3,\ell')<0$.

Finally suppose $\ell,\ell'\ge 5$. Then $A_{p,\ell}=A_{p,\ell'}=0$, and \eqref{eq:cross-tail-bound}, \eqref{eq:B-lower}, and \eqref{eq:alpha-bounds} give
\[
\sum_t\delta_p(t)v_\ell(t)v_{\ell'}(t)
\le
\alpha_p\frac{p^{-\ell\ell'}}{(1-p^{-\ell\ell'})^2}
<
\alpha_p^2 p^{-\ell-\ell'}
\le
B_{p,\ell}B_{p,\ell'}.
\]
The middle inequality follows from
\[
\frac{p^{-(\ell\ell'-\ell-\ell')}}{(1-p^{-\ell\ell'})^2}
\le
\frac{2^{-23}}{(1-2^{-35})^2}
<
\frac{1}{88572}
\le \alpha_p,
\]
since for distinct primes $\ell,\ell'\ge 5$ the value $\ell\ell'-\ell-\ell'$ is minimized at $(\ell,\ell')=(5,7)$, where it equals $23$. Summing \eqref{eq:local-cov-negative} over $p$ gives strict negativity of the global covariance.

We now turn to the decay bound in part (3). Write
\[
\mu_{p,r}:=\sum_{t\ge 1}\delta_p(t)v_r(t).
\]
Since $C_p(\ell,\ell')<0$ and both terms in the expression for $C_p$ are nonnegative, we have $|C_p(\ell,\ell')|\le \mu_{p,\ell}\mu_{p,\ell'}$. For $p\ge 5$, the explicit densities and the geometric tail in Theorem~\ref{thm:GOT-local} give
\[
\mu_{p,2}\ll p^{-2},\qquad
\mu_{p,3}\ll p^{-3},\qquad
\mu_{p,r}\ll p^{-r}\quad(r\ge 5).
\]
For $p\in\{2,3\}$, the same geometric tails give $\mu_{p,r}\ll p^{-r}$ for $r\ge 5$, while $\mu_{p,2}$ and $\mu_{p,3}$ are absolute constants. Splitting into the four cases $(\ell,\ell')=(2,3)$, $(2,r)$ with $r\ge 5$, $(3,r)$ with $r\ge 5$, and $\ell,\ell'\ge 5$ distinct, we obtain respectively
\[
\sum_p\mu_{p,2}\mu_{p,3}\ll1,
\]
\[
\sum_p\mu_{p,2}\mu_{p,r}\ll 2^{-r}+3^{-r}+\sum_{p\ge5}p^{-r-2}\ll r^{-2},
\]
\[
\sum_p\mu_{p,3}\mu_{p,r}\ll 2^{-r}+3^{-r}+\sum_{p\ge5}p^{-r-3}\ll r^{-2},
\]
and
\[
\sum_p\mu_{p,\ell}\mu_{p,\ell'}
\ll
2^{-\ell-\ell'}+3^{-\ell-\ell'}+\sum_{p\ge5}p^{-\ell-\ell'}
\ll
(\ell\ell')^{-2}.
\]
After increasing the implied constant to cover the fixed pair $(2,3)$, this proves
part (3).

For $\ell=2$ and $\ell'=3$, direct evaluation of the absolutely convergent series
in part (1), using the explicit formulas of Theorem~\ref{thm:GOT-local} and the
tail bound $\sum_{p>Y}|C_p(2,3)|\ll\sum_{p>Y}p^{-5}$, gives
\[
\Cov(v_2(\Tam(E)),v_3(\Tam(E)))=-0.0248897\ldots,
\]
which is the quoted numerical value.
\end{proof}

\subsection{Proof of the correlation theorem}
We compute the covariance of $\omega_{\Tam}(E)$ and $\Omega(\Tam(E))$ from their joint limiting law, and the variance of $\Omega(\Tam(E))$ in the same way; the correlation is their quotient.

\begin{proof}[Proof of Theorem~\ref{thm:corr}]
Set $\Lambda=\varnothing$, $\Pi=\varnothing$, $s=0$ in Theorem~\ref{thm:master} while keeping $u$ and $w$ free. The resulting joint probability generating function of $(\omega_{\Tam}(E),\Omega(\Tam(E)))$ is
\[
G(u,w)
=
\prod_p g_p(u,w),
\qquad
g_p(u,w):=\delta_p(1)+u\sum_{t\ge 2}\delta_p(t)w^{\Omega(t)}.
\]
Each $g_p$ is a bivariate power series with nonnegative coefficients summing to $1$, so $|g_p|\le 1$ on the closed unit bidisc. To justify differentiation in a neighborhood of $(1,1)$, put
\[
a_p:=1-\delta_p(1),\qquad
T_p:=\sum_{t\ge2}\delta_p(t)t^3.
\]
The geometric tails in Theorem~\ref{thm:GOT-local} show that $T_p<\infty$ for every $p$, and Lemma~\ref{lem:local-estimates} gives $T_p\ll p^{-2}$ for $p\ge5$. On the bidisc $|u-1|,|w-1|\le1$, we have $|u|,|w|\le2$ and $2^{\Omega(t)}\le t$. Consequently,
\[
|g_p(u,w)-1|
\le a_p+2\sum_{t\ge2}\delta_p(t)t
\le3T_p.
\]
Moreover, since $\Omega(t)\le t$, the same majorant gives
\[
|\partial_u g_p|+|\partial_w g_p|
+|\partial_u\partial_w g_p|+|\partial_w^2g_p|
\ll T_p
\]
uniformly on this bidisc. The local series and their displayed derivatives converge uniformly there, and their sums over $p$ converge uniformly as well (using $g_p-1$ for the zeroth derivative). For all sufficiently large $p$, the bound $|g_p-1|\le3T_p$ is at most $1/2$ throughout the bidisc. For the remaining finitely many primes, shrink to a common neighborhood of $(1,1)$ on which $|g_p-1|<1/2$, using $g_p(1,1)=1$. The logarithms normalized by $\log g_p(1,1)=0$ and their first and second derivatives then form uniformly convergent series on a smaller closed bidisc. This justifies termwise logarithmic differentiation of $\log G(u,w)=\sum_p\log g_p(u,w)$ at $(1,1)$.

Using $g_p(1,1)=1$, $\partial_u g_p(1,1)=\sum_{t\ge 2}\delta_p(t)=1-\delta_p(1)$, $\partial_w g_p(1,1)=\sum_{t\ge 1}\delta_p(t)\Omega(t)$, and $\partial_u\partial_w g_p(1,1)=\sum_{t\ge 1}\delta_p(t)\Omega(t)$ (since $\Omega(1)=0$), the joint-p.g.f.~identity gives
\begin{align*}
\Cov(\omega_{\Tam}(E),\Omega(\Tam(E)))
&=
\sum_p\Bigl(\partial_u\partial_w g_p-\partial_u g_p\cdot\partial_w g_p\Bigr)_{(1,1)}\\
&=
\sum_p\Bigl(\sum_{t\ge 1}\delta_p(t)\Omega(t)-(1-\delta_p(1))\sum_{t\ge 1}\delta_p(t)\Omega(t)\Bigr)\\
&=
\sum_p\delta_p(1)\sum_{t\ge 1}\delta_p(t)\Omega(t),
\end{align*}
which is \eqref{eq:cov-omega-Omega}. Absolute convergence follows from Lemma~\ref{lem:local-estimates}, since $\sum_{t\ge 1}\delta_p(t)\Omega(t)\ll p^{-2}$ and $\delta_p(1)\le 1$.

The variance of $\omega_{\Tam}(E)$ is given by Theorem~\ref{thm:omegaE}. Applying the same logarithmic differentiation argument to $G(1,w)$ gives
\[
\Var(\Omega(\Tam(E)))
=
\sum_p\Bigl(\sum_{t\ge 1}\delta_p(t)\Omega(t)^2-\Bigl(\sum_{t\ge 1}\delta_p(t)\Omega(t)\Bigr)^{\!2}\Bigr).
\]
This series converges absolutely by Lemma~\ref{lem:local-estimates}. Both global variances are strictly positive: at $p=2$, the local outcomes $t=1$ and $t=2$ have positive probability and give different values of each local statistic. Dividing the covariance by the square root of the product of the variances proves \eqref{eq:corr-omega-Omega}. Truncating these three convergent sums at $p\le 10^8$ (cf.\ Table~\ref{tab:cutoff-stability}) yields the numerical value $\Corr\approx 0.9801$.
\end{proof}

\subsection{Proof of the moment and tail theorem}
The $k$th moment of the limiting Tamagawa law is the value of its Dirichlet series at $s=-k$. We express it as an Euler product, and the tail bound then follows from Markov's inequality.

\begin{proof}[Proof of Theorem~\ref{thm:tail}]
Fix $k\ge 0$, and write $G_{p,k}(\sigma):=\sum_{t\ge 1}\delta_p(t)t^{k-\sigma}$. The factors at $p=2,3$ are continuous and finite in $\sigma$ by the explicit geometric tails of Theorem~\ref{thm:GOT-local}, and for $p\ge 5$ the proof of \eqref{eq:moment-local-estimate} gives
\[
G_{p,k}(\sigma)=1+O_k(p^{-2})
\qquad (0\le \sigma\le 1),
\]
uniformly in $\sigma$, since $t^{k-\sigma}\le t^k$ for $t\ge 1$. Hence the Euler product $\prod_p G_{p,k}(\sigma)$ converges absolutely and uniformly for $0\le\sigma\le 1$. Applying \eqref{eq:GOT-Euler} at $s=-k+\sigma$ for $0<\sigma\le 1$ gives
\[
\sum_{m\ge 1}P_{\Tam}(m)m^{k-\sigma}
=
\prod_p G_{p,k}(\sigma).
\]
The same uniform convergence gives
\[
\lim_{\sigma\downarrow 0}\prod_p G_{p,k}(\sigma)
=
\prod_p G_{p,k}(0)
=
\prod_p\left(\sum_{t\ge 1}\delta_p(t)t^k\right).
\]
On the other hand, the coefficients $P_{\Tam}(m)m^k$ are nonnegative and $m^{-\sigma}\uparrow 1$ as $\sigma\downarrow 0$, so the monotone convergence theorem gives \cite[Chapter~1]{RudinRCA}
\[
\lim_{\sigma\downarrow 0}\sum_{m\ge 1}P_{\Tam}(m)m^{k-\sigma}
=
\sum_{m\ge 1}P_{\Tam}(m)m^k
=
M_k.
\]
Comparing the last two displays proves \eqref{eq:moment-product}.

For the tail bound, since $(P_{\Tam}(m))_{m\ge 1}$ is a probability distribution on the positive integers, we have
\[
\sum_{m\ge M}P_{\Tam}(m)
\le
\sum_{m\ge M}P_{\Tam}(m)\frac{m^k}{M^k}
\le
\frac{1}{M^k}\sum_{m\ge 1}P_{\Tam}(m)m^k
=
\frac{M_k}{M^k}.
\]
This is \eqref{eq:tail-bound}.
\end{proof}

\subsection{Proofs of the corollaries}

Here we prove the corollaries stated in the introduction.

\begin{proof}[Proof of Corollary~\ref{cor:first-probabilities}]
Expand the Euler product in \eqref{eq:omegaE-gf} and read off coefficients of powers of $u$.
\end{proof}

\begin{proof}[Proof of Corollary~\ref{cor:ell-adic-mean}]
Apply Theorem~\ref{thm:valuations} with $\Pi=\{\ell\}$, and write
\[
J_\ell(z):=\prod_p\left(\sum_{t\ge 1}\delta_p(t)z^{v_\ell(t)}\right).
\]
For $0<z\le 1$ we have
\[
\frac{d}{dz}\sum_{t\ge 1}\delta_p(t)z^{v_\ell(t)}
=
\sum_{t\ge 1}\delta_p(t)v_\ell(t)z^{v_\ell(t)-1},
\]
which extends continuously to $z=0$. By Lemma~\ref{lem:local-estimates}, we have
\[
\sup_{0\le z\le 1}\left|\frac{d}{dz}\sum_{t\ge 1}\delta_p(t)z^{v_\ell(t)}\right|
\le
\sum_{t\ge 1}\delta_p(t)v_\ell(t)
\ll p^{-2}
\qquad (p\ge 5).
\]
Hence the same argument used in the proof of Theorem~\ref{thm:Omega} shows that $J_\ell$ is differentiable at $z=1$ and that
\[
J_\ell'(1)=\sum_p\sum_{t\ge 1}\delta_p(t)v_\ell(t).
\]
Since $J_\ell(z)$ is the probability generating function for $v_\ell(\Tam(E))$, this derivative is exactly the desired expectation.
\end{proof}

\begin{proof}[Proof of Corollary~\ref{cor:joint-omega-tam}]
Take $\Lambda=\varnothing$, $\Pi=\varnothing$, and set $w=1$ in Theorem~\ref{thm:master}.
\end{proof}

\section{Appendix: AxiomProver}\label{sec:AxiomProver}\noindent

AxiomProver is an AI system for mathematical research through formal proof,
currently under development at Axiom Math. The theorems and lemmas in this paper have been autoformalized in Lean by AxiomProver, with 21 formal targets. The repository contains
the formal statements, proofs, and instructions for checking the certificate:
\begin{center}
\url{https://github.com/AxiomMath/BSDTamagawa}
\end{center}
The challenge file gives the definitions and hypotheses.
The solution proves the stated targets. The repository reports
successful verification with the Lean comparator.

\section*{Declaration of generative AI and AI-assisted technologies}

The original manuscript was written entirely by the human authors, who proved all of its theorems and lemmas. AxiomProver (Axiom Math) was used to autoformalize all of the theorems and lemmas in Lean, as described in Section~\ref{sec:AxiomProver} and in the repository linked there. ChatGPT was used for the final editing. The human authors retain full responsibility for the mathematical content, the correspondence between the manuscript and its formalization, and the accuracy and integrity of the final paper.

\end{document}